\title{Index of Parameters of Iterated Line Graphs}
\date{}
\begin{document}
\newtheorem{theorem}{Theorem}[section]
\newtheorem*{theoremnonum}{Theorem}
\newtheorem{definition}[theorem]{Definition}
\newtheorem{conjecture}[theorem]{Conjecture}
\newtheorem{proposition}[theorem]{Proposition}
\newtheorem{corollary}[theorem]{Corollary}
\newtheorem{lemma}[theorem]{Lemma}
\newtheorem{example}[theorem]{Example}
\newtheorem{remark}[theorem]{Remark}
\newtheorem{observation}[theorem]{Observation}
\newtheorem*{problem}{Problem}
\newtheorem{thmx}{Theorem}
\renewcommand{\thethmx}{\Alph{thmx}} 
\newcommand{\bal}{{\rm bal}}
\newcommand{\sbal}{{\rm sbal}}
\newcommand{\ot}{{\rm ot}}
\newcommand{\ex}{{\rm ex}}

\newcommand{\ind}{{\rm ind}}
\newcommand{\diam}{{\rm diam}}
\newcommand{\tot}{{\rm tot}}
\newcommand{\half}{{\rm Half}}

\newcommand{\ad}[1]{\textcolor{blue}{#1}}

\DeclareGraphicsExtensions{.pdf,.png,.jpg}

\author{Yair Caro \\ Department of Mathematics\\ University of Haifa-Oranim \\ Israel \and \ Josef  Lauri\\ Department of Mathematics \\ University of Malta \\ Malta \and Christina Zarb \\Department of Mathematics \\University of Malta \\Malta  }

\maketitle

\begin{abstract} \fontsize{11}{12}\selectfont

Let $G$ be a prolific graph, by which we mean a finite connected simple graph which is not isomorphic to a cycle nor a path nor the star graph $K_{1,3}$. The line-graph of $G$, denoted by $L(G)$, is defined by having its vertex-set equal to the edge-set of $G$ and two vertices of $L(G)$ are adjacent if the corresponding edges are adjacent in $G$. For a positive integer $k$, the iterated line-graph $L^k(G)$ is defined recursively by $L^k(G)=L(L^{k-1}(G))$.

In this paper we shall consider fifteen well-known graph parameters and study their behaviour when the operation of taking the line-graph is iterated. We shall first show that all of these parameters are unbounded, that is, if $P=P(G)$ is such a parameter defined on any prolific graph $G$, then $P(L^k(G)) \rightarrow \infty$ when $k \rightarrow \infty$. This idea of unboundedness is motivated by a well-known old result of van Rooij and Wilf that says that the number of vertices is unbounded if and only if the graph is prolific. 

Following this preliminary result, the main thrust of the paper will be the study of the value of  $k(P,{\mathcal F})$, which is the index of a family of prolific graphs with regards to a given graph parameter $P(G)$.  For a given parameter $P(G)$,  the index of $G$  is denoted by $\ind(P,G) = \min \{ r : P(G) < P(L^r(G) \}$.

Now for a family $\mathcal F$ of prolific graphs, the index of the family is $k(P,\mathcal{F}) = \max \{ \ind(P,G) :  G \in \mathcal F\}$, that is $k(P,F)$ is the smallest integer $k$ such that for every prolific graphs $G \in \mathcal F$, $\ind(P,G) \leq  k(P,\mathcal F)$.

The problem of determining the index of a parameter over the family of prolific graphs  is motivated by a  classical result of Chartrand who showed that it could require $k=|V(G)|-3$ iterations to guarantee that $L^k(G)$ has a hamiltonian cycle. 

For twelve of the fifteen parameters considered, we exactly determine $k(P,\mathcal F)$  where $\mathcal F$ is the family of all prolific graphs, and for some parameters we also characterize the class of prolific graphs realizing the extremal value $k(P,\mathcal F$).

For example, for the matching number $\mu$, we show that the index of every prolific graph is at most 4 which is sharp, namely $k(\mu,\mathcal F) = 4$ and we further characterize those graphs for which $\ind(\mu,G) = 4$.   

Interesting open problems remain, in particular completing the determination of $k(P,\mathcal F)$ for the three parameters:  the independence number, independent domination number and domination number, where we obtain partial results.

\end{abstract}

\section{Introduction}

The taking of the line-graph $L(G)$ of a graph $G$ is perhaps the most widely studied graph operation. This is probably because it is such a natural operation on graphs: a graph tells us which pairs of vertices are related by letting them be in the same 2-subset, and the line-graph takes this one step further by telling us which two subsets are related by containing a common vertex. The two fundamental theorems of line-graphs are arguably the characterisations by Krausz \cite{krausz1943demonstration} and Beineke \cite{beineke1970characterizations}. The latter's result is one of the most well-known characterisations of a class of graphs in terms of ``forbidden subgraph''. 

In this paper we shall study the effect of taking the line graph on fifteen well-known parameters. Our interest is in studying their behaviour when the operation of taking the line-graph is iterated. We denote by $L^k(G)$   the $k^{th}$ iterated line graph of $G$. In particular $L(G) = L^1(G),  L^2(G) = L(L(G))$ etc.  We consider prolific graphs --- a connected graph $G$ is called prolific  if $G$ is none of $P_k$, the path on $k$ vertices, $C_k$, the cycle on $k$ vertices or $K_{1,3}$.
Van Rooij and Wilf \cite{van1965interchange} consider the sequence $L^k(G)$ of iterated line graphs and show that, when $G$ is a finite connected graph, only four behaviours are possible for this sequence:
\begin{itemize}
	\item{if $G$ is a cycle graph then $L(G)$ and each subsequent graph in this sequence are isomorphic to $G$ itself. These are the only connected graphs for which $L(G)$  is isomorphic to $G$.}
	\item{if $G$ is a claw $K_{1,3}$, then $L(G)$ and all subsequent graphs in the sequence are triangles.}
	\item{if $G$ is a path graph then each subsequent graph in the sequence is a shorter path until eventually the sequence terminates with an empty graph.}
	\item{in all remaining cases, the sizes of the graphs in this sequence eventually increase without bound.}
\end{itemize}

The notion of boundedness of parameters follows naturally from the work of van Rooij and Wilf whose ideas were, thirty years later, also extended by Chartrand et al. \cite{chartrandHindex} to a modified form of the iterated line graph which they called the iterated $H$-line graph.

Formally, we define unbounded parameters as follows:
		\begin{itemize}
			\item{Let $G$ be a prolific graph.  A graph parameter $P(G)$  is \emph{unbounded for $G$}  if  $P(L^k(G)) \rightarrow \infty$  as $k \rightarrow \infty$. (Clearly $P(G)$ is unbounded for $G$ if and only if it is unbounded for $P(L^k(G))$  for $k \geq 0$.) }  
			\item{Let $\mathcal F$ be a family  of prolific graphs. A graph parameter $P(G)$  is \emph{unbounded on $\mathcal F$}  if  $P(L^k(G)) \rightarrow \infty$ as $k \rightarrow \infty$   for all members $G$ of $F$.}
			\item{A graph parameter is called \emph{unbounded} if it is unbounded on the family of all prolific graphs.}
		\end{itemize}

We first show that all of the fifteen parameters which we consider are unbounded. This is the result which one would intuitively expect, since 
$|V(L^k(G))|$ is unbounded. Yet, in some cases, we shall see that it does require some work to show unboundedness.

Following this, our attention then focuses not on the asymptotic behaviour of these parameters as the operation of taking the line-graph is iterated, but rather on their behaviour in the initial stages of this iterating process.  Formally, for a given parameter $P(G)$, the index of $G$  is denoted by $\ind(P,G) = \min \{ r : P(G) < P(L^r(G) \}$.

Now for a family $\mathcal F$ of prolific graphs, the index of the family is $k(P,\mathcal F) = \max \{ \ind(P,G) :  G \in \mathcal F\}$, that is $k(P,F)$ is the smallest integer $k$ such that for every prolific graphs $G \in \mathcal F$, $\ind(P,G) \leq k(P,\mathcal F)$.
In the case where this maximum over a family $\mathcal F$ or over all prolific graphs,  is finite, $P$ is called \emph{universal  over $\mathcal F$} or just \emph{universal} in case it is over all prolific graphs. Otherwise $P$ is non-universal and  we write $k(P,{\mathcal F}) =  \infty$, $k(P) = \infty$  respectively.

The motivation for this definition can best be described using Chartrand's result on the Hamiltonicity \cite{chartrand1973hamiltonian} of line-graphs. Chartrand showed that, for minimum degree $\delta$ at least 3,  there are non-Hamiltonian graphs $G$ such that $L^2(G)$ is still not Hamiltonian, but that $L^3(G)$ is hamiltonian for every such graph. Therefore if $P(G)$ denotes the number of hamiltonian cycles of $G$, then the index $k(P,\mathcal F)$ of $P$ over $\mathcal F$, the family of  all graphs with $\delta\geq3$ is 3. But for general prolific graphs Chartrand showed that it could require $n-3$ iterations to guarantee a Hamiltonian graph,and this is sharp, therefore the index  $k(P,\mathcal F)$  of $P$ over all prolific graphs is $\infty$.

Our aim is to find, for each of the fifteen parameters under consideration, their indices for as wide a family of prolific graphs as possible, and to characterise those extremal graphs which attain this value of the index $k(P,\mathcal F)$, that is, those $G$ with the property that $\ind(P,G)=k(P,\mathcal F)$.

 The notion of the index of a parameter was formally introduced for connectivity by Chartrand and Stewart in \cite{chartrand1969connectivity} but in the past fifty years it was studied by numerous authors for hamiltonicity \cite{chartrand1973hamiltonian}, matching number \cite{sumner1974graphs}, maximum and minimum degree \cite{hartke1999maximum},
connectivity \cite{KNOR2003255,SHAO20183441},  linkability \cite{bohme2006linkability}, maximally ordered graphs \cite{knor2006iterated}, independence number \cite{knor2006distance}, the independent domination number \cite{ALLAN197873}, more recently, the 1-crossing number \cite{wang}, to mention only a few. Other investigators also showed that there is interest in studying not only in the numerical behaviour of parameters for iterated line graphs but also qualitative properties such as planarity \cite{geblehplanarity} or, even more recently, generalized outerplanarity \cite{barati}.

Surprisingly, perhaps, the study of iterated line graphs has also found its way in the vast literature of the applications of graph theory to chemistry. For example, we can cite two very recent papers on the iterated line graphs and chemistry: in \cite{knorchem} the authors study the Wiener index of iterated line graphs, and in \cite{gutmanchem} the authors investigate the possible link between what are called the Bertz indices of the sequence of iterated line graphs and the study of quantitative structure-properties of molecules. It seems that the history of the relationship between line graphs and chemistry goes back a long time to a paper by Lennard-Jones and Hall in 1952!

However, in spite of all this interest, we are unaware of the existence of a comprehensive survey on  families of iterated line-graphs which collects the various results obtained in the last sixty years on both unboundedness of parameters and of their index. Such results are found in dozens of journals, so we have tried to collect those related to the fifteen parameters under consideration in this paper.
This paper therefore serves partly as a survey on this topic by collecting and presenting systematically results which are scattered in the literature, and partly as a presentation of new results in the hope that others might be interested in studying this aspect of iterated line graphs.  Throughout the paper, we follow general graph theory notation as in \cite{west2001introduction}.

\medskip\noindent
The parameters of prolific graphs which we shall consider are the following: \begin{enumerate}
	\item{$n(G)$,  the number of vertices of $G$}
	\item{$e(G)$,  the number of edges of $G$}
	\item{$\Delta(G)$, the maximum degree of $G$}                
	\item{$\delta(G)$, the minimum degree of $G$}                 
	\item{$d(G)$, the average degree of $G$}
	\item{$c(G)$, the longest cycle in $G$}                  
	\item{$\mu(G)$,  the matching number of $G$}
	\item{$\chi(G)$, the chromatic number of $G$}
	\item{$\chi '(G)$, the chromatic index of $G$}               
	\item{$\omega(G)$, the clique number of $G$}
	\item{$\lambda(G)$, the edge connectivity of $G$}   
	\item{$\kappa(G)$, the vertex connectivity of $G$}
	\item{$\alpha(G)$, the independence number of $G$}
	\item{$i(G)$,  the independent domination of $G$}   
	\item{$\gamma(G)$, the domination number of $G$}
\end{enumerate}

In order to give the reader a taste of what will be covered, we finish this introduction by giving a table which summarises the main results which we present in this paper. In this table, the class of all  prolific graphs is denoted by $\mathcal F$ and an asterisk by the name of a theorem means that part or all of the characterisation of the extremal graphs is still open. 


\bigskip
\begin{table}[h!]
	\caption{The fifteen unbounded parameters and their indices}
\begin{tabular}{|l|p{2.8cm}|p{1.5cm}|p{2.5cm}|}
	\hline
	Parameter   & Family of graphs & $k(P,\mathcal F)$& Theorem\\
	\hline
	Number of vertices $n(G)$  & $\mathcal F$ &  $4$  & Theorem B\\
	Number of edges $e(G)$     & $\mathcal F$ &  $2$  & Theorem A\\
	Maximum degree $\Delta(G)$ & $\mathcal F$ & $3$ & Theorem C\\
	Minimum degree $\delta(G)$  & ${\mathcal F}, \delta \leq 2$& $\infty$& Theorem D\\
	       & ${\mathcal F}, \delta\geq3$& $1$& Theorem D\\
	Average degree $d(G)$ &$\mathcal F$& 1 & Theorem E\\
    Longest cycle $c(G))$ & $\mathcal F$ & 1 & Theorem F\\
    Matching number $\mu(G)$ & $\mathcal F$ &4 & Theorem G\\
    Chromatic number $\chi(G)$ & $\mathcal F$ & 3 & Theorem H*\\
    Chromatic index $\chi'(G)$ & $\mathcal F$ & 3 & Theorem I\\
    Clique number $\omega(G)$& $\mathcal F$ & 3 & Theorem J\\
Edge connectivity $\lambda(G)$  & ${\mathcal F}, \delta \leq 2$& $\infty$& Theorem K\\
                                    & ${\mathcal F}, \delta\geq3$& $1$& Theorem K\\
    Vertex connectivity $\kappa(G)$ & ${\mathcal F}, \delta \leq 2$& $\infty$ & Theorem L\\
                                    & ${\mathcal F}, \delta\geq3$& $2$ & Theorem L*\\
        Independence number $\alpha(G)$ & ${\mathcal F}, d\geq4$& $\leq 2$& Theorem M*\\
							& ${\mathcal F},\delta\geq 3$& $\leq 2$& Theorem M*\\
							& ${\mathcal F},d\geq 3$& $\leq 3$& Theorem M*\\
                                    & ${\mathcal F}, \delta=2$& $\leq 3$& Theorem M*\\
    Independent domination number $i(G)$   & Open & &\\
    Domination number $\gamma(G)$ & ${\mathcal F}, \delta\geq 4$& $\leq 2$& Theorem N*\\ 
							& ${\mathcal F}, \delta=3$& $\leq 3$ & Theorem N*\\ 
							& ${\mathcal F}, d \geq 3 $& $ \leq 3$ & Theorem N*\\

	\hline
\end{tabular}	               -                                         -
\end{table}

 As regards notation,  we shall use $P(L^k(G)) = P_k(G)$ when no ambiguity is involved . For example, $\alpha_k(G) = \alpha(L^k(G))$ is the independence number of $L^k(G)$, the $k$-th iteration of $L(G)$.   Also, we use $x_j$ to denote the number of vertices of degree $j$ in a graph $G$.

\section{Unbounded Parameters}

We first state the following results which will be used in the main proof.

\begin{theorem}[Sumner  \cite{sumner1974graphs}] \label{matching}
Let $G$ be a connected $K_{1,3}$-free graph on $n$ vertices.  Then $\mu(G) = \lfloor \frac{n}{2} \rfloor$.
\end{theorem}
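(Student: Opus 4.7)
\medskip\noindent\textbf{Proof plan.}
The plan is to first reduce to the even case: when $n$ is odd, choose any leaf $\ell$ of a spanning tree of $G$, so that $G-\ell$ is connected (the tree minus a leaf remains connected), $K_{1,3}$-free (as an induced subgraph), and of even order $n-1$; a perfect matching of $G-\ell$ is a matching of $G$ of size $\lfloor n/2\rfloor$. It therefore suffices to prove that every connected claw-free graph on an even number of vertices has a perfect matching.

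Assume now $n$ is even, and suppose for contradiction that some maximum matching $M$ is not perfect. Over all maximum matchings $M'$ and all pairs $\{u,v\}$ of distinct $M'$-unsaturated vertices, I would choose one that minimises $d_G(u,v)$; such a choice exists because $G$ is connected. When $d(u,v)=1$, the matching $M'+uv$ is strictly larger. When $d(u,v)=2$ with common neighbour $w$, the vertex $w$ must be $M'$-matched (say to $w'$), and $u,v,w'$ are three distinct neighbours of $w$ with $uv\notin E$; claw-freeness at $w$ therefore forces (WLOG) $uw'\in E$, so $(M'\setminus\{ww'\})\cup\{uw',wv\}$ is a strictly larger matching. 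Both cases contradict maximality.

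The substantive case is $d(u,v)=k\geq 3$. On a shortest $u$-$v$ path $u,x_1,x_2,\dots,v$, maximality forces $x_1$ to be matched, say to $y$, and shortness gives $ux_2\notin E$. Applying claw-freeness at $x_1$ to the three neighbours $u,x_2,y$ (after a direct augmenting-path argument in the degenerate subcase $y=x_2$) yields either $uy\in E$ or $x_2y\in E$. In the first case, the swap $(M'\setminus\{x_1y\})\cup\{uy\}$ is a new maximum matching in which the unsaturated pair $\{x_1,v\}$ has distance $\leq k-1$, contradicting minimality. In the second case, $x_2$ must itself be matched (else $\{u,x_2\}$ is a closer unsaturated pair); one then iterates the claw-free argument at $x_2$ on its neighbours $x_1,x_3$ and its match $z$, either producing a local swap that strictly decreases the distance between some unsaturated pair or exhibiting a longer alternating walk along the path.

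I expect the main obstacle to lie precisely in this iteration in the case $d(u,v)\geq 3$: the claw-free condition may keep returning a ``forward'' chord rather than a ``back'' chord to $u$, so one must chain local swaps along the shortest path, at each step verifying that the new edge set is still a matching of the same cardinality, that the intermediate vertex is $M'$-saturated (else a closer unsaturated pair already exists), and that the claw-free triple to which we appeal is genuinely a set of three distinct neighbours. Keeping track of when this chain finally produces either a genuine augmenting alternating walk or a strictly closer unsaturated pair, for all values of $k\geq 3$ simultaneously, is the most technical part of the argument.
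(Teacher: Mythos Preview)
The paper does not prove this statement; it is quoted as a known result of Sumner and used as a tool. So there is no ``paper's proof'' to compare against, and I will simply evaluate your argument.

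Your reduction to even $n$ and your handling of the cases $d(u,v)\le 2$ are correct and are essentially Sumner's original argument. The part you flag as the ``main obstacle'' for $k=d(u,v)\ge 3$ is not actually an obstacle: no iteration is needed. With your notation (shortest path $u=x_0,x_1,\dots,x_k=v$, $x_1$ matched to $y$, and $u,x_2,y$ distinct neighbours of $x_1$ with $ux_2\notin E$), claw-freeness at $x_1$ gives $uy\in E$ or $x_2y\in E$. In the first case your swap $(M'\setminus\{x_1y\})\cup\{uy\}$ leaves $x_1$ unsaturated at distance $k-1$ from $v$, contradicting minimality. In the second case, do the \emph{other} natural swap $M''=(M'\setminus\{x_1y\})\cup\{ux_1\}$: this is again a maximum matching, $y$ is now unsaturated, and since $yx_2\in E$ we get $d(y,v)\le 1+d(x_2,v)=k-1$, again contradicting minimality. (The degenerate subcase $y=x_2$ is the same swap with $d(x_2,v)=k-2$.) So the whole $k\ge 3$ case collapses to a single swap in each branch, and the proof is complete without the chaining you were worried about.
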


\begin{corollary} \label{ind_match}
If $G$ is a connected graph having $m$ edges then $\alpha_2(G)  =  \mu_1(G) =   \lfloor \frac{m}{2} \rfloor$.
\end{corollary}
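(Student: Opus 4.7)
The plan is to chain together two standard facts: the well-known correspondence between independent sets in a line graph and matchings in the original graph, and Sumner's theorem as stated.

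First I would observe that for any graph $H$, an independent set in $L(H)$ corresponds bijectively to a matching in $H$, and vice versa, since two vertices of $L(H)$ are nonadjacent exactly when the corresponding edges of $H$ share no endpoint. Consequently $\alpha(L(H)) = \mu(H)$. Applying this with $H = L(G)$ yields
\[
\alpha_2(G) \;=\; \alpha(L(L(G))) \;=\; \mu(L(G)) \;=\; \mu_1(G),
\]
which gives the first equality immediately, and reduces the problem to computing $\mu(L(G))$.

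Next I would verify that Sumner's theorem (Theorem \ref{matching}) applies to $L(G)$. It is standard that line graphs are $K_{1,3}$-free (this is the ``claw-free'' part of Beineke's characterization cited in the introduction), and $L(G)$ is connected whenever $G$ is connected and has at least one edge. Since $L(G)$ has exactly $m$ vertices, Sumner's theorem gives $\mu(L(G)) = \lfloor m/2 \rfloor$, completing the second equality.

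There is no real obstacle here: the argument is just the composition of two known results. The only mild care needed is the boundary case $m=1$, where $L(G)$ is a single vertex and the formula correctly returns $0$; and the observation that nothing in the statement requires $G$ to be prolific, so no further case distinction is needed.
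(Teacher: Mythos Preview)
Your argument is correct and is exactly the approach the paper takes: it notes that the corollary is immediate from Sumner's theorem together with the facts that $n_1(G)=e(G)$ and $\mu_1(G)=\alpha_2(G)$, precisely the two ingredients you combine.
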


This is found in \cite{knor2012independence}  and is obtained using an old result of Kotzig from 1957 in \cite{kotzig1957theory}.  An alternative proof is found in \cite{caro1980decompositions}.  However it is immediate from Sumner's theorem and the facts that  $n_1(G)= e(G)$  and $\mu_1(G) = \alpha_2(G)$.

 We now state the main theorem for this section.

\begin{theorem}
All the fifteen parameters listed above are unbounded.
\end{theorem}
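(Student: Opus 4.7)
The plan is to establish unboundedness for each of the fifteen parameters in turn, anchored on three ingredients available from the paper: the van Rooij--Wilf dichotomy, Sumner's Theorem~\ref{matching}, and Corollary~\ref{ind_match}. A preliminary step I would carry out is to verify that prolificness is preserved under $L$: since any line graph is $K_{1,3}$-free, and since $L(H)$ is a path (respectively, cycle) only when $H$ is, every iterate $L^k(G)$ remains prolific and in particular contains a vertex of degree at least $3$.

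First I would dispose of the counting parameters. The number of vertices $n_k$ tends to infinity by van Rooij--Wilf; $e_k = n_{k+1}$ therefore does too. Applying Corollary~\ref{ind_match} to $L^{k-2}(G)$ gives $\alpha_k(G) = \mu_{k-1}(G) = \lfloor e_{k-2}(G)/2 \rfloor$ for all $k \geq 2$, simultaneously handling the matching number and the independence number.

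Next, the clique-type cluster. The $d$ edges at any vertex of degree $d$ in $H$ induce a $K_d$ in $L(H)$, so $\omega(L(H)) \geq \Delta(H)$, $\chi(L(H)) \geq \omega(L(H)) \geq \Delta(H)$, and $\chi'(H) = \chi(L(H)) \geq \Delta(H)$. Hence once $\Delta_k \to \infty$ is proved, all of $\Delta$, $\omega$, $\chi$, $\chi'$ follow. For $\Delta_k \to \infty$ I would combine the identity $\Delta(L(H)) = \max_{uv \in E(H)}(d_H(u) + d_H(v) - 2)$ with the subgraph inclusion $L(K_{\Delta_k}) \subseteq L^{k+2}(G)$, yielding $\Delta_{k+2} \geq 2\Delta_k - 4$; once $\Delta_k \geq 5$ this is a strict doubling, and the low-degree regime $\Delta_k \in \{3,4\}$ is dispatched by a short case analysis that uses prolificness to force the eventual appearance of two adjacent vertices of maximum degree.

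The remaining parameters I would split by whether the minimum degree plays a role. For the average degree, a Cauchy--Schwarz computation on $|E(L(H))| = \tfrac12 \sum d_v^2 - |E(H)|$ gives $d(L(H)) \geq 2 d(H) - 2$, which iterates to infinity whenever $d(H) > 2$; the residual case $d_0 \leq 2$ is handled by noting that $L(H)$ always contains a triangle (from any degree-$\geq 3$ vertex of $H$) and hence has cyclomatic number at least $2$ after a bounded number of steps. For the longest cycle, Chartrand's Hamiltonicity theorem guarantees $L^k(G)$ is eventually Hamiltonian, so $c_k = n_k \to \infty$. For $\delta$, $\lambda$, $\kappa$, $i$, and $\gamma$, the main obstacle is the transient low-$\delta$ phase: pendants and degree-$2$ vertices migrate inward along pendant paths and along maximal degree-$2$ paths, disappearing after boundedly many iterations; once $\delta_k \geq 3$, the bound $\delta(L(H)) \geq 2\delta(H) - 2$ pushes $\delta$, $\lambda$, and $\kappa$ to infinity, while the standard bound $\gamma, i \geq n/(\Delta+1)$ combined with the fast growth of $n_k$ relative to the at-most-exponential growth of $\Delta_k$ closes out $\gamma$ and $i$.
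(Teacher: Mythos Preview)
Your overall architecture matches the paper's: anchor on van Rooij--Wilf for $n$ and $e$, derive $\mu$ and $\alpha$ from Corollary~\ref{ind_match}, reduce $\omega,\chi,\chi'$ to $\Delta$, and invoke Chartrand for $c$. Your treatments of $\Delta$, $d$, and $\delta$ are more self-contained than the paper's (which simply cites \cite{hartke1999maximum,hartke2003minimum}), and your route to $\gamma$ and $i$ via the bound $\gamma,i\geq n/(\Delta+1)$ together with the super-exponential growth of $n_k$ against the at-most-exponential growth of $\Delta_k$ is a legitimate alternative to the paper's cleaner chain $\gamma_{k+1}=i_{k+1}=\mu^*_k\geq \mu_k/2$, which reduces everything to item~7 in one stroke via the Allan--Laskar identity $i=\gamma$ for claw-free graphs and the elementary inequality $\mu\leq 2\mu^*$.

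There is, however, a genuine gap in your handling of $\lambda$ and $\kappa$. You write that once $\delta_k\geq 3$, the recursion $\delta(L(H))\geq 2\delta(H)-2$ ``pushes $\delta$, $\lambda$, and $\kappa$ to infinity''. But Whitney's inequality $\kappa\leq\lambda\leq\delta$ points the wrong way: knowing $\delta_k\to\infty$ gives only upper bounds on $\lambda_k,\kappa_k$, not lower bounds. A graph can have $\delta$ arbitrarily large while $\lambda=1$ (two cliques joined by a bridge), and one iteration of $L$ does not by itself repair this without further argument. What is actually needed is a separate recursion such as $\lambda_1\geq 2\lambda-2$ (Chartrand--Stewart) or the stronger fact that $\lambda_k=\delta_k$ and $\kappa_k=\delta_k$ for all sufficiently large $k$; the paper invokes exactly these results from \cite{KNOR2003255,SHAO20183441}. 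You need either to supply such an inequality for $\lambda$ and $\kappa$ directly, or to cite the eventual equality with $\delta_k$ --- the $\delta$-recursion alone does not close the argument.
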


\begin{proof}
 $\mbox{ }$
\begin{enumerate}

\item{For $n(G)$  the number of vertices of $G$, as already mentioned, it is known from \cite{van1965interchange} that $n(G)$ is unbounded  if $G$ is prolific. }
\item{For $e(G)$, the number of edges of $G$, the result follows directly from case 1, since the line graph of a prolific graph $G$ is  connected therefore $e_k\geq n_k -1$ for $k\geq 1$.}
\item{For $\Delta(G)$, maximum degree of $G$, the result can be deduced from the  theorem proved in \cite{hartke1999maximum}, which states that, for a prolific graph $G$,  there exists a constant $c(G)$ such that for $k \geq c(G)$,   $\Delta_{k+1} = 2\Delta_k - 2$,  and since $G$ is prolific $\Delta(G) \geq 3$ and hence $\Delta_k \rightarrow \infty$ when $k \rightarrow \infty$.}
\item{For $\delta(G)$, the minimum degree of $G$, this is proved explicitly in \cite{hartke2003minimum}, where it is shown that  for a prolific graph $G$,  there exists a constant $c(G)$ such that for $k \geq c(G)$, $\delta_{k} \geq 3$ and  $\delta_{k+1}= 2\delta_k - 2$.}
\item{For $d(G)$, the average degree of $G$, we know that $\Delta(G) \geq d(G)  \geq \delta(G)$ and due to item 4, we infer that $d(G)$ is unbounded.}
\item{For $c(G)$, the longest cycle in $G$, the result follows from a theorem of Chartrand proved in \cite{chartrand1968hamiltonian}, which states that for a prolific graph $G$ on $n$ vertices, $L^{n-3}(G)$  is Hamiltonian, and this fact together with item 1 prove that $c(G)$ is unbounded.

It is worth noting that if $\delta(G) \geq 3$,  then already $L^2(G)$ is Hamiltonian as proved in  \cite{chartrand1973hamiltonian}.}
\item{For $\mu(G)$, the matching number of $G$, we use item 2 and Theorem \ref{matching}, and also item 6, since if $G$ is Hamiltonian then  clearly $\mu(G)  =\lfloor \frac{n}{2} \rfloor$.}        
\item{For $\chi(G)$,  the chromatic number of $G$, it is clear that $\chi'(G) = \chi_1(G)$ hence by Vizing's theorem $\chi_{k+1}(G)   = \chi'_k(G) \geq \Delta_k(G)$, and the result follows from item 3.   }
\item{For $\chi'(G)$, the chromatic index of $G$, again by Vizing's theorem   $\chi'_k(G) \geq \Delta_k(G)$  and the result follows from item 3.}
\item{For $\omega(G)$, the clique number of $G$, clearly $\omega(L(G))=\Delta(G)$ and hence $\omega_{k+1}(G)  = \Delta_k(G)$ and the result follows from item 3.  }
\item{For $\lambda(G)$, the edge connectivity of $G$, there are results in \cite{KNOR2003255,SHAO20183441} which prove that for a prolific graph $G$,  there exists a constant $c(G)$  such that for $k \geq c(G)$, $\lambda_k(G) = \delta_k(G)$ and  the result follows from item 4.}
\item{ For $\kappa(G)$, the vertex connectivity of $G$, there are results in \cite{KNOR2003255,SHAO20183441} which proves that for a prolific graph $G$,  there exists a constant $c(G)$  such that for $k \geq c(G)$, $\kappa_k(G) = \delta_k(G)$, and  the result follows from item 4.}

\item{For $\alpha(G)$, the independence number of $G$, the result follows from Corollary  \ref{ind_match} and item 2.  A proof with details on the growth of $\alpha_k$ is given in \cite{knor2006distance}.}
\item{For $i(G)$, the independent domination number of $G$ we use a theorem by Allan-Laskar \cite{ALLAN197873}, which states that, for a $K_{1,3}$-free graph $G$ (hence for line graphs and iterations of line graphs), $i(G) = \gamma(G)$, and the proof of item 15.
}
\item{For $\gamma(G)$, the domination number of $G$, we observe the following.  Define $\mu^*(G)$ to be the minimum cardinality of a maximal matching in $G$.  Then
\begin{enumerate}
\item{it is well  known (Lemma 1 in   \cite{BIEDL20047})   that  $\mu^* \leq \mu \leq 2\mu^*$, hence $\mu^* \geq \frac{\mu}{2}$ and since by item 7 $\mu$ is unbounded, so is $\mu^*$.}
\item{it is clear that $i(L(G)) = \mu^*(G)$ \cite{van2020minimum}.  Hence $\gamma_{k+1} =  i_{k+1} = \mu^*_k$  so all parameters are unbounded since $\mu^*$ is unbounded. }
\end{enumerate}}
\end{enumerate}        

\phantom\qedhere
\end{proof}

In order to present the main  contribution of this paper, namely computing the  indices of the various parameters presented in section 2, we need several preparatory results, which are collected in the following section.

\section{Preparatory Tools}

We first consider some results and facts which we will use in our proofs.  These results mainly involve convexity and the well-known Jensen inequality \cite{jensen1906fonctions}.
\begin{theorem}[Jensen]
If $f$ is a real continuous function that is convex, then
\[f \left (\frac{\sum_{i=1}^n x_i}{n} \right ) \leq \frac{\sum_{i=1}^n f(x_i)}{n}.\]   Equality holds if and only if   $ x_1=x_2=\ldots=x_n$  or  if $f$ is a linear function on a domain containing $x_1, x_2,\ldots,x_n$.
\end{theorem}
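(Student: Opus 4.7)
The plan is to establish Jensen's inequality via a supporting-line argument at the arithmetic mean. Setting $\bar{x} = \frac{1}{n}\sum_{i=1}^{n} x_i$, I would invoke the standard fact that a continuous convex function on an interval admits one-sided derivatives at every interior point; any $c$ between the left and right derivatives at $\bar{x}$ determines an affine function $L(t) = f(\bar{x}) + c(t - \bar{x})$ satisfying $L(t) \leq f(t)$ throughout the domain and $L(\bar{x}) = f(\bar{x})$. Summing the pointwise inequality $f(x_i) \geq L(x_i)$ over $i$ and using $\sum_{i=1}^{n}(x_i - \bar{x}) = 0$ immediately yields $\sum_i f(x_i) \geq n\,f(\bar{x})$, which is exactly Jensen's inequality after dividing by $n$.

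For the equality characterisation, the two sufficient conditions are straightforward: if all $x_i$ coincide, both sides collapse to $f(x_1)$; and if $f$ is affine on a domain containing the $x_i$, then averaging commutes with $f$. For the converse, assume equality holds. Then $\sum_i (f(x_i) - L(x_i)) = 0$ with nonnegative summands, which forces $f(x_i) = L(x_i)$ for every $i$. If two of the $x_i$ are distinct, say $x_j < x_k$, then for any $t \in [x_j, x_k]$ write $t = \lambda x_j + (1-\lambda) x_k$; convexity gives $f(t) \leq \lambda f(x_j) + (1-\lambda) f(x_k) = L(t)$, while $f \geq L$ everywhere gives the reverse inequality. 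Thus $f = L$ on $[\min_i x_i, \max_i x_i]$, i.e.\ $f$ is linear on a domain containing the $x_i$.

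An alternative is induction on $n$: the base case $n=2$ is the two-point convexity inequality with weights $1/2$ and $1/2$, while the inductive step comes from writing $\bar{x}_n = \frac{n-1}{n}\bar{x}_{n-1} + \frac{1}{n}x_n$ and combining the two-point convexity inequality with the induction hypothesis applied to $x_1,\ldots,x_{n-1}$. The main obstacle I anticipate is the equality analysis under the mild hypothesis of continuous (rather than strict) convexity: one must argue that tightness at two distinct sample points propagates to affinity on the entire sub-interval spanned by the data, and the supporting-line route above handles this cleanly by pinpointing exactly where the underlying two-point convexity inequality becomes an equality.
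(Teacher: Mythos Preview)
Your supporting-line argument is correct and is one of the standard textbook proofs of Jensen's inequality; the equality analysis via $\sum_i(f(x_i)-L(x_i))=0$ forcing $f=L$ on $[\min_i x_i,\max_i x_i]$ is also clean and complete.

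However, there is nothing to compare against here: the paper does not prove this theorem. It is stated in Section~3 purely as a preparatory tool, with a citation to Jensen's original 1906 paper, and then immediately applied (to the function $\binom{x}{2}$) in the proofs of Theorems~3.2--3.4. So you have supplied a proof where the authors simply quoted a classical result; your argument is sound, but it goes beyond what the paper itself does.
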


The first known  and important result (unpublished \cite{federlimit}) which we shall use in further proofs concerns the average degree.
\begin{theorem} \label{avgdeg}
For a graph  $G$ we have $d_1(G)  \geq 2(d(G) - 1)$ and equality holds  if and only if $G$ is  regular, where d(G)   is the average degree of $G$.

 \end{theorem}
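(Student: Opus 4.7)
The plan is to compute $d_1(G) = d(L(G))$ explicitly in terms of the degree sequence of $G$ and then reduce the inequality to a standard convexity statement.

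First I would set up notation: let $G$ have $n$ vertices of degrees $d_1,\dots,d_n$ and $m$ edges, so $d(G) = 2m/n$. The line graph $L(G)$ has exactly $m$ vertices, and the vertex of $L(G)$ corresponding to an edge $uv \in E(G)$ has degree $d_u + d_v - 2$. Summing over all edges of $G$ and using the well-known identity $\sum_{uv \in E(G)}(d_u + d_v) = \sum_v d_v^2$, I get
\[
2\,|E(L(G))| \;=\; \sum_{uv \in E(G)}(d_u + d_v - 2) \;=\; \sum_{v \in V(G)} d_v^2 \;-\; 2m,
\]
and dividing by $|V(L(G))| = m$ yields $d_1(G) = \bigl(\sum_v d_v^2 - 2m\bigr)/m$.

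Next I would rewrite the desired inequality $d_1(G) \geq 2(d(G)-1)$ as
\[
\frac{\sum_v d_v^2 - 2m}{m} \;\geq\; \frac{4m}{n} - 2,
\]
and clear denominators (valid since $m,n > 0$) to obtain the equivalent form
\[
n \sum_{v} d_v^2 \;\geq\; 4m^2 \;=\; \Bigl(\sum_v d_v\Bigr)^{\!2}.
\]
This is precisely Jensen's inequality applied to the convex function $f(x)=x^2$ on the values $d_1,\dots,d_n$ (equivalently, Cauchy--Schwarz on the vectors $(d_1,\dots,d_n)$ and $(1,\dots,1)$). The equality condition in Jensen's theorem, namely $d_1 = d_2 = \cdots = d_n$, translates to $G$ being regular, giving the ``if and only if'' characterisation.

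There is no real obstacle here; the only things to verify carefully are the degree formula in the line graph and the algebraic equivalence of the two forms of the inequality. Once these reductions are made, the result is an immediate application of Jensen's inequality as stated just before the theorem.
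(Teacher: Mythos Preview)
Your proof is correct and follows essentially the same approach as the paper: both compute $d_1(G)$ from the degree sequence of $G$ and reduce the inequality to Jensen's inequality, with the only cosmetic difference being that the paper applies Jensen to the convex function $\binom{x}{2}$ via the count $e_1(G)=\sum_v \binom{\deg(v)}{2}$, whereas you apply it to $x^2$ via $2e_1(G)=\sum_v d_v^2 - 2m$; these are equivalent since $\binom{x}{2}$ and $x^2$ differ by a linear function. The equality case (all $d_v$ equal, i.e.\ $G$ regular) is identified identically in both arguments.
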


\begin{proof}

Clearly  $n_1(G)= e(G)  =  \frac{\sum \deg(v_ j )}{2}$   and  $2e(G) = nd(G)$.

Also $ e_1(G)  =  \sum \binom{\deg(v_j)}{2}$ and hence \[2e_1(G) =  2\sum \binom{\deg(v_j)}{2} \geq 2n \binom{d}{2}\]  by the Jensen inequality, since the function $\binom{x}{2}$ is convex.

Therefore \[d_1(G)=  \frac{2e_1(G)}{n_1(G)} \geq  \frac{2nd(G)(d(G)-1)}{ 2e(G)} =  \frac{2nd(G)(d(G)-1)}{nd(G)} =  2(d(G)-1)\] with  equality if and only $G$ is regular, again by the Jensen inequality.  

 \end{proof}

We now consider the difference between the number of edges.

\begin{theorem} \label{dreg}

For a graph $G$,
\[2(e_1(G) - e(G)) =  2\left (\sum \binom{\deg(v_j)}{2} - \frac{\sum \deg(v_j) }{2} \right)\]\[ = \sum \deg(v_j) (\deg(v_j)-2) \geq nd(G)(d(G)-2)\] with equality if and only if $G$ is regular.
\end{theorem}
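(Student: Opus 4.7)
The plan is to establish the algebraic identity first and then apply Jensen's inequality to the convex function $f(x) = x(x-2)$.

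First, I would expand the two sides of the claimed identity. By the handshake lemma, $e(G) = \frac{1}{2}\sum \deg(v_j)$, so $2e(G) = \sum \deg(v_j)$. For $e_1(G)$, I use the standard counting argument for edges of the line graph: each vertex $v_j$ of degree $\deg(v_j)$ in $G$ contributes exactly $\binom{\deg(v_j)}{2}$ pairs of incident edges, and every edge of $L(G)$ arises this way exactly once, giving $e_1(G) = \sum \binom{\deg(v_j)}{2}$. Substituting both expressions,
\[
2(e_1(G) - e(G)) = 2\sum \binom{\deg(v_j)}{2} - \sum \deg(v_j) = \sum \deg(v_j)(\deg(v_j)-1) - \sum \deg(v_j),
\]
which simplifies directly to $\sum \deg(v_j)(\deg(v_j)-2)$. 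This yields both equalities of the statement.

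For the inequality, I would apply Jensen's inequality to the function $f(x) = x(x-2) = x^2 - 2x$, which is strictly convex (its second derivative is $2 > 0$). Jensen gives
\[
\frac{1}{n}\sum_{j=1}^{n} f(\deg(v_j)) \;\geq\; f\!\left(\frac{1}{n}\sum_{j=1}^{n} \deg(v_j)\right) = f(d(G)) = d(G)(d(G)-2).
\]
Multiplying through by $n$ yields the desired bound $\sum \deg(v_j)(\deg(v_j)-2) \geq n\,d(G)(d(G)-2)$.

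Finally, for the equality case, since $f$ is strictly convex (not linear on any nondegenerate interval), Jensen's equality condition forces $\deg(v_1) = \deg(v_2) = \cdots = \deg(v_n)$, i.e., $G$ is regular. Conversely, if $G$ is $d$-regular then every $\deg(v_j)$ equals $d = d(G)$ and both sides of the inequality clearly coincide.

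I do not anticipate a substantive obstacle: the proof is essentially the same pattern as Theorem \ref{avgdeg}, only with the convex function $x(x-2)$ in place of $\binom{x}{2}$. The only subtlety worth stressing is that $f(x) = x^2 - 2x$ can take negative values when $\deg(v_j) \in \{0,1\}$, so one should be careful that the inequality is stated as an inequality of real numbers (not of nonnegative quantities); this does not affect Jensen, which holds for arbitrary real values of $f$.
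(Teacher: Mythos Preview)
Your proposal is correct and follows exactly the approach the paper takes: the paper's proof consists of the single sentence ``This result is clear using the Jensen inequality,'' and your argument simply fills in the routine details of the identity and the application of Jensen to the convex quadratic $x(x-2)$.
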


 \begin{proof}

This result is clear using the Jensen inequality.

\end{proof}
We now consider the balanced degree sequence. First we give some notation.
We let 
$F(n,m)$  denote the set of all non-increasing non-negative integer sequences consisting of $n$ terms summing to $m$, and let $y=y(n,m)=y_1,\ldots y_n$ be the unique sequence in $F(n,m)$ such that $y_1-y_n\leq 1$.

\begin{theorem} \label{degsequence}

For any sequence $t=t_1 \ldots t_n \in F(n,m)$,
\[ \sum_{j=1}^n \binom{t_j}{2} \geq \sum_{j=1}^n \binom{y_j}{2} \geq n \binom{m/n}{2}\] 
and equality holds for the rightmost inequality if and only if $y_1=y_n$.

 \end{theorem}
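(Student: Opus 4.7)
The plan is to prove the two inequalities separately. The rightmost inequality follows essentially immediately from Jensen's inequality applied to the convex function $f(x) = \binom{x}{2} = x(x-1)/2$: since $y \in F(n,m)$ means $\sum y_j = m$, Jensen gives
$$\frac{1}{n}\sum_{j=1}^n \binom{y_j}{2} \;\geq\; \binom{\sum y_j / n}{2} \;=\; \binom{m/n}{2},$$
with equality iff all $y_j$ are equal, which in integer terms means $y_1 = y_n$ (i.e.\ $n \mid m$).

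For the leftmost inequality, I will use a smoothing/exchange argument to show that $y$ is the unique minimizer of $\Phi(t) := \sum_j \binom{t_j}{2}$ over $F(n,m)$. Suppose $t \in F(n,m)$ is \emph{not} balanced, meaning $t_1 - t_n \geq 2$. Then replacing the pair $(t_1, t_n)$ by $(t_1 - 1, t_n + 1)$ (and re-sorting) still gives a sequence in $F(n,m)$, and the change in $\Phi$ is
$$\Bigl[\binom{t_1-1}{2} + \binom{t_n+1}{2}\Bigr] - \Bigl[\binom{t_1}{2} + \binom{t_n}{2}\Bigr] \;=\; -\,(t_1-1) + t_n \;=\; -(t_1 - t_n - 1) \;\leq\; -1.$$
So this operation strictly decreases $\Phi$ by at least $1$. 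Since $\Phi$ takes nonnegative integer values, iterating this exchange must terminate, and the only terminal sequences in $F(n,m)$ are those satisfying $t_1 - t_n \leq 1$; but the sequence in $F(n,m)$ with that property is unique, namely $y$. Hence $\Phi(t) \geq \Phi(y)$, with equality iff $t = y$.

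Combining the two bounds yields the full chain of inequalities, with the stated equality condition for the rightmost. I do not anticipate a real obstacle here: the convexity step is the archetypal Jensen application, and the smoothing step is the standard "balancing" trick. The only care needed is in verifying uniqueness of the balanced sequence $y$ (which is already asserted in the definition preceding the theorem) and in checking that the exchange preserves non-negativity, which it does because $t_1 \geq t_n + 2 \geq 2$ guarantees $t_1 - 1 \geq 0$.
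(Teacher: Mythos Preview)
Your proof is correct and follows essentially the same approach as the paper: the smoothing/exchange step for the leftmost inequality and Jensen's inequality for the rightmost are exactly what the paper uses, with your version being slightly more explicit about the size of the decrease and the termination of the process.
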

\begin{proof}
The first inequality comes from the fact that  if $t_1 \geq t_n  +2$  then the sequence with $t_1^* = t_1 -1, \ldots, t_n^* = t_n +1$ (with reordering if necessary)  is again a member of $F(n, m)$ and has a strictly  smaller (triangular) sum, because it is an easy fact that 
$\binom{t_1}{2}+ \binom{t_n}{2} > \binom{t_1-1}{2}+\binom{t_n+1}{2}$.

The rightmost inequality is then the extremal case where all members of $(n,m)$ are equal  and  $d = \frac{ \sum_{j=1}^n y_j }{n}$ is  the common value of the members of $y(n,m)$.    

  \end{proof}

\begin{remark} \label{remark1}
We sometimes need to know the second best possible minimum sequence in $F(n,m)$, which we may need if $y(n,m)$ is not graphical or is realized only by non-prolific graphs.  We consider the following.  Suppose $n \geq 4 $ (otherwise there is no prolific graph),  and consider the sequence $y=y(n,m)$. Then,  either the value of $y_1$ repeats at least twice or the value of $y_n$ repeats at least twice.  Say $y_j = y_{j+1}$  for $j = 1$ or $j = n-1$.  Then replace  $y^*_j =  y_j+1$ and $y^*_{j+1} = y_{j+1} -1$  (for either $j = 1$ or $j = n-1$) and for the rest of the indices set $y^*_i = y_i$  and rearrange the sequence accordingly.  Let the obtained sequence be denoted $y^*=y^*(n,m)$,  then clearly  \[\sum\binom{y^*_i}{2}  = 1+  \sum\binom{y_i}{2}.\]  So the sequence $y^*(n, m)$ derived from $y(n,m)$ by  this switching  operation (in case $y(n,m)$ is not graphic or not realizable by any prolific graph) realizes the second best possible minimum after $y(n,m)$, as it differs by 1 and is a candidate to check if it is realizable by prolific graph.
\end{remark}

Lastly, we consider the line graph of a tree  and the difference in the number of edges.

 \begin{theorem} \label{linetree}

 If $T$ is a tree with $x_j$ vertices of degree $j$ then  \[2e_1(T) -2e(T) = - 2 + \sum_{j\geq 3} (j-1)(j-2)x_j.  \]

 \end{theorem}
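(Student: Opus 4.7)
The plan is to start from the general edge-difference identity hidden inside Theorem \ref{dreg}. That theorem gives, for every graph $G$,
\[2(e_1(G) - e(G)) = \sum_{v} \deg(v)(\deg(v) - 2) = \sum_{j \geq 1} j(j-2)\, x_j.\]
Specialising to the tree $T$ and noting that the $j=2$ term vanishes, the right-hand side becomes $-x_1 + \sum_{j \geq 3} j(j-2)\, x_j$. So the task reduces to rewriting this expression in the claimed form $-2 + \sum_{j \geq 3} (j-1)(j-2)\, x_j$.

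The key extra input is the handshake identity specialised to trees. Since $T$ has $n = \sum_j x_j$ vertices and $n-1$ edges,
\[\sum_{j \geq 1} j\, x_j = 2(n-1) = 2\sum_{j \geq 1} x_j - 2,\]
which rearranges to $\sum_{j \geq 1}(j-2)\, x_j = -2$, i.e.\ $\sum_{j \geq 3}(j-2)\, x_j = x_1 - 2$. This is the tree-specific fact that supplies the ``$-2$'' in the target formula and eliminates the lingering $-x_1$ from the general identity.

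Next I would use the elementary algebraic split $j(j-2) = (j-1)(j-2) + (j-2)$ for $j \geq 3$, so that
\[\sum_{j \geq 3} j(j-2)\, x_j \;=\; \sum_{j \geq 3} (j-1)(j-2)\, x_j \;+\; \sum_{j \geq 3} (j-2)\, x_j \;=\; \sum_{j \geq 3} (j-1)(j-2)\, x_j \;+\; (x_1 - 2).\]
Substituting this back into $-x_1 + \sum_{j \geq 3} j(j-2)\, x_j$ cancels the stray $-x_1$ and leaves exactly $-2 + \sum_{j \geq 3} (j-1)(j-2)\, x_j$, completing the proof.

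There is no real obstacle here: everything follows by combining Theorem \ref{dreg} with the handshake identity for trees and one algebraic rearrangement. The only bookkeeping point worth being careful about is to confirm that the degree-$2$ contribution drops out cleanly on both sides so that the summations can be restricted to $j \geq 3$, which is what makes the reformulation tidy.
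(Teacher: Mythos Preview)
Your proposal is correct and follows essentially the same route as the paper: both start from the identity $2(e_1-e)=\sum_j j(j-2)x_j$, invoke the tree relation $x_1 = 2 + \sum_{j\geq 3}(j-2)x_j$ (equivalently $\sum_{j\geq 1}(j-2)x_j=-2$), and rearrange via $j(j-2)=(j-1)(j-2)+(j-2)$ to obtain the stated formula. The only cosmetic difference is that the paper computes $2e_1$ and $2e$ directly rather than citing Theorem~\ref{dreg}, but the algebra is identical.
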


\begin{proof}

In trees we have $x_ 1 =  2 + \sum_{j \geq 2} (j-2) x_j$, where $x_j$ is the number of vertices of degree $j$. 

Clearly  $2e(T) =  \sum \deg(v_j) =  \sum jx_j$  while  \[2e_1(G) =  \sum \deg(v_ j)(\deg(v_ j) - 1)  =  \sum j(j-1) x_j.\]

Hence \[2e_1(T) - 2e(T)  = \sum j(j-2)x_j  =   -x_1  +  \sum_{j \geq 2} j(j-2)x_j  =    - 2  -  \sum (j-2)x_j + \sum j(j-2)x_j  = \]\[- 2 + \sum_{j \geq 3} (j-1)(j-2)x_j .\]

 \end{proof}

\begin{theorem} \label{eGeH}

Let $G$ be a connected  graph  and $H$ a non-empty subgraph of $G$, then  $e_1(G) -e_1(H)\geq   e(G) - e(H)$.

\end{theorem}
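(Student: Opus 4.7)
My plan is to reduce the inequality to an algebraic identity in the vertex degrees and then prove that identity by a greedy edge-addition argument exploiting the connectedness of $G$. Viewing $H$ as a spanning subgraph of $G$ by including isolated vertices when necessary, set $d_v = \deg_G(v)$ and $h_v = \deg_H(v)$, so $d_v \geq h_v$. A direct expansion using $\binom{k}{2} = k(k-1)/2$ shows that
\[
2\bigl(e_1(G) - e_1(H)\bigr) - 2\bigl(e(G) - e(H)\bigr) = \sum_{v}(d_v - h_v)(d_v + h_v - 2),
\]
so the claim is equivalent to this sum being non-negative. A vertex-wise argument is impossible: the term at $v$ is negative precisely when $h_v = 0$ and $d_v = 1$, so some global device is required.

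I would instead order the edges of $E(G)\setminus E(H)$ as $e_1,\ldots,e_m$ and set $K_0 = H$, $K_i = K_{i-1}+e_i$. Using the identity $\binom{k+1}{2} - \binom{k}{2} = k$, inserting $e_i = u_iv_i$ gives
\[
\bigl[e_1(K_i) - e(K_i)\bigr] - \bigl[e_1(K_{i-1}) - e(K_{i-1})\bigr] = \deg_{K_{i-1}}(u_i) + \deg_{K_{i-1}}(v_i) - 1.
\]
If the ordering is chosen so that each new edge has at least one endpoint that already sits on an edge of $K_{i-1}$, then each of these increments is non-negative and telescoping yields $e_1(G) - e(G) \geq e_1(H) - e(H)$, which rearranges to the claim.

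The crucial and only non-trivial step is establishing that such an ordering exists, and here both connectedness of $G$ and the hypothesis $E(H) \neq \emptyset$ enter. At any stage with $K_i \neq G$, let $A \subseteq V(G)$ be the set of vertices incident to some edge of $K_i$; then $A \supseteq V(H) \neq \emptyset$. If no remaining edge of $E(G)\setminus E(K_i)$ had an endpoint in $A$, then $G$ would have no edge between $A$ and $V(G)\setminus A$, so connectedness of $G$ would force $A = V(G)$; but then every remaining edge would have both endpoints in $A$, contradicting our assumption that none meets $A$. Hence a valid choice of $e_{i+1}$ always exists, completing the plan. I anticipate no further obstacle beyond producing this ordering.
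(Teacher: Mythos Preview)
Your proof is correct, and the edge-addition ordering argument is a genuinely different route from the paper's. The paper reduces to the same algebraic observation --- the only troublesome vertices are those with $h_v = 0$ and $d_v = 1$ --- but then proceeds by minimum counterexample on $|V(G)|$: it locates a leaf $w$ of $G$ that is isolated in $H$, deletes it to obtain a smaller connected graph $G^* = G\setminus\{w\}$ still containing $H$, applies the inductive hypothesis to the pair $(G^*,H)$, and finally checks that $e_1(G) - e_1(G^*) \geq 1$ because the neighbour $z$ of $w$ has $\deg_G(z)\geq 2$ (here $n\geq 4$ and connectedness are used). Your approach instead builds up from $H$ to $G$ one edge at a time, guaranteeing via the greedy choice that each increment $\deg_{K_{i-1}}(u_i) + \deg_{K_{i-1}}(v_i) - 1$ is non-negative; this is arguably cleaner, avoiding the small-$n$ base cases and the contradiction framing, and it makes the joint role of connectedness of $G$ and $E(H)\neq\emptyset$ completely explicit. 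One small slip to fix: after you pad $H$ to be spanning you have $V(H)=V(G)$, so the assertion ``$A \supseteq V(H)$'' is literally false; what you actually need, and what your argument uses, is only that $A$ contains the endpoints of the edges of $H$, hence $A\neq\emptyset$ since $E(H)\neq\emptyset$.
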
 

\begin{proof}

For $n = 2$  it follows that $G = H = K_2$ and this is trivially true. For  $n= 3$,  the only graphs for $G$ are  $P_3$ and $C_3$, and the only graphs for $H$ (ignoring isolated vertices) are $P_2$, $P_3$ and $C_3$ and the above holds.

Suppose $G$ is a minimum counter example with respect to $n= |V(G) |$, namely there exists non-empty subgraph $H$ of $G$ such that  $e_1(G) - e_1(H)  <   e(G) - e(H)$.  Clearly $n \geq 4$. Let $a_j   = \deg(v_ j)$  be the degree of $v_j$ in $G$  and $b_j$ be  the degree of $v_j$ in $H$ and observe that $a_j \geq b_j$.  Then for this counter example pair  we have \[2( e_1(G)  - e(G)) =    \sum a_j(a_j-2) <  \sum b_j( b_j - 2)  = 2( e_1(H)  - e(H)).\]

However this can happen only if $G$ contains a vertex $w$ of degree 1 which is not in $H$,  since otherwise  $a_j(a_j- 2)  \geq  b_j( b_j-2)$.

Define $G^* = G \backslash \{ w\}$. Then  $G^*$ is connected, $e(G^*) = e(G)- 1$ and $|V(G^*)| \geq 3$.  We apply the fact that $G$ is minimum counter example.  Then \[e_1(G^*) - e_1(H) \geq e(G^*) - e(H) =  e(G)- 1 - e(H).\]
So $e_1(G^*) \geq  e(G)- 1 + e_1(H)- e(H)$.  Hence if $e_1(G) \geq e_1(G^*) +1$ we are done .

Let $z$ be the neighbour in $G$ of the deleted leaf $w$. The leaf $w$ contributes nothing to $e_1(G)$ but since $G$ is connected and  $n \geq 4$, $\deg(z) \geq 2$, and in $G^* $, $\deg(z)$ decreases by 1 and is at least 1 so we have 
\[e_1(G)  -e_1(G^*)  = \binom{\deg(z)}{2}- \binom{\deg(z)- 1}{2} \geq 1\] and equality holds only if $\deg(w)  = 2$ and the claim is proved.

\end{proof}

Certain graphs play an important role in extremal cases of the  indices  of several parameters.  We describe and define the following:
\begin{definition}
A subdivision graph  of $K_{1,3}$  is called  type A, B, C  respectively if  the centre vertex of degree 3 is adjacent to 2, 1, 0 leaves respectively.  
\end{definition}

\noindent
These graphs are illustrated in Figure \ref{subdivK13}.

\bigskip
\begin{figure} [H]
	\begin{center}
%
%

\psscalebox{1.0 1.0} 
{
\begin{pspicture}(0,-2.0710464)(15.717115,2.0710464)
\psdots[linecolor=black, dotsize=0.2](2.5185578,0.80751127)
\psdots[linecolor=black, dotsize=0.2](1.7185577,0.007511268)
\psdots[linecolor=black, dotsize=0.2](2.5185578,0.007511268)
\psdots[linecolor=black, dotsize=0.2](3.3185577,0.007511268)
\rput[bl](2.3585577,1.0875113){$x$}
\rput[bl](0.7985577,1.8075112){\it Type $A$ subdivision}
\psdots[linecolor=black, dotsize=0.2](7.698558,0.76751125)
\psdots[linecolor=black, dotsize=0.2](6.8985577,-0.032488734)
\psdots[linecolor=black, dotsize=0.2](7.698558,-0.032488734)
\psdots[linecolor=black, dotsize=0.2](8.498558,-0.032488734)
\rput[bl](7.5385575,1.0475112){$x$}
\rput[bl](5.9785576,1.7675112){\it Type $B$ subdivision}
\psdots[linecolor=black, dotsize=0.2](13.278558,0.76751125)
\psdots[linecolor=black, dotsize=0.2](12.478558,-0.032488734)
\psdots[linecolor=black, dotsize=0.2](13.278558,-0.032488734)
\psdots[linecolor=black, dotsize=0.2](14.078558,-0.032488734)
\rput[bl](13.118558,1.0475112){$x$}
\rput[bl](11.5585575,1.7675112){\it Type $C$ subdivision}
\psline[linecolor=black, linewidth=0.04](2.4985578,0.7875113)(1.0785577,-0.5924887)(1.0785577,-0.5924887)
\psdots[linecolor=black, dotsize=0.2](1.0785577,-0.5924887)
\psline[linecolor=black, linewidth=0.04, linestyle=dotted, dotsep=0.10583334cm](1.0585577,-0.6324887)(0.13855769,-1.5724888)
\psdots[linecolor=black, dotsize=0.2](0.098557696,-1.6524887)
\psline[linecolor=black, linewidth=0.04](2.5385578,0.74751127)(2.5385578,0.04751127)(2.4985578,0.007511268)
\psline[linecolor=black, linewidth=0.04](2.5185578,0.80751127)(3.3385577,0.007511268)
\psline[linecolor=black, linewidth=0.04](7.678558,0.74751127)(6.9185576,-0.012488732)(6.8985577,-0.032488734)
\psline[linecolor=black, linewidth=0.04](7.718558,0.7875113)(7.718558,0.007511268)(7.658558,0.007511268)
\psline[linecolor=black, linewidth=0.04](7.718558,0.74751127)(8.478558,-0.012488732)
\psline[linecolor=black, linewidth=0.04](13.318558,0.7275113)(12.478558,-0.012488732)
\psline[linecolor=black, linewidth=0.04](13.258557,0.70751125)(13.258557,0.027511269)
\psline[linecolor=black, linewidth=0.04](13.298557,0.7275113)(14.0585575,-0.032488734)
\psline[linecolor=black, linewidth=0.04](6.8785577,-0.07248873)(6.258558,-0.6524887)(6.258558,-0.6524887)
\psdots[linecolor=black, dotsize=0.2](6.278558,-0.69248873)
\psline[linecolor=black, linewidth=0.04](7.718558,-0.052488733)(7.698558,-0.7724887)
\psdots[linecolor=black, dotsize=0.2](7.718558,-0.79248875)
\psline[linecolor=black, linewidth=0.04](12.438558,-0.07248873)(11.818558,-0.67248875)(11.818558,-0.67248875)
\psdots[linecolor=black, dotsize=0.2](11.818558,-0.67248875)
\psline[linecolor=black, linewidth=0.04](13.258557,-0.07248873)(13.238558,-0.73248875)(13.298557,-0.7724887)
\psdots[linecolor=black, dotsize=0.2](13.258557,-0.75248873)
\psline[linecolor=black, linewidth=0.04](14.158558,-0.052488733)(14.098557,-0.07248873)(14.718557,-0.67248875)
\psdots[linecolor=black, dotsize=0.2](14.718557,-0.69248873)
\psline[linecolor=black, linewidth=0.04, linestyle=dotted, dotsep=0.10583334cm](6.238558,-0.75248873)(5.3185577,-1.6924888)
\psdots[linecolor=black, dotsize=0.2](5.278558,-1.7724887)
\psline[linecolor=black, linewidth=0.04, linestyle=dotted, dotsep=0.10583334cm](7.738558,-0.85248876)(7.698558,-1.8524888)(7.698558,-1.8524888)
\psdots[linecolor=black, dotsize=0.2](7.698558,-1.9724888)
\psline[linecolor=black, linewidth=0.04, linestyle=dotted, dotsep=0.10583334cm](11.798557,-0.69248873)(10.878558,-1.6324887)
\psdots[linecolor=black, dotsize=0.2](10.838557,-1.7124888)
\psline[linecolor=black, linewidth=0.04, linestyle=dotted, dotsep=0.10583334cm](13.298557,-0.8324887)(13.258557,-1.8324888)(13.258557,-1.8324888)
\psdots[linecolor=black, dotsize=0.2](13.258557,-1.9524888)
\psline[linecolor=black, linewidth=0.04, linestyle=dotted, dotsep=0.10583334cm](14.778558,-0.73248875)(15.658558,-1.8724887)
\psdots[linecolor=black, dotsize=0.2](15.618558,-1.7924887)
\end{pspicture}
}
	\caption{Subdivision types of $K_{1,3}$}\label{subdivK13}
    \end{center}
\end{figure}
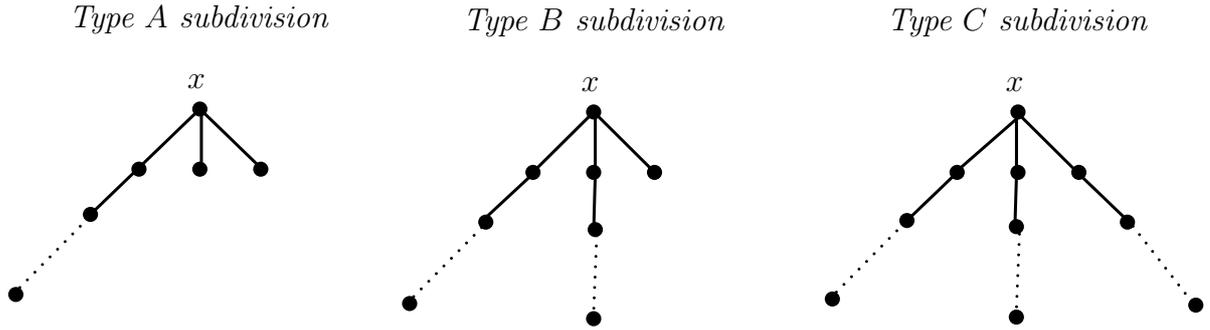

\noindent
We will also use the double star $S_{2,2}$ and its subdivisions as shown in Figure \ref{doublestar}. 

\bigskip
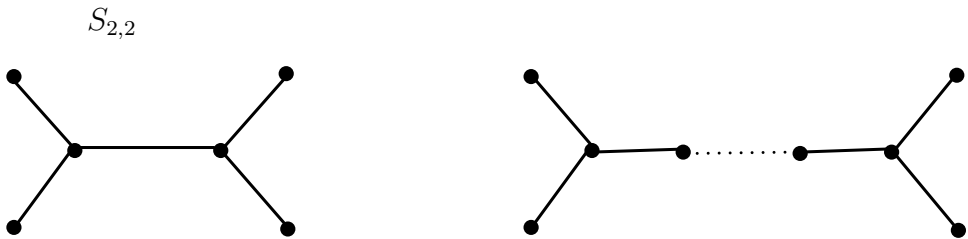
\begin{figure} [H]
	\begin{center}
%
%

\psscalebox{1.0 1.0} 
{
\begin{pspicture}(0,-1.4560465)(12.617115,1.4560465)
\psdots[linecolor=black, dotsize=0.2](0.098557696,0.68251127)
\psdots[linecolor=black, dotsize=0.2](0.098557696,-1.3174888)
\psdots[linecolor=black, dotsize=0.2](0.89855766,-0.29748872)
\psdots[linecolor=black, dotsize=0.2](2.8185577,-0.29748872)
\psdots[linecolor=black, dotsize=0.2](3.6785576,0.7225113)
\psdots[linecolor=black, dotsize=0.2](3.6985576,-1.3374888)
\psdots[linecolor=black, dotsize=0.2](6.8985577,0.68251127)
\psdots[linecolor=black, dotsize=0.2](6.8985577,-1.3174888)
\psdots[linecolor=black, dotsize=0.2](7.698558,-0.29748872)
\psdots[linecolor=black, dotsize=0.2](11.638557,-0.31748873)
\psdots[linecolor=black, dotsize=0.2](12.498558,0.70251125)
\psdots[linecolor=black, dotsize=0.2](12.518558,-1.3574888)
\psline[linecolor=black, linewidth=0.04](0.11855769,0.6025113)(0.8785577,-0.25748873)(0.11855769,-1.2974887)(0.11855769,-1.2974887)
\psline[linecolor=black, linewidth=0.04](0.9185577,-0.25748873)(2.7385576,-0.25748873)
\psline[linecolor=black, linewidth=0.04](3.6785576,0.68251127)(2.8385577,-0.27748874)
\psline[linecolor=black, linewidth=0.04](2.8385577,-0.31748873)(3.7185576,-1.3374888)
\psline[linecolor=black, linewidth=0.04](6.9185576,0.68251127)(7.718558,-0.25748873)
\psline[linecolor=black, linewidth=0.04](7.658558,-0.27748874)(6.9185576,-1.2974887)
\psline[linecolor=black, linewidth=0.04](11.658558,-0.31748873)(12.498558,0.7225113)
\psline[linecolor=black, linewidth=0.04](11.678557,-0.33748874)(12.518558,-1.3574888)
\psline[linecolor=black, linewidth=0.04](7.718558,-0.31748873)(8.918558,-0.27748874)
\psline[linecolor=black, linewidth=0.04](11.578558,-0.27748874)(10.458558,-0.31748873)
\psdots[linecolor=black, dotsize=0.2](8.898558,-0.31748873)
\psdots[linecolor=black, dotsize=0.2](10.438558,-0.33748874)
\psline[linecolor=black, linewidth=0.04, linestyle=dotted, dotsep=0.10583334cm](8.918558,-0.33748874)(10.398558,-0.31748873)(10.398558,-0.31748873)
\rput[bl](1.0585577,1.1825112){$S_{2,2}$}
\end{pspicture}
}
	\caption{$S_{2,2}$ and its subdivision type} \label{doublestar}
	\end{center}
\end{figure}

Another useful graph is $CP(k,n-k)$ which consists of a cycle and a pendant path as  illustrated in Figure \ref{CPnk}.
\begin{figure}[H]
	\begin{center}
%
%

\psscalebox{1.0 1.0} 
{
\begin{pspicture}(0,-2.0)(10.5971155,2.0)
\pscircle[linecolor=black, linewidth=0.04, dimen=outer](2.0985577,0.0){2.0}
\psdots[linecolor=black, dotsize=0.2](4.0985575,0.0)
\psdots[linecolor=black, dotsize=0.2](3.2985578,1.6)
\psdots[linecolor=black, dotsize=0.2](3.2985578,-1.6)
\psdots[linecolor=black, dotsize=0.2](0.89855766,-1.6)
\psdots[linecolor=black, dotsize=0.2](0.098557696,0.0)
\psdots[linecolor=black, dotsize=0.2](0.89855766,1.6)
\psline[linecolor=black, linewidth=0.04](4.0985575,0.0)(6.8985577,0.0)
\psline[linecolor=black, linewidth=0.04](8.898558,0.0)(10.498558,0.0)
\psdots[linecolor=black, dotsize=0.2](5.2985578,0.0)
\psdots[linecolor=black, dotsize=0.2](6.8985577,0.0)
\psdots[linecolor=black, dotsize=0.2](8.898558,0.0)
\psdots[linecolor=black, dotsize=0.2](10.498558,0.0)
\psline[linecolor=black, linewidth=0.04, linestyle=dotted, dotsep=0.10583334cm](6.8985577,0.0)(8.898558,0.0)
\rput[bl](1.2985576,0.0){$C_k$}
\rput[bl](7.2985578,-0.92){$n-k$}
\psline[linecolor=black, linewidth=0.04, arrowsize=0.033cm 2.0,arrowlength=1.4,arrowinset=0.0]{<->}(5.3385577,-0.52)(10.538558,-0.52)
\end{pspicture}
}
		\caption{The graph $CP(k,n-k)$: a path of length $n-k$ attached to a $k$-cycle} \label{CPnk}
	\end{center}
\end{figure}
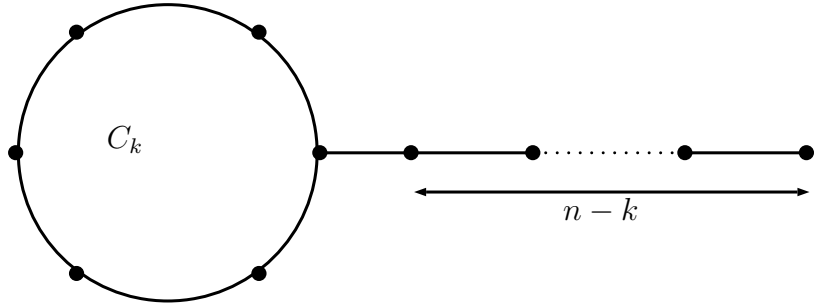

We state the following three lemmas first, then we give their proofs.

\begin{lemma} \label{typeC}
Let $G$ be a  type  C subdivision of $K_{1,3}$  with the three paths at the centre vertex $v$  having respectively $m_1 \geq m_2 \geq  m_3  \geq 2$ edges.  Then  
\[e_1(G) = e( G) \mbox{, } e_2(G)  =  e(G) +3 \mbox{, } e_3(G) \geq e( G) +15.\]
\end{lemma}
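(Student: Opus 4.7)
The overall plan is to apply three times the identity
\[
2\,e(L(H)) \;=\; \sum_{w\in V(H)} \deg_H(w)\bigl(\deg_H(w)-1\bigr),
\]
which follows directly from the definition of the line graph. Each application requires only that I read off the degree sequence of $H$, taking $H = G$, then $H = L(G)$, then $H = L^2(G)$.

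Step 1 (computing $e_1(G)$) is immediate: the hypothesis that $G$ is a type $C$ subdivision of $K_{1,3}$ with all three path lengths $m_i \geq 2$ forces the degree sequence of $G$ to be one vertex of degree $3$ (the centre $v$), three leaves (the far endpoints of the paths) and exactly $e(G)-3$ vertices of degree $2$. Plugging into the identity gives $2e_1(G) = 6 + 2(e(G)-3) = 2e(G)$.

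Step 2 (computing $e_2(G)$) requires describing $L(G)$. The three edges of $G$ at $v$ are pairwise adjacent, so they span a triangle in $L(G)$; from each triangle-vertex a pendant path of length $m_i - 1$ hangs off, terminating at the vertex corresponding to the leaf-edge of path $i$. Hence $L(G)$ has three vertices of degree $3$, three leaves and $e(G)-6$ vertices of degree $2$, and the identity gives $e_2(G) = e(G)+3$.

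For Step 3 (the lower bound on $e_3(G)$) I read the degree sequence of $L^2(G)$ directly off $L(G)$ using the fact that an edge $uw$ of $L(G)$ has degree $\deg_{L(G)}(u)+\deg_{L(G)}(w)-2$ in $L^2(G)$. The three triangle-edges of $L(G)$ each become a degree-$4$ vertex of $L^2(G)$. On the pendant path corresponding to $m_i$, the edge incident to the triangle gives a degree-$3$ vertex when $m_i \geq 3$, interior edges give degree-$2$ vertices, and the leaf-edge gives a degree-$1$ vertex; however, when $m_i = 2$ this path of $L(G)$ collapses to a single edge joining a triangle-vertex to a leaf, which instead contributes a single degree-$2$ vertex.

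The main obstacle is the case analysis produced by the number of indices $i$ with $m_i=2$. I would handle the four subcases (zero, one, two, three of the $m_i$ equal to $2$) separately and compute the sum $\sum \deg(\deg-1)$ in each. A short calculation in each subcase yields
\[
e_3(G) - e(G) \in \{18,\,17,\,16,\,15\},
\]
where the values decrease as more of the $m_i$ drop to $2$. This establishes $e_3(G) \geq e(G)+15$ and moreover pinpoints equality at $m_1 = m_2 = m_3 = 2$; none of this requires any tools beyond the line-graph identity above, only careful bookkeeping of the degree sequence.
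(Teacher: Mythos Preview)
Your argument is correct: the line-graph identity $2\,e(L(H)) = \sum_w \deg_H(w)\bigl(\deg_H(w)-1\bigr)$ together with the explicit degree sequences you wrote down for $G$, $L(G)$ and $L^2(G)$ gives the claimed values, and your four-way case split on the number of indices with $m_i=2$ yields exactly $e_3(G)-e(G)\in\{18,17,16,15\}$.

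The paper proceeds differently. It invokes Theorem~\ref{eGeH}, the monotonicity statement $e_1(G)-e_1(H)\ge e(G)-e(H)$ for a subgraph $H\subseteq G$, which iterates (since $L(H)\subseteq L(G)$) to give $e_k(G)-e(G)\ge e_k(H)-e(H)$ for every $k$. Taking $H$ to be the minimal type~C subdivision $m_1=m_2=m_3=2$ and checking that single graph by hand therefore already yields the inequality $e_3(G)\ge e(G)+15$ with no case analysis. Your approach is more elementary---it uses nothing beyond the degree-sum identity---and it delivers extra information (the exact value of $e_3(G)-e(G)$ in each case and the characterisation of equality). The paper's approach is shorter once the monotonicity lemma is available, trading the fourfold bookkeeping for a single base-case computation.
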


\begin{lemma} \label{typeB}
 
Let $G$ be a type B  subdivision of $K_{1,3}$  with the three paths at the centre vertex $v$  having respectively $m_1 \geq m_2 > m_3 =1$ edges.  Then

\[e_1(G) = e( G)  \mbox{, }  e_2(G) =  e(G) +2  \mbox{, } e_3(G) \geq e( G) + 9.\]

\end{lemma}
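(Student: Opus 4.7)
The plan is to compute $e_1(G), e_2(G), e_3(G)$ directly by iteratively determining the degree sequence of $L^{k-1}(G)$ and applying the identity $e(L(H)) = \sum_{v\in V(H)} \binom{d_H(v)}{2}$.

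First I would read off the degree sequence of $G$: one centre vertex $v$ of degree $3$, three leaves, and $(m_1-1)+(m_2-1) = m_1+m_2-2$ internal degree-$2$ path vertices. Plugging into the identity gives $e_1(G) = \binom{3}{2} + (m_1+m_2-2)\binom{2}{2} = m_1+m_2+1 = e(G)$, which is the first assertion.

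Next I would describe $L(G)$ explicitly. Label the three edges incident to $v$ as $a_1, b_1, c_1$, one per path, where $c_1$ is the length-$1$ path. These three edges pairwise share $v$, so $\{a_1,b_1,c_1\}$ is a triangle in $L(G)$; the rest of the length-$m_1$ path becomes a pendant path of length $m_1-1$ at $a_1$ and similarly at $b_1$, while $c_1$ acquires no pendant since $m_3=1$. Thus $d_{L(G)}(a_1)=d_{L(G)}(b_1)=3$, $d_{L(G)}(c_1)=2$, there are $(m_1-2)+(m_2-2)$ interior path vertices of degree $2$, and two leaves. Summing $\binom{d}{2}$ gives $e_2(G) = 3+3+1+(m_1+m_2-4) = e(G)+2$.

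For $e_3(G)$ the main task is to read off the degree sequence of $L^2(G)$ from the local structure of $L(G)$ around its triangle. Denote the three triangle edges of $L(G)$ by $e_{ab}, e_{ac}, e_{bc}$, and the two ``first'' pendant edges by $f_1$ (incident to $a_1$, towards the $a$-path) and $g_1$ (incident to $b_1$). In $L^2(G)$ the edge $e_{ab}$ meets the other two triangle edges plus $f_1$ at $a_1$ and plus $g_1$ at $b_1$, giving $d_{L^2(G)}(e_{ab})=4$; similarly $e_{ac}$ and $e_{bc}$ each pick up degree $3$. The vertex $f_1$ is adjacent to $e_{ab}$ and $e_{ac}$ through $a_1$, and to $f_2$ through $a_2$ when $m_1\geq 3$, giving degree $3$ or $2$; $g_1$ is analogous. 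The remaining vertices of $L^2(G)$ are ordinary interior path vertices of degree $2$ (contributing $1$ each) and two leaves.

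The final computation then splits into three cases. If $m_1=m_2=2$, both $f_1$ and $g_1$ have degree $2$ and $e_3(G)=6+3+3+1+1=14=e(G)+9$, which is tight. If $m_1\geq 3, m_2=2$ (using $m_1\geq m_2$), then $f_1$ contributes $3$ and $m_1-3$ extra degree-$2$ vertices appear along the $a$-path, yielding $e_3(G)=e(G)+10$. If $m_1,m_2\geq 3$ both $f_1,g_1$ contribute $3$ and we get $e_3(G)=e(G)+11$. In every case $e_3(G)\geq e(G)+9$. The main bookkeeping obstacle is making sure the triangle edges of $L(G)$ receive the correct extra neighbours in $L^2(G)$ — in particular that $e_{ab}$ reaches degree $4$ while $e_{ac},e_{bc}$ only reach $3$, since $c_1$ has no pendant; once this is pinned down the case analysis is mechanical.
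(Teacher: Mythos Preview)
Your proof is correct. The computations of the degree sequences of $G$, $L(G)$, and $L^2(G)$ are all accurate, and the three-case split for $e_3$ yields the exact values $e(G)+9$, $e(G)+10$, $e(G)+11$ as you claim.

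Your route, however, differs from the paper's. The paper does not iterate the degree sequence; instead it invokes the monotonicity result (Theorem~\ref{eGeH}) that $e_1(G)-e_1(H)\geq e(G)-e(H)$ whenever $H$ is a connected subgraph of $G$. Iterating this (since $L(H)\subseteq L(G)$) gives $e_3(G)-e_3(H)\geq e(G)-e(H)$, so it suffices to verify the single minimal case $m_1=m_2=2$, $m_3=1$, for which $e_3=14=e(G)+9$. The paper thus avoids the case analysis on whether the pendant paths have length $2$ or $\geq 3$. On the other hand, your direct computation is entirely self-contained (it does not rely on Theorem~\ref{eGeH}) and produces sharper information, namely the exact value of $e_3(G)$ in every case rather than just the lower bound.
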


\begin{lemma} \label{typeA}
 
Let $G$ be a type A subdivision of $K_{1,3}$  with the three paths at the centre vertex $v$  having respectively $m_1 >m_2=  m_3 =1$ edges.  Then  

\begin{enumerate}
\item{if  $m_1  \geq 3$  then  $e_1(G) =  e(G) \mbox{, }   e_2(G) = e(G) + 1 \mbox{, } e_3(G) \geq e(G) +4.$}
\item{if $m_1 = 2$    then  $e_1(G) =  e(G) \mbox{, }   e_2(G)= e(G) + 1 \mbox{, } e_3(G) = e(G) +4.$}

\end{enumerate}
\end{lemma}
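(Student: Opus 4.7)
The plan is to do this by a direct structural description of $G$, $L(G)$, and $L^2(G)$, and then count edges using $e_1(H)=\sum_v\binom{\deg_H(v)}{2}$. A type A subdivision with $m_1>m_2=m_3=1$ consists of the centre $v$ of degree $3$ adjacent to two leaves $\ell_1,\ell_2$ and to $u_1$, together with a path $u_1,u_2,\dots,u_{m_1}$. Hence $e(G)=m_1+2$, and the degree sequence of $G$ is one vertex of degree $3$, $m_1-1$ vertices of degree $2$ (namely $u_1,\dots,u_{m_1-1}$), and three leaves. Applying Theorem \ref{linetree} (or just summing $\binom{\deg}{2}$) gives $e_1(G)=3+(m_1-1)=m_1+2=e(G)$.

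Next I would describe $L(G)$ explicitly. The three edges at $v$ form a triangle on the vertices $a=v\ell_1$, $b=v\ell_2$, $c_1=vu_1$ of $L(G)$, and $c_1$ is also the first vertex of the path $c_1,c_2,\dots,c_{m_1}$ corresponding to the consecutive edges $u_{i-1}u_i$ of $G$. Thus $L(G)$ is the graph $CP(3,m_1-1)$ (a triangle with a pendant path of length $m_1-1$). Reading off degrees in $L(G)$ — one vertex of degree $3$ (namely $c_1$), $m_1$ vertices of degree $2$ (the two other triangle vertices $a,b$ together with $c_2,\dots,c_{m_1-1}$ when $m_1\ge 2$), and one vertex of degree $1$ (namely $c_{m_1}$) — I compute
\[
e_2(G)=e_1(L(G))=\binom{3}{2}+m_1\binom{2}{2}+\binom{1}{2}=m_1+3=e(G)+1,
\]
which holds uniformly for every $m_1\ge 2$.

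For $e_3(G)$ I use the standard identity that in $L(H)$ the vertex corresponding to an edge $xy\in E(H)$ has degree $\deg_H(x)+\deg_H(y)-2$, applied to $H=L(G)$. This lets me list the degree sequence of $L^2(G)$ without having to draw the whole graph. For $m_1\ge 3$ it produces one vertex of degree $2$ (from $ab$), three vertices of degree $3$ (from $ac_1,bc_1,c_1c_2$), $m_1-3$ further vertices of degree $2$ (from the middle edges of the pendant path), and one vertex of degree $1$ (from $c_{m_1-1}c_{m_1}$); summing $\binom{\deg}{2}$ yields
\[
e_3(G)=1+3\cdot 3+(m_1-3)+0=m_1+7=e(G)+5\ \ge\ e(G)+4,
\]
giving part (1). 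For $m_1=2$ the analogous enumeration gives exactly the degrees $2,3,3,2$ on four vertices of $L^2(G)$, so $e_3(G)=1+3+3+1=8=e(G)+4$, giving part (2) with equality.

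There is no real obstacle here; the only thing to be careful about is keeping the degree bookkeeping correct in the boundary case $m_1=2$, where the path is so short that $c_1c_2$ plays the role of both the triangle-path linking edge and the last edge of the pendant path, and where the $m_1-3$ middle-degree-$2$ vertices count disappears. Once those cases are separated, everything reduces to two applications of $e_1(H)=\sum_v\binom{\deg_H(v)}{2}$.
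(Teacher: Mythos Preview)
Your proof is correct. The degree bookkeeping for $L(G)=CP(3,m_1-1)$ and for $L^2(G)$ is accurate in both the generic case $m_1\ge 3$ and the boundary case $m_1=2$, and the resulting edge counts $e_1(G)=e(G)$, $e_2(G)=e(G)+1$, and $e_3(G)=e(G)+5$ (for $m_1\ge 3$), $e_3(G)=e(G)+4$ (for $m_1=2$) are all right.

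The paper takes a different route: rather than computing the degree sequence of $L^2(G)$ for general $m_1$, it invokes Theorem~\ref{eGeH} (the monotonicity $e_1(G)-e_1(H)\ge e(G)-e(H)$ for $H\subseteq G$, iterated) to reduce to verifying the single minimal instances $m_1=3$ and $m_1=2$ by hand. Your approach is more elementary---it never calls on Theorem~\ref{eGeH}---and it actually yields the exact value $e_3(G)=e(G)+5$ for every $m_1\ge 3$, which is slightly sharper than the stated inequality $e_3(G)\ge e(G)+4$. The paper's approach, on the other hand, scales more cleanly to Lemmas~\ref{typeC} and~\ref{typeB}, where the same reduction-to-the-minimal-subgraph trick handles all three lemmas uniformly without having to track the degree sequence of $L^2(G)$ in each case.
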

\medskip

 \emph{Proof of Lemmas \ref{typeC}, \ref{typeB} and \ref{typeA} }
\begin{itemize}
\item{Lemma \ref{typeC}: By Theorem \ref{eGeH}, it suffices to check  the case where $H$ is the type C subdivision graph with $m_1 =m_2 = m_3$ which gives the result.}
\item{Lemma \ref{typeB}: By Theorem \ref{eGeH}, it is suffices to check the case where $H$  is the type B subdivision with $m_1 = m_2 = 2$, $m_3 = 1$ which gives the result.}
\item{Lemma \ref{typeA}: By Theorem \ref{eGeH} it suffices to check the case where $H$ is a type A subdivision with $m_1 = 3$, $m_2 = m_3 = 1$ and  the case where $H$ is a type A subdivision with $m_1 = 2$, $m_2 = m_3  = 1$  which gives the result.  }
\end{itemize}

\begin{lemma} \label {prolific31}
Suppose $G$ is a prolific graph on $n$ vertices.  

\begin{enumerate}
\item{If  $e(G)  = n +2$ then  $e_1(G) \geq n +8$, namely $e_1(G) \geq e(G) +6$.   Equality is achieved  for all realizations of the graphic sequence with $n -4$  vertices of degree 2 and four vertices of degree 3. }   
\item{If $e(G)  = n +1$  then  $e_1(G) \geq n +4$, namely $e_1(G) \geq e(G) +3$.   Equality  is achieved for a cycle with one chord  or  for $n \geq 6$, in  two vertex-disjoint cycles of order $n_1\geq n_2 \geq 3$  with $n_1+n_2 = n$  connected  by a path with $n+1-(p_1+p_2)$ edges.   } 
\item{If  $e(G) = n$ then  $e_1(G)  \geq  n +1$, namely $e_1(G)  \geq e(G) +1$.  Equality  is achieved  for  a  cycle $C_k$ with an attached path on $n-k$ edges.    }
\end{enumerate}
\end{lemma}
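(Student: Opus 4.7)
The approach rests on the fact that $e_1(G) = \sum_{v \in V(G)} \binom{\deg(v)}{2}$, so $e_1(G)$ depends only on the degree sequence of $G$. I therefore reduce each of the three lower bounds to minimizing $\sum \binom{d_j}{2}$ over degree sequences of connected prolific graphs with the fixed sum $\sum d_j = 2e(G)$. Theorem \ref{degsequence} pinpoints the most balanced integer sequence $y(n, 2e(G))$ as the absolute minimizer over all non-negative integer sequences with that sum; I then check whether $y(n, 2e(G))$ is actually realizable by a prolific graph, or else invoke Remark \ref{remark1} to pass to the next-best sequence.

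In case 1, with $\sum d_j = 2n+4$, the balanced sequence is $(3,3,3,3,2,\ldots,2)$ and yields $\sum \binom{d_j}{2} = 4\cdot 3 + (n-4)\cdot 1 = n+8$. For $n=4$ the unique realization is $K_4$, while for $n \geq 5$ the sequence is realized for instance by $C_n$ with two disjoint chords; every connected realization has $\Delta \geq 3$ and at least two vertices of degree $\geq 3$, so is automatically prolific. In case 2, with $\sum d_j = 2n+2$, the balanced sequence $(3,3,2,\ldots,2)$ gives $n+4$; a cycle with one chord realizes it for all $n \geq 4$, and for $n \geq 6$ the dumbbell graphs (two vertex-disjoint cycles of orders $p_1, p_2$ joined by a path of $n+1-(p_1+p_2)$ edges) give further realizations.

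Case 3 is the one where Remark \ref{remark1} is needed. Here $\sum d_j = 2n$, and the absolute minimizer $(2,2,\ldots,2)$ would give $n$, but the only connected 2-regular graph is $C_n$, which is not prolific. The switching step of Remark \ref{remark1} raises the bound by exactly 1 and produces the second-best sequence $(3,2,\ldots,2,1)$, with $\sum \binom{d_j}{2} = 3 + (n-2) + 0 = n+1$. To identify the extremal graphs, I argue structurally: in any connected realization, the unique degree-3 vertex $v$ is the endpoint of three maximal degree-2 paths; since the graph has only one leaf, exactly one of these paths must terminate at it and the other two must reconnect to $v$, forming a single cycle through $v$. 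The realization is therefore $CP(k, n-k)$, with $k$ the length of the cycle and $n-k$ the number of edges in the pendant path.

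The main obstacle is the case-3 coordination: one must recognize that the unconstrained Jensen minimum is ruled out by the prolific hypothesis, apply Remark \ref{remark1} correctly to obtain the next admissible integer sequence, and then carry out the structural argument that forces $CP(k, n-k)$ as the only prolific realization. A secondary subtlety lies in case 2, where a range of prolific realizations of $(3,3,2,\ldots,2)$ coexist, and one must verify that both the cycle-with-chord family and the dumbbell family genuinely contribute equality instances as listed in the statement.
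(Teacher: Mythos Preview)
Your proposal is correct and follows essentially the same route as the paper: both arguments reduce each case to minimizing $\sum\binom{d_j}{2}$ via Theorem~\ref{degsequence}, identify the balanced sequence $y(n,2e(G))$, verify its realizability by a prolific graph in cases~1 and~2, and in case~3 invoke Remark~\ref{remark1} after ruling out the $2$-regular minimizer. Your write-up is in fact slightly more explicit than the paper's in case~3, where you supply the structural argument that the sequence $(3,2,\ldots,2,1)$ forces the graph to be $CP(k,n-k)$, a point the paper simply asserts.
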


 \begin{proof}
$\mbox{ }$\\
\begin{enumerate}
\item{By  Theorem \ref{degsequence}, the best possible sequence must be the sequence  $y(n,2n+4)$ and in this case it is precisely the sequence containing $n-4$ vertices of degree 2 and 4 vertices of degree 3. This is a graphical sequence with several realizations including cycles with two disjoint chords.}

\item{By  Theorem \ref{degsequence}, the best possible sequence must be the sequence  $y(n,2n+2)$ and in this case it is precisely the sequence containing $n-2$ vertices of degree 2 and 2 vertices of degree 3.  This is a graphical sequence and can be realized  by either a cycle  with a chord or, for $ n \geq 6$, also by   two vertex-disjoint cycles  of order $p_1\geq p_2 \geq 3$  connected by a path on $n+1-(p_1+p_2)$ edges. } 

 \item{By Theorem \ref{degsequence}, the best possible sequence must be the sequence  $y(n,2n)$ and in this case it is precisely the sequence containing $n$ vertices of degree 2 which forms a cycle $C_n$.  But $C_n$ is not prolific.  Hence the best second possible sequence, by Remark \ref{remark1},  is  $n-2$ vertices of degree 2, one vertex of degree 1 and one vertex of degree 3.  This second best possible sequence is graphically realized only by a cycle $C_k$ with an attached path on $n-k$ edges.    }
\end{enumerate}
\end{proof}

\begin{lemma} \label{prolific32}
Let $G$ be a prolific graph on $n$ vertices.
\begin{enumerate}

\item{Suppose  $e(G)  = n+k$  where $n \geq 2k$.  Then  $e_1(G) \geq  e(G)  +3k$ and   this is sharp.  }

 \item{Suppose $e(G)  \geq n +2$.  Then we have $e_1(G)  \geq  e(G) +6$.}
\end{enumerate}
\end{lemma}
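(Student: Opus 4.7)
The plan is to treat both parts via the convexity tools already assembled: Theorem \ref{degsequence} for the integer-balanced degree sequence, and Theorem \ref{dreg} for the Jensen bound on $\sum d_j(d_j-2)$.

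For Part 1, I would set $m = 2e(G) = 2(n+k)$ and use Theorem \ref{degsequence} to bound $e_1(G) = \sum_j \binom{d_j}{2}$ below by evaluating this sum on the balanced sequence $y = y(n, 2n+2k)$. The hypothesis $n \geq 2k$ forces the average degree $d = 2 + 2k/n$ into $[2,3]$, so $y$ consists of exactly $2k$ entries equal to $3$ and $n-2k$ entries equal to $2$. This gives
\[
e_1(G) \;\geq\; 2k\binom{3}{2} + (n-2k)\binom{2}{2} \;=\; 6k + (n-2k) \;=\; n + 4k \;=\; e(G)+3k.
\]
For sharpness (with $k \geq 1$) I would exhibit a prolific realization of this degree sequence, for example a cycle $C_n$ with $k$ pairwise vertex-disjoint chords, which is possible precisely because $n \geq 2k$ and is prolific for $k \geq 1$. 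This also recovers parts (1) and (2) of Lemma \ref{prolific31} as the cases $k=2$ and $k=1$.

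For Part 2, write $e(G) = n+k$ with $k \geq 2$ and split on whether $n \geq 2k$. In the range $n \geq 2k$, Part 1 applies directly and yields $e_1(G) - e(G) \geq 3k \geq 6$. In the remaining range $n < 2k$, I would invoke Theorem \ref{dreg}: with $d = (2n+2k)/n$,
\[
2\bigl(e_1(G) - e(G)\bigr) \;\geq\; nd(d-2) \;=\; (2n+2k)\cdot\frac{2k}{n} \;=\; 4k + \frac{4k^2}{n}.
\]
The condition $n < 2k$ gives $k/n > 1/2$, whence $4k^2/n > 2k$ and $2(e_1(G)-e(G)) > 6k$. Since $n \geq 4$ and $n < 2k$ force $k \geq 3$, we have $6k \geq 18 > 12$, i.e.\ $e_1(G) - e(G) > 6$, as required.

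The main obstacle is the sharpness claim in Part 1: ensuring that for every $k \geq 1$ with $n \geq 2k$ the balanced sequence is not merely graphical but is actually realized by a prolific graph, so that the inequality becomes an equality. In Lemma \ref{prolific31}(3) the analogous issue forced the switching trick of Remark \ref{remark1} to avoid $C_n$; here, however, the sequence already contains at least two vertices of degree $3$, so constructions such as a cycle with disjoint chords (or two cycles joined by a path) sidestep the $C_n$ obstruction, and no correction via the second-best sequence is needed.
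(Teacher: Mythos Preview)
Your argument is correct. Part~1 is essentially the paper's own proof: apply Theorem~\ref{degsequence} to the degree sequence, identify the balanced sequence $y(n,2n+2k)$ as $2k$ threes and $n-2k$ twos, and realise equality by a cycle $C_n$ with $k$ vertex-disjoint chords. (The paper's text in fact writes ``$n-k$ vertices of degree $2$ and $k$ vertices of degree $3$'' at this point, which is a slip; your count is the one consistent with Lemma~\ref{prolific31} and with the stated conclusion $e_1\ge e+3k$.)

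For Part~2 you take a genuinely different route. The paper argues by induction on $e(G)$: the base case $e(G)=n+2$ is Lemma~\ref{prolific31}(1), and for $e(G)\ge n+3$ one deletes a non-bridge edge to obtain a prolific $H$ with $e(H)\ge n+2$, then combines the inductive bound $e_1(H)\ge e(H)+6$ with $e_1(G)\ge e_1(H)+1$. You instead split on whether $n\ge 2k$ or $n<2k$: the former range is Part~1 with $3k\ge 6$, and the latter is dispatched in one stroke by Theorem~\ref{dreg}, since $2(e_1-e)\ge nd(d-2)=4k+4k^{2}/n>6k\ge 18$. Your version is a bit more self-contained, as it avoids checking that the edge deletion preserves prolificness and that the deleted edge contributes at least one to $e_1$; the paper's induction, on the other hand, is a template that would extend uniformly to further inequalities of the shape $e_1\ge e+c$ once the base case is in hand.
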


 \begin{proof}
$\mbox{ }$\\

\begin{enumerate}
\item{Since $n\geq 2k$ we have   $2  < d(G) = \frac{2e(G)}{n} \leq \frac{2(n +k)}{n} \leq \frac{2(n+\frac{n}{2})}{n} \leq 3$, so equality holds only if $n = 2k$.  By Theorem \ref{degsequence}  the best possible lower bound is  $y(n,n+k)$ which must be precisely $n-k$ vertices of degree 2 and  $k$ vertices of degree 3.   This sequence is graphical and as $n$ increases with respect to $k$, many realizations exist including always the cycle $C_n$ with $k$ vertex –disjoint chords   (as $n \geq 2k$). }
\item{For $n = 4$ the condition is possible only for $K_4$ and the conclusion is true.  So we start induction on $n$. Suppose $n \geq 5$ and $e(G)  \geq n+3$ (otherwise it is already proved in Lemma \ref{prolific31} and Lemma \ref{prolific32} for $k=2$).  Then we can delete an edge $e$ such that $H = G-e$  is prolific, and $e(H) \geq n+2$.  Hence, by induction, using  the fact  that $H \subset  G$ implies $L(H) \subset  L(G)$, we get $e_1(G)\geq e_1(H) +1 \geq e(H) +7=e(G)+6$.  }
\end{enumerate}
\end{proof}

\section{Universal  Parameters}
\subsection{The number of edges $e(G)$:   $k(e,\mathcal F)=2$}
\begin{thmx}
Let $G$ be a prolific graph on $n \geq 4$ vertices.  Then
\begin{enumerate}
\item{$e_1(G) \geq e( G)$ with equality holding if and only if  $G$ is a subdivision of  $K_{1,3}$.}
\item{
 $e_1(G) \geq e(G) +1$ when $G$ is not a subdivision of $K_{1,3}$ and equality holds if and only if $G$ is  the double star $S_{2,2}$  or its subdivisions, or the graph $CP(k,n-k)$.}
\item{$e_2(G) \geq e(G) +1$ and  equality holds if and only if $G$ is a  type A subdivision  of $ K_{1,3}$ and  $e_2(G) = e_1(G) +1 = e(G) +1$.}   
\item{$ e_3(G) \geq  e(G) + 4$  with equality if and only if  $G$ is a type A subdivision  of $K_{1,3}$  on four edges, otherwise $e_3(G) \geq  e(G) +5$.}
\item{ $k(e,\mathcal F)=2$}
\end{enumerate}
\end{thmx}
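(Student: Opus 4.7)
The engine of the proof is the identity from Theorem 3.3, which, combined with handshaking $\sum j\,x_j = 2e$ and $\sum x_j = n$, can be rewritten as
$$2\bigl(e_1(G) - e(G)\bigr) \;=\; 2\bigl(e(G) - n(G)\bigr) \;+\; \sum_{j \geq 3} (j-1)(j-2)\, x_j, \qquad (\star)$$
where $x_j$ counts the vertices of degree $j$. Parts (1) and (2) are read off directly from $(\star)$. Since every prolific connected $G$ satisfies $e \geq n-1$ and has at least one vertex of degree $\geq 3$, the right-hand side is non-negative, giving $e_1 \geq e$; equality forces $G$ to be a tree with exactly one degree-$3$ vertex and no higher, i.e.\ a subdivision of $K_{1,3}$. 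For part (2), equality in $e_1 \geq e + 1$ requires the right-hand side of $(\star)$ to equal $2$, yielding either trees with exactly two degree-$3$ vertices (the subdivisions of $S_{2,2}$) or unicyclic graphs with exactly one degree-$3$ vertex on the cycle (the graphs $CP(k, n-k)$); Lemma 3.11 rules out equality when $e \geq n+1$.

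Part (3) relies on the structural observation that $L(G)$ always contains a triangle (inherited from a triangle of $G$ or produced by a vertex of $G$ of degree $\geq 3$), and hence $L(G)$ is never a tree, in particular never a $K_{1,3}$-subdivision. Applying part (1) to $L(G)$ therefore gives $e_2(G) = e_1(L(G)) \geq e(L(G)) + 1 = e_1(G) + 1$; together with $e_1(G) \geq e(G)$ this yields $e_2(G) \geq e(G) + 1$. Equality forces $e_1(G) = e(G)$, so $G$ is a $K_{1,3}$-subdivision, and Lemmas 4.4--4.6 give $e_2 = e+1,\ e+2,\ e+3$ for types A, B, C respectively. Thus equality occurs precisely for type A.

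For part (4), Lemmas 4.4--4.6 handle the $K_{1,3}$-subdivision cases directly, yielding $e_3 \geq e + 4$ with equality only for type A at $m_1 = 2$. For $G$ not a $K_{1,3}$-subdivision we must establish $e_3(G) \geq e(G) + 5$, which we do by cases on $e(G) - n(G)$. If $e \geq n + 1$, Lemma 3.11 (or 3.12) yields $e_1 \geq e + 3$, and a single application of $(\star)$ to $L(G)$ delivers $e_2 \geq e + 6$. For the two extremal classes of part (2), namely $G = CP(k, n-k)$ and $G = S_{2,2}$-subdivision (where $e_1 = e + 1$), a direct degree-sequence computation via $(\star)$ applied to $L(G)$ shows $e_2 = e + 5$. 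All remaining trees and unicyclic graphs satisfy $e_1 \geq e + 2$; applying $(\star)$ to $L(G)$ gives $e_2 \geq e + 4$, and a further strict application of part (1) to $L^2(G)$ (which has cycles, so is not a $K_{1,3}$-subdivision) pushes this to $e_3 \geq e + 5$. Part (5) is then immediate: part (3) yields $\ind(e, G) \leq 2$ for every prolific $G$, and every $K_{1,3}$-subdivision satisfies $e_1 = e$ and hence realizes $\ind(e, G) = 2$, giving $k(e, \mathcal F) = 2$.

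The main obstacle will be part (4) for $G$ in the extremal classes of part (2), i.e.\ $CP(k, n-k)$ and $S_{2,2}$-subdivisions, where $e_1 - e = 1$ is smallest. Establishing $e_3 \geq e + 5$ for these graphs cannot be done by chaining the bounds from parts (1)--(3) term by term, and instead requires explicitly tracking the degree sequence of $L(G)$ and then applying $(\star)$ once more to pin down $e_2(G) = e(G) + 5$.
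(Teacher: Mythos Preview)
Your proofs of parts (1)--(3) and (5) are correct and amount to the same argument as the paper's, just packaged more uniformly through the single identity~$(\star)$; the paper instead alternates between Theorem~\ref{avgdeg} (average degree) and Theorem~\ref{linetree} (tree identity), but these are equivalent to your formulation.

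For part (4), your case split on $e(G)-n(G)$ is a genuinely different route from the paper's, and it almost works, but your claim that ``a direct degree-sequence computation via $(\star)$ applied to $L(G)$ shows $e_2 = e + 5$'' for the extremal graphs of part~(2) is not correct as stated. For $G = CP(k,1)$ (cycle with a single pendant edge) and for $G$ a subdivision of $S_{2,2}$ along the middle edge only, the line graph $L(G)$ has exactly two vertices of degree~$3$ and the rest of degree~$2$; then $(\star)$ gives $2(e_2-e_1)=2\cdot 1 + 2\cdot 2 = 6$, so $e_2 = e_1 + 3 = e + 4$, not $e+5$. The repair is immediate: in every extremal case one gets $e_2 \geq e+4$, and then one more application of part~(1) to $L^2(G)$ (which contains a triangle, hence is not a $K_{1,3}$-subdivision) yields $e_3 \geq e_2 + 1 \geq e + 5$. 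So your strategy is sound once the exact value is relaxed to an inequality.

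By contrast, the paper's argument for part~(4) cases on $e_2(G)-e_1(G)$ rather than on $e(G)-n(G)$, and this buys a much shorter proof. If $e_2 \geq e_1 + 2$, then $e(L^2(G)) \geq n(L^2(G)) + 2$ and Lemma~\ref{prolific32} applied to $L^2(G)$ gives $e_3 \geq e_2 + 6 \geq e + 7$. If $e_2 = e_1 + 1$, then part~(2) applied to $H = L(G)$ forces $L(G)$ to be an $S_{2,2}$-subdivision or a $CP(k,n-k)$; but the only such graph that is a line graph is $CP(3,m)$, so $G$ must be a type~A subdivision of $K_{1,3}$, and Lemma~\ref{typeA} finishes. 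This avoids all the separate degree-sequence computations for $CP$ and $S_{2,2}$-subdivisions that your approach requires.
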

\begin{proof}
$\mbox{ }$\\
\begin{enumerate}
\item{Since $G$ is connected it follows that $L(G)$  is connected  and $e_1(G) \geq e(G) - 1$  since $ e(G) = n_1(G)$.  If  $e_1(G) = e(G) - 1 = n_1(G) - 1$,  it means  $L(G)$  is a tree and this is possible only if  $G$ is a path (as a vertex of degree $d \geq 3$ creates a clique $K_d$ in $L(G)$), but then $G$ is not prolific.  So  we may assume $e_1(G) = e(G) = n(G) $. Then we have  $d_1(G) = 2$,  and if  $d(G) = 2$ we get  $d_1(G) = 2( d(G) - 1)$ which is possible by Theorem \ref{avgdeg}  if and  only if  $G$ is 2-regular which is not the case here since $G$ is prolific.  Hence  $2 >  2( d(G) - 1)$   and  $d(G)  < 2$  and hence, since $G$ is connected, $n(G)= e( G) +1$ and $G$ is a tree.  However, by  Theorem \ref{linetree},  we know that  \[2e_1(G) - 2e( G)  =   - 2 + \sum_{j\geq3} (j-1)(j-2)x_j.\]  
Now it is clear that,  if $x_3 \geq 2$ or, for some $j \geq 4$,  $x_j \geq 1$, we then have \[\sum_{j \geq 3}(j-1)(j-2)x_j  - 2 > 0\]contradicting $e_1(G) = e(G)$.  Hence  $2e_1(G) - 2e(G) = 0$  if and only if  $G$ is a tree with $x_3 = 1$  and $x_j  = 0$  for $j \geq 4$, hence $G $ is a subdivision of $K_{1,3}$.}

\item{Now suppose  $e_1(G) = e(G) +1 = n_1(G) +1$.  
\begin{enumerate}
\item{If $e(G) = n(G)-1$ then $G$ is a tree, but is not a subdivision of $K_{1,3}$ by part 1.  If, for some $j \geq 4$, $x_j \geq 1$,  then we get  \[2e_1(G) - 2e( G)  =  - 2 + \sum_{j\geq3} (j-1)(j-2)x_j \geq 4\] and  hence $e(L(G)) \geq e(G) +2$.  If  $x_3 \geq 3$  then we get  \[2e_1(G) - 2e( G)  =  -2 + \sum_{j \geq 3}(j-1)(j-2)x_j   \geq 4\] hence $e_1(G)  \geq e(G) +2$.  Hence, since $x_3 = 1$ this would mean that $G$ is a subdivision of $K_{1,3}$, and the remaining possibility is  that $G$ is a tree with $x_3 =2$  containing exactly two vertices of degree 3 hence $G$ is the double star $S_{2,2}$ (6 vertices, 5 edges) or its subdivisions, and indeed for such trees $e_1(G) = e(G) +1$.   }
\item{If $e(G) = n(G)$ then $d(G) = 2$ and since $G$ is prolific it is not 2-regular and  hence it is a unicyclic graph  which is not a cycle. Furthermore, by the  assumption in part 2, we have  $e_1(G) = e(G) +1  =  n(G) +1  = n_1(G) +1$.  Now, applying Theorem 3.3  \[2  =  2(e_1(G) - e( G) )  =  \sum \deg(v_j)(\deg(v_j) - 2)\] and we proceed as per item 1 with $x_j $ being the number of vertices of degree $j$  in $G$:
\begin{itemize}
\item{$\sum_{j \geq 1} x_j = n$. }
\item{$\sum_{j \geq 1} jx_j =  2n$}
\end{itemize}
 and hence subtracting the two equations we get  $\sum_{j\geq 1}(j-2)x_j = 0 $.  Hence  $x_1  =  \sum_{j \geq 2}(j-2)x_j$.  So we have \[ 2  =  2(e_1(G)-e( G) )  =  \sum \deg(v_j)(\deg(v_j) - 2)  = \sum_{j \geq 1} j(j-2)x_j\]   and substituting the value of $x_1$ we finally obtain  \[1 =  e_1(G) - e( G)   =  \sum_{j \geq 1} \binom{j-1}{2}x_j.\]  

Now if  for some $j \geq 4$,  $x_j \geq 1$, the right side is at least   3.  If  $x_3 \geq 2$, the right side is at least 2.  So we are left with $x_3 = 1$.   Since $G$ is a prolific graph with $e(G) = n(G)$, it must be unicyclic (but not a cycle) having exactly one vertex of degree 3, hence it must be a cycle $C_k$, $n >  k   \geq   3$  with a path $P_{n-k}$ attached. } 
\item{If $e(G)  \geq n(G) +1$   then \[e_1(G) = e(G) +1   \geq  n(G) +2.\]  But then \[d(G)  = \frac{2e(G)}{n(G)} =  \frac{2n_1(G)}{n(G)}\]  and \[\frac{2e_1(G)}{n_1(G)}=   d_1(G) \geq 2(d(G) - 1) > d(G)  =    \frac{2n_1(G)}{n(G)}.\]  From this we deduce the following:
\[e(G)^2 -1  =  (e(G) +1)(e(G) -1) =  e_1(G)(e(G)- 1) \]\[ \geq e_1(G)n(G) > (n_1(G))^2  =  e(G)^2\] a contradiction.  Hence if $e(G) \geq n(G) +1$, the assumption   $ e_1(G) = e(G) +1 = n_1(G) +1$ cannot also hold.   }

  \end{enumerate}}

  \item{If $G$ is not a subdivision of $K_{1,3}$  then by part 1, $e_1(G) > e(G)$  and since $ L(G)$  is not a subdivision of $K_{1,3}$ we have, again by part 1, $e_2(G) > e_1(G) > e(G)$ and  hence $e_2(G) \geq e( G) +2$.  If $G$ is a subdivision of $K_{1,3}$  then by part 1, $e_1(G) =  e(G)$. But  $L(G)$ itself is not a subdivision of $K_{1,3}$  (as it contains  $K_3$ formed by the edges incident with the vertex of degree 3) nor a cycle,  hence by part 1, $e_2(G) > e_1(G)  = e(G)$ .  Now checking subdivisions of $K_{1,3}$  according to the three types of paths having the root at $v$ of degree 3, we have by Lemmas \ref{typeC} - \ref{typeA}
\begin{itemize}
\item{if $G$ is a type C subdivision, then $e_2(G) =  e_1(G) +3 = e(G) +3$.}
\item{if $G$ is a type  B subdivision, then $e_2(G) =  e_1(G) +2 = e(G) +2$.}
\item{if $G$ is a type A subdivision, then  $e_2(G) = e_1(G) +1 = e(G) +1$.}
\end{itemize}}   
\item{ If $e_2(G) \geq  e_1(G)+2$,  then, by Lemma \ref{prolific31},  we have \[e_3(G) \geq n_2(G) +6  = e_2(G) +6  \geq e_1(G) +8   \geq  e(G) +7.\]

So we assume $e_2(G)  = n_2(G) +1 = e_1(G) +1$, and we can use item 2 with $H =L(G)$  and  $L^2(G) = L(H)$.  But this implies that $H = L(G)$  is either a subdivision of $S_{2,2}$ or the graph $CP(k,n-k)$,  and of these graphs only  $CP(3,n-3)$ is a line graph of a type A subdivision of $K_{1,3}$, and now by Lemma \ref{typeA} we are done.    

Lastly, we assume $e_2(G) = n_2(G) = e_1(G)$ and  hence, by part 1, $L(G)$  is a  subdivision of $K_{1,3}$ which is impossible.   }
\item{Now $k(e,\mathcal F)=2$ follows directly from the steps above.}
\end{enumerate}
\end{proof}

 \subsection{The number of vertices n(G):  $k(n,\mathcal F)=4$}

\begin{thmx}
Suppose $G$ is a prolific graph on $n \geq 4$  vertices and $e(G)$ edges.
\begin{enumerate}
\item{If $e(G) \geq n +1$,  then $n_1(G ) >  n(G).$}
\item{If $e(G) = n$,  then $n_2(G) > n(G).$}
\item{If  $e(G) = n-1$,  then $G$ is a tree and
\begin{enumerate}
\item{if $x_j  >  0$ for some $j \geq 4$,  then $n_2(G)  > n(G)$.}
\item{if $x_3 \geq 3$,  then  $n_2(G)  > n(G)$.}
\item{if $x_3 = 2$,  then  $n_2(G) = n(G)$ but $n_3(G)  > n(G)$  and $G$ is $S_{2,2}$ or a subdivision of $S_{2,2}$.}
\item{if $x_3 = 1$  and $G$ is a type B or type C subdivision of $K_{1,3}$,  then $n_3(G)  > n(G)$.}
\item{if $x_3 = 1$ and $G$ is a type A subdivision of $K_{1,3}$, then  $n_4(G)  > n(G)$.}
\end{enumerate}}
\item{ $k(n,\mathcal F)=4$.}
\end{enumerate}

\end{thmx}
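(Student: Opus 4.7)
\medskip
\noindent
\textbf{Proof proposal.} The plan rests on the identity $n_k(G) = e_{k-1}(G)$ for $k \geq 1$, which converts every assertion about the number of vertices of an iterate into an assertion about the number of edges one step earlier, where Theorem A and Lemmas \ref{typeC}--\ref{typeA} apply directly. I would proceed by case analysis on $e(G) - n(G)$, which measures how cyclic $G$ is.

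Part 1 is immediate since $n_1(G) = e(G) \geq n(G) + 1$. For part 2, $e(G) = n(G)$ forces $G$ to be unicyclic, hence not a subdivision of $K_{1,3}$, so Theorem A part 1 gives $e_1(G) > e(G)$ and thus $n_2(G) > n(G)$. Part 3 is the tree case, where I would invoke Theorem \ref{linetree}: $2(e_1(G) - e(G)) = -2 + \sum_{j \geq 3}(j-1)(j-2) x_j$. If some $x_j > 0$ for $j \geq 4$ or if $x_3 \geq 3$, the right-hand side is at least $4$, so $e_1(G) \geq e(G) + 2$ and $n_2(G) > n(G)$, handling (a) and (b). For $x_3 = 2$ the right-hand side equals $2$, so $e_1(G) = n$; a degree count forces $G$ to be $S_{2,2}$ or one of its subdivisions, and since $L(G)$ then contains a triangle (produced by any degree-$3$ vertex of $G$), $L(G)$ is not a subdivision of $K_{1,3}$, so by Theorem A part 1 applied to $L(G)$, $e_2(G) > e_1(G) = n$, giving $n_3(G) > n(G)$ and handling (c). Finally, $x_3 = 1$ makes $G$ a subdivision of $K_{1,3}$: Lemmas \ref{typeC} and \ref{typeB} give $e_2(G) \geq e(G) + 2$, so $n_3(G) > n(G)$ for types C and B; for type A, Lemma \ref{typeA} gives only $e_2(G) = e(G) + 1 = n$, but $e_3(G) \geq e(G) + 4 > n$ then yields $n_4(G) > n(G)$, completing (d) and (e).

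Part 4 is a consolidation: the upper bound $k(n, \mathcal{F}) \leq 4$ is exactly the statement that parts 1--3 exhaust all prolific graphs with $n_k(G) > n(G)$ for some $k \leq 4$. For the matching lower bound $k(n, \mathcal{F}) \geq 4$, any type A subdivision of $K_{1,3}$ with $m_1 \geq 2$, $m_2 = m_3 = 1$ realises $\ind(n, G) = 4$: using $n_k(G) = e_{k-1}(G)$ together with Lemma \ref{typeA}, one verifies $n_1(G) = n_2(G) = n(G) - 1$ and $n_3(G) = n(G)$, so strict growth first appears at $n_4(G)$.

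The main obstacle is that the strict inequalities must fail at precisely the right step in the type A case to push the index up to $4$; this is exactly the equality regime of Lemma \ref{typeA}, and it is this single family of extremal graphs that separates the index of $n$ from the index of $e$. The other delicate point is subcase (c), where one must observe that $L(G)$ for $G$ a subdivided double star already contains a triangle and so cannot be a subdivision of $K_{1,3}$, thereby unlocking one further application of Theorem A part 1.
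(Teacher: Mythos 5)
Your proposal is correct, and its skeleton — the identity $n_k(G)=e_{k-1}(G)$, the case split on $e(G)-n(G)$, Theorem \ref{linetree} for the tree cases, and Lemmas \ref{typeC}--\ref{typeA} for the subdivisions of $K_{1,3}$ — is the same as the paper's. The difference lies in two subcases. For $e(G)=n(G)$ and for the subdivided double star in 3(c), the paper argues via the convexity bound of Theorem \ref{avgdeg}: since $G$ (respectively $L(G)$) is not regular, $d_1>2(d-1)$, and an explicit computation of $d_2>\tfrac{2(n+1)}{n-1}$ forces $e_2>n$. You instead invoke the equality characterization of Theorem A part 1: a unicyclic prolific graph is not a subdivision of $K_{1,3}$, so $e_1(G)>e(G)$; and $L(G)$ of a subdivided $S_{2,2}$ contains a triangle, so it is not a subdivision of $K_{1,3}$ and $e_2(G)>e_1(G)=n$. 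Since Theorem A part 1 was itself proved from Theorems \ref{avgdeg} and \ref{linetree}, your route is logically equivalent but operates one level higher: it buys a shorter, computation-free argument (no average-degree estimates needed) at the cost of having to check that the relevant line graphs are prolific and contain a triangle — which you do. Your justification of the lower bound $k(n,\mathcal F)\ge 4$ via the explicit sequence $n_1=n_2=n-1$, $n_3=n$, $n_4\ge n+3$ for a type A subdivision is also slightly more explicit than the paper's one-line appeal to "the steps above." No gaps.
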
  

\begin{proof}
$\mbox{  }$\\
\begin{enumerate}
\item{Clearly $n(L(G)) = e(G) \geq n +1  > n(G)$.}
\item{If $n = e(G)$ then $n_1(G)= e(G) = n = n(G)$  but since $G$ is prolific it is not a cycle and hence  by Theorem \ref{avgdeg} $d_1(G)  > 2$  and \[e_1(G) = \frac{d_1(G)n_1(G)}{2}  >  \frac{2n_1(G)}{2} = n_1(G) = n.\]  Hence $e_1(G) = n_2(G)> n(G)$.}     
\item{Since $G$ is a tree we use Theorem \ref{linetree},   $2e_1(G) - 2e(G) = - 2 + \sum_{j \geq 3}(j-1)(j-2)x_j$.  As $G$ is not a path, $x_j > 0$  for some $j \geq 3$.

\begin{enumerate}
\item{if $x_j > 0$ for some $j \geq 4$   then  \[2e_1(G) - 2e(G) =-2+ \sum_{j \geq 3}(j-1)(j-2)x_j  \geq  -2  +6  = 4,\] hence $e_1(G)-e(G) = n_2(G) -  n +1 \geq 2$.  And we get $n_2(G) \geq n +1> n(G)$   and we are done.  So we may assume $x_j  = 0$  for $j \geq 4$.}
\item{if $x_3 \geq 3$  then  \[2e_1(G) -2e(G) = - 2 +\sum_{j \geq 3}(j-1)(j-2)x_j  \geq  - 2  +6  = 4,\] hence $e_1(G)-e(G) = n_2(G) -  n +1 \geq 2$.  And we get $n_2(G) \geq n +1> n(G)$   and we are done.}
\item{if $x_3 = 2$  then $G$ is $S_{2,2}$ or obtained by subdivisions from $S_{2,2}$ and \[2e_1(G) -2e(G) =  - 2 +\sum_{j \geq 3}(j-1)(j-2)x_j  = - 2  +4  = 2,\] hence $e_1(G)-e(G) = n_2(G) -  n +1  = 1$.  And we get $n_2(G) =  n  =  n(G)$.  But then $e_2(G)  = \frac{d_2(G)n_2(G)}{2} =  \frac{d_2(G)n}{2}$.  Also  $d_2(G) >  2(d_1(G) - 1)$ as $L(G)$  is not regular.  Hence \[d_2(G)> 2(\frac{2e_1(G)}{n_1(G)}- 1)  = 2 ( \frac{2n}{n-1} - 1)  = \frac{2(n+1)}{n-1}.\]

Hence $n_3(G) = e_2(G) > \frac{2(n+1)n}{2(n-1)}  > n$  and $n_3(G) \geq n +1 >  n(G)$.}
\item{if $x_3 = 1$ then $G$ is a subdivision of $K_{1,3}$. 

If $G$ is a type C subdivision of $K_{1,3}$ then by Lemma \ref{typeC}, $n_3(G) = e_2(G) = e(G) +3  = n +2$ and we are done.  

 \medskip

If $G$ is a type B subdivision of $K_{1,3}$ then by Lemma \ref{typeB},  $n_3(G) = e_2(G) = e(G) +2  =  n +1$ and we are done.}
\item{if $G$ is a  type A subdivision of $K_{1,3}$ then by Lemma \ref{typeA}, $n_3(G) = e_2(G)  =  e(G) +1 = n$, but  $n_4(G) = e_3(G) \geq e(G) +4  \geq n +3$  and we are done.}

\end{enumerate}}
\item{ Now $k(n,\mathcal F)=4$ follows directly from the steps above.}
 
\end{enumerate}
\end{proof}

\subsection{The maximum degree $\Delta(G)$:  $k(\Delta,\mathcal F)=3$}

\begin{definition}
We say that a  connected graph $G$ is \emph{fine} if it contains an edge $e =uv$  such that  $\deg(u) + \deg(v) - 2  > \Delta(G)$.
\end{definition}

\begin{thmx} \label{maxdeg}

Let $G$ be a prolific graph with maximum degree $\Delta \geq 3$.  Then
\begin{enumerate}
\item{$\Delta_1(G) > \Delta(G)$ if and only if $G$ is a fine graph.}
\item{if $G$ is not a fine graph and  $\Delta \geq 4$,  then   $\Delta_2(G) > \Delta(G)$ unless $G= K_{1,4}$  where $\Delta_3(G)= 6 > \Delta_2(G) = \Delta(G) = 4>\Delta_1(G) = 3$.}
\item{if $G$ is not a fine graph and  $\Delta = 3$, then $\Delta_2(G) > \Delta(G)$  unless $G$ is either a type A subdivision of $K_{1,3}$  or a tree obtained  from $S_{2,2}$ by subdividing the middle edge at least twice.}
\item{$k(\Delta,\mathcal F)=3$.}
\end{enumerate}
\end{thmx}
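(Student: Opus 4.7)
The linchpin is the edge-degree formula for line graphs: for an edge $e=uv$, we have $\deg_{L(G)}(e)=\deg_G(u)+\deg_G(v)-2$. Consequently $\Delta_1(G)>\Delta(G)$ if and only if some edge $uv$ satisfies $\deg(u)+\deg(v)-2>\Delta(G)$, which is the definition of a fine graph; this settles Part~1. Throughout Parts~2 and~3 I may therefore assume $G$ is not fine, so every edge satisfies $\deg(u)+\deg(v)\leq\Delta+2$; in particular, every neighbor of a vertex of degree $\Delta$ has degree at most $2$.

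For Part~2, fix $v$ with $\deg(v)=\Delta\geq 4$ and split on whether some neighbor of $v$ has degree $2$. If every neighbor of $v$ is a leaf, then by connectedness $G=K_{1,\Delta}$, so $L(G)=K_\Delta$ and $L^2(G)=L(K_\Delta)$ is $2(\Delta-2)$-regular; this exceeds $\Delta$ exactly when $\Delta\geq 5$, which is why $K_{1,4}$ is exceptional and must be handled separately (its $L^2$ is the octahedron, so $\Delta_3(K_{1,4})=6$). If instead some neighbor $u$ of $v$ has $\deg(u)=2$, the edge $uv$ has $L(G)$-degree $\Delta$ and any other edge $vw$ at $v$ has $L(G)$-degree at least $\Delta-1$; since $uv$ and $vw$ share $v$, the corresponding edge of $L^2(G)$ has degree at least $2\Delta-3$, which is strictly greater than $\Delta$ for $\Delta\geq 4$.

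Part~3 ($\Delta=3$, $G$ not fine) is the delicate heart of the proof. Non-fineness already forbids two adjacent degree-$3$ vertices. Applying the same edge-sum analysis one level higher, $\Delta_2(G)=3$ requires that no two adjacent edges of $L(G)$ both have $L$-degree $3$. The $L$-degree-$3$ edges of $L(G)$ are exactly the $G$-edges joining a degree-$3$ vertex to a degree-$2$ vertex, and two such edges share a $G$-vertex only in one of two patterns: (A) a degree-$3$ vertex with two degree-$2$ neighbors, or (B) a degree-$2$ vertex whose two neighbors both have degree $3$. Ruling out~(A) forces every degree-$3$ vertex to have at most one degree-$2$ neighbor, and, combined with non-fineness, prolificness, and connectedness (so $G\neq K_{1,3}$), to have exactly two leaves and one degree-$2$ neighbor. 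Ruling out~(B) forces any two degree-$3$ vertices to lie at distance at least~$3$. I will then argue that the induced subgraph $H$ of $G$ on non-leaf vertices has maximum degree at most $2$, is acyclic (otherwise $G$ would be a cycle, contradicting prolificness) and is connected (leaves cannot glue components of $H$), hence is a single path; its endpoints are either degree-$3$ vertices of $G$ or degree-$2$ vertices of $G$ carrying a leaf. The only configurations compatible with $G$ being prolific are then: one degree-$3$ endpoint (yielding a type~A subdivision of $K_{1,3}$), or two degree-$3$ endpoints (yielding $S_{2,2}$ with its middle edge subdivided at least twice, the distance condition forcing ``at least twice''). This structural case analysis is the main obstacle.

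Part~4 follows by verifying $\Delta_3(G)>\Delta(G)$ on each exceptional graph. The $K_{1,4}$ case was handled in Part~2. For a type~A subdivision of $K_{1,3}$, the graph $L^2(G)$ consists of two triangles sharing an edge, with a pendant path attached; the shared edge, viewed as a vertex of $L^3(G)$, has degree $3+3-2=4>3$. The same book configuration appears at each end of the middle path in the $S_{2,2}$-subdivision case, giving $\Delta_3\geq 4$. Combining Parts~1--3 gives $\ind(\Delta,G)\leq 3$ for every prolific $G$, and the exceptional families realize equality, so $k(\Delta,\mathcal F)=3$.
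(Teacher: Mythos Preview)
Your proof is correct and follows essentially the same approach as the paper: the edge-degree formula for Part~1, the split on whether a maximum-degree vertex has a degree-$2$ neighbour for Part~2, and the structural reduction to type~A subdivisions of $K_{1,3}$ and subdivided $S_{2,2}$ for Part~3. Your organization of Part~3 is slightly cleaner than the paper's---you isolate the two obstruction patterns (A) and (B) up front and then argue via the non-leaf induced subgraph being a path, whereas the paper handles pattern~(A) first, deduces that $G$ is a tree by a cycle argument, and only later treats the distance-$2$ case (your pattern~(B)); but the underlying ideas and the resulting characterization are identical.
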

\begin{proof}
$\mbox{  }$\\
\begin{enumerate}

\item{This follows from the fact that \[\Delta_1(G) =  \max \{ \deg( u)  + \deg(v) -2 \mbox{, } uv \in E(G)\}.\]}
\item{We assume  $\Delta \geq 4$.  Let $v$ be a vertex of maximum degree $\Delta$.   Then all the neighbours of $v$ are of degree 1 or 2 otherwise $G$ is fine.   

If all neighbours of $v$ have degree 1, then $G$  is the star $K_{1, \Delta}$ and  $L(G) =  K_{\Delta}$ which is regular of degree  $\Delta-1$ and hence $\Delta_2(G) = 2\Delta- 4 > \Delta$ for $\Delta >4$.

So we need to consider $G =K_{1,4}$ with $L(G) = K_4$, $L^2(G) = K_{2,2,2}$ where $\Delta_2(G)  = 4$  and $\Delta_3(G)= 6 > \Delta(G)$.

If at least one neighbour of $v$ has degree 2, let us consider the vertices in $L(G)$  formed by the $\Delta$ edges incident with $v$.  These edges form $K_{\Delta}$ in $L(G)$ and since at least one of these  edges is incident with a vertex of degree 2 in $G$, it must have degree $\Delta$ in $L(G)$.

Consider an edge $e^*$  in this $K_{\Delta}$ incident with the vertex of degree $\Delta$ and another vertex of degree at least $\Delta -1$.  The vertex $e^*$ in $L^2(G)$ corresponding to the edge $e^*$ in $L(G)$  has degree  $2\Delta -3  > \Delta$ for $\Delta \geq 4$ and we are done.    }
\item{Suppose $\Delta(G) = 3$, recall  that we may assume that  the vertices of degree 3  are non-adjacent as all their neighbours are of degree 1 or 2 (by item 1), and at least one neighbour is of degree 2 (otherwise $G = K_{1,3 }$).

Suppose $\deg(u)  = 3$ and $v$, $w$ are neighbours of $u$ of degree 2 and $z$ is the third neighbour.  Then, in $L(G)$, the vertices representing the edges  $uv$ and $uw$  are adjacent vertices of degree 3  and hence the edge between them has degree 4 in $L^2(G)$.  So in this case $\Delta_2(G)=4> \Delta_1(G)=\Delta(G)=3$.

So we consider the case in which all vertices of degree 3 have exactly one neighbor of degree 2 and two leaves. This forces $G$ to be a tree  because consider a cycle in $G$.  Since $G$ is prolific there must be a vertex not on the cycle adjacent to a vertex $v$  on the cycle.  But then $v$ is of degree  3 and is adjacent to two vertices of degree 2 on the cycle, a contradiction.

Hence either $G$ is a tree with exactly one vertex of degree 3 with two leaves and a path on at least two edges starting from $u$, which is a type A subdivision of $K_{1,3}$ or $G$ is a tree having exactly two vertices of degree 3,  $u$ and $v$, with distance between these vertices at least 2.  Observe $G$ cannot have three such vertices of degree 3.

If  $G$ is a type A subdivision of $K_{1,3}$  we have   $\Delta_2(G) = \Delta_1(G) = 3$,  while $\Delta_3(G)= 4$.

Otherwise if G has two vertices $u$, $v$ of degree 3  with distance between these vertices exactly 2 then $\Delta_1(G) = 3$, but $L(G)$  contains two adjacent vertices of degree 3 hence $\Delta_2(G)  = 4$.  

On the other hand  if the distance between these vertices is at least 3  then $\Delta_2(G) = \Delta_1(G) = 3$ but $L^2(G)$ contains adjacent vertices of degree 3 hence $\Delta_3(G) = 4$.

Lastly, since in this case the vertices of degree 3 can have only one adjacent vertex of degree 2 it must be that $G$ is obtained from $S_{2,2}$ by subdividing the middle edge at least twice.}
\item{Now $k(\Delta,\mathcal F)=3$ follows directly from the steps above.}
\end{enumerate}
\end{proof}

\subsection{The minimum degree $\delta(G)$:   $k(\delta,\mathcal F)=\infty$ if $\delta=1,2$, otherwise $k(\delta,\mathcal F)=1$}

\begin{thmx}

Let $G$ be a prolific graph on $n \geq 4$ vertices. Then
\begin{enumerate}
\item{Let $F=\{G: \delta(G) \in \{1,2\}\}$.  Then  $k(\delta,\mathcal F)=\infty$.}
\item{Let $F=\{G: \delta(G)\geq 3\}$.  Then $k(\delta,\mathcal F)=1$.}
\end{enumerate}
\end{thmx}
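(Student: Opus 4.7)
The two parts of the statement concern disjoint sub-families and are treated by essentially different arguments.

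\emph{Part 2 ($\delta \geq 3$).} This is a one-line observation. For any edge $uv \in E(G)$, the corresponding vertex of $L(G)$ has degree $\deg_G(u) + \deg_G(v) - 2$, so $\delta(L(G)) \geq 2\delta(G) - 2$. The hypothesis $\delta(G) \geq 3$ forces $2\delta(G) - 2 \geq \delta(G) + 1 > \delta(G)$, whence $\delta(L(G)) > \delta(G)$ and $\ind(\delta,G) = 1$. Combined with the trivial lower bound $\ind(\delta,G) \geq 1$ (since $r = 0$ can never satisfy $P(G) < P(L^0(G)) = P(G)$), the family index is exactly $1$.

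\emph{Part 1 ($\delta \in \{1,2\}$).} To show $k(\delta,\mathcal F) = \infty$ on this family it suffices to exhibit an infinite sequence of prolific graphs whose indices grow without bound. I plan to use $G_N := CP(3,N)$ from Section~3, the triangle with a pendant path on $N$ edges. Each $G_N$ is prolific, and its leaf gives $\delta(G_N) = 1$. The goal is to prove $\delta(L^k(G_N)) = 1$ for every $0 \leq k \leq N-1$, which yields $\ind(\delta, G_N) \geq N$ and hence $k(\delta, \mathcal F) = \infty$.

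\emph{Key structural lemma.} The whole of Part 1 rests on the observation that a pendant path shrinks by exactly one under the line-graph operation. Precisely, if $H$ contains a pendant path $v_0 v_1 \cdots v_m$ with $v_m$ a leaf, $\deg_H(v_i) = 2$ for $1 \leq i \leq m-1$, and $\deg_H(v_0) \geq 3$, then the edges $v_0 v_1, \ldots, v_{m-1}v_m$ form a pendant path of length $m - 1$ in $L(H)$ whose anchor has degree $\deg_H(v_0) \geq 3$. This is just degree arithmetic in $L(H)$: the endpoint $v_{m-1}v_m$ has degree $(2-1)+(1-1)=1$, each internal vertex $v_{i-1}v_i$ with $2 \leq i \leq m-1$ has degree $(2-1)+(2-1)=2$, and the new anchor $v_0v_1$ has degree $(\deg_H v_0 - 1)+(2-1) = \deg_H v_0 \geq 3$. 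Iterating the lemma starting from the pendant path of length $N$ in $G_N$ (anchored at a triangle vertex of degree $3$) produces a pendant path of length $N - k$ in $L^k(G_N)$ for every $k \leq N-1$, hence a leaf, hence $\delta(L^k(G_N)) = 1$.

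I foresee no substantive obstacle; the only mild bookkeeping is verifying that the anchor of the shrunk pendant path retains degree at least $3$ from one iteration to the next, which is automatic from the degree formula above, since the new anchor's degree in $L(H)$ equals the old anchor's degree in $H$.
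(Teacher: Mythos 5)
Your proof is correct and follows essentially the same route as the paper: part 2 is the identical degree computation $\delta_1(G) \geq 2\delta(G) - 2 > \delta(G)$, and for part 1 the paper likewise exhibits prolific graphs containing an arbitrarily long pendant path whose leaf survives one fewer iteration each time (your $CP(3,N)$ with the explicit path-shrinking lemma is a cleaner, fully justified version of this). The only difference is that the paper additionally sketches $\delta=2$ witnesses (a long internal path between two vertices of higher degree, whose degree-$2$ vertices survive $\lceil k/2\rceil$ iterations), which establishes the slightly stronger fact that the index is unbounded even on the subfamily with $\delta=2$; your $\delta=1$ examples already suffice for the family $\{G:\delta(G)\in\{1,2\}\}$ as the theorem states it.
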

\begin{proof}
$\mbox{ }$\\
\begin{enumerate}
\item{For $\delta=1$, consider the  prolific graph $G$ which contains an arbitrary long path say  of length $k$, then we need  $k$ iterations of the line graph before the vertex of degree 1 disappears. 

For $\delta=2$  consider the prolific graph  $G$ which contains two vertices $u$  and $v$  joined by a  path of length $k$ say, and   we need  $ \lceil \frac{k}{2} \rceil $ iterations of the line graph before the vertices of degree 2 of this path disappear.}
\item{For $\delta \geq 3$ we clearly have $\delta_1(G) \geq 2\delta(G) -2 > \delta(G)$.}
\end{enumerate}
\end{proof}

\subsection{The average degree $d(G)$:  $k(d,\mathcal F)=1$}

\begin{thmx}

Let $G$ be a prolific graph on $n \geq 4$ vertices.  Then $d_1(G) >d(G)$, that is  $k(d,\mathcal F)=1$.

\end{thmx}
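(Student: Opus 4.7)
The plan is to split into cases depending on whether $G$ is regular and whether $G$ is a tree, using Theorem \ref{avgdeg} as the main lever and Theorem \ref{linetree} to clean up the tree case.

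First I would observe that Theorem \ref{avgdeg} gives $d_1(G)\geq 2(d(G)-1)$, with equality iff $G$ is regular. The inequality $2(d-1)\geq d$ holds precisely when $d\geq 2$, so this single inequality will prove $d_1(G) > d(G)$ whenever $G$ is non-regular and $d(G)\geq 2$. Thus the work reduces to (i) the regular case and (ii) the case $d(G)<2$, which for a connected graph means $G$ is a tree.

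For the regular case, since $G$ is prolific it cannot be a cycle nor $K_{1,3}$ nor a disjoint union of $K_2$'s, so if $G$ is $k$-regular then $k\geq 3$. Then $d_1(G)=2(k-1)=2k-2 > k = d(G)$ for $k\geq 3$, as required. For the non-regular, non-tree case, $G$ is connected with $e(G)\geq n(G)$, so $d(G)\geq 2$, and strict inequality in Theorem \ref{avgdeg} together with $2(d(G)-1)\geq d(G)$ gives $d_1(G) > 2(d(G)-1)\geq d(G)$.

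The main obstacle is the tree case, which is not covered by Theorem \ref{avgdeg} alone because $d(G)=2(n-1)/n<2$. Here I would invoke Theorem \ref{linetree}: since $G$ is a prolific tree, it is neither a path nor $K_{1,3}$, and
\[
2e_1(G)-2e(G) \;=\; -2 + \sum_{j\geq 3}(j-1)(j-2)x_j.
\]
If $G$ is a (non-trivial) subdivision of $K_{1,3}$, then $x_3=1$ and $x_j=0$ for $j\geq 4$, giving $e_1(G)=e(G)=n-1$; combined with $n_1(G)=e(G)=n-1$, this yields $d_1(G)=2>2(n-1)/n=d(G)$. Otherwise $G$ has either $x_3\geq 2$ or some $x_j\geq 1$ for $j\geq 4$, in both cases making $\sum_{j\geq 3}(j-1)(j-2)x_j\geq 4$, hence $e_1(G)\geq e(G)+1=n$. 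Then
\[
d_1(G) \;=\; \frac{2e_1(G)}{n_1(G)} \;\geq\; \frac{2n}{n-1} \;>\; 2 \;>\; d(G),
\]
which finishes this case and hence the theorem, showing $k(d,\mathcal F)=1$.
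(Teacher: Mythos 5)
Your proof is correct and follows essentially the same route as the paper: Theorem \ref{avgdeg} does all the work whenever $d(G)\geq 2$ (covering both the regular and non-regular non-tree cases exactly as you argue), and trees are treated separately. The only difference is that your tree case detours through Theorem \ref{linetree} and a degree-sequence analysis, whereas the paper simply notes that $L(G)$ of a prolific tree is connected and contains a cycle (a vertex of degree at least $3$ gives a triangle in $L(G)$), so $e_1(G)\geq n_1(G)$ and hence $d_1(G)\geq 2 > d(G)$ in one line.
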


\begin{proof}
By Theorem \ref{avgdeg},  $d_1(G) \geq  2(d(G) - 1)$ with equality if and only if $G$ is  regular.  Since $G$ is prolific, it is either a tree  with $d(G) =  2 - 2/n$ or non-regular with $d(G)  \geq 2$.

If $G$ is a tree,  $L(G)$ is connected and contains a cycle hence $d_1(G) \geq 2 > d(G)$.  In the case $d(G) \geq 2$  and $G$ is not 2-regular the result follows directly from Theorem \ref{avgdeg}.
\end{proof}

\subsection{The longest cycle $c(G)$ in $G$:   $k(c,\mathcal F)=1$}

\begin{thmx}

Let $G$ be a prolific graph on $n \geq 4$ vertices.  Then $c_1(G) >c(G)$,  that is $k(c,\mathcal F)=1$.

\end{thmx}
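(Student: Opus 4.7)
The plan is to separately handle the cases where $G$ is acyclic and where $G$ contains a cycle, and in each case exhibit a concrete cycle in $L(G)$ that is strictly longer than $c(G)$.

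First, I would treat the trivial case where $G$ is a tree, so $c(G) = 0$. Since $G$ is prolific, it is not a path, so it contains a vertex $v$ of degree at least $3$. The three edges incident to $v$ form a triangle in $L(G)$, giving $c_1(G) \geq 3 > 0 = c(G)$.

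For the main case, assume $G$ contains a cycle and let $C = v_1 v_2 \cdots v_c v_1$ be a longest cycle of $G$, with edges $e_i = v_i v_{i+1}$ (indices mod $c$). The edges $e_1, \ldots, e_c$ form a cycle of length $c$ in $L(G)$, so $c_1(G) \geq c(G)$; the task is to upgrade this to a strict inequality. If every vertex of $C$ had degree exactly $2$ in $G$, then by connectedness of $G$ we would have $G = C_c$, contradicting prolificity. Hence there exists some $v_i$ with an incident edge $f = v_i w$ outside $E(C)$. In $L(G)$, the vertex $f$ is adjacent to both $e_{i-1}$ and $e_i$ (all three edges meet at $v_i$), so the sequence
\[
e_1, e_2, \ldots, e_{i-1}, f, e_i, e_{i+1}, \ldots, e_c, e_1
\]
is a closed walk in $L(G)$ whose $c+1$ vertices are pairwise distinct (since $f \neq e_j$ for all $j$), and consecutive ones share a common endpoint in $G$. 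Hence it is a cycle of length $c+1$ in $L(G)$, so $c_1(G) \geq c(G) + 1 > c(G)$.

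The combination of the two cases yields the theorem, and hence $k(c,\mathcal F) = 1$. There is no serious obstacle here: the only point that requires a moment's care is checking that the inserted detour through $f$ produces a genuine cycle (no repeated vertex in $L(G)$), which follows because $f$ is not an edge of $C$. The statement that one iteration always strictly increases $c$ is tight, because if $G$ is itself a Hamiltonian cycle with one extra pendant path it takes several iterations to grow the longest cycle further, but one is already enough to beat $c(G)$.
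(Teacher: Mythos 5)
Your proof is correct. The core mechanism is the same as the paper's — lift the longest cycle $C$ to a cycle of edge-vertices in $L(G)$ and insert one extra edge-vertex incident to a cycle vertex — but your case analysis is genuinely different and a little cleaner. The paper splits into three cases: $G$ Hamiltonian (handled by a separate argument, namely that $L(G)$ is then Hamiltonian and a prolific Hamiltonian graph has $e(G)>n(G)$, so $c_1(G)=n_1(G)>n(G)=c(G)$), $G$ a tree, and $G$ neither; in the third case it specifically uses a vertex $w$ \emph{off} the longest cycle adjacent to a vertex on it, which is why the Hamiltonian case must be peeled off first. You instead only ask for an edge $f$ incident to a cycle vertex with $f\notin E(C)$, which exists whenever $G\neq C$ by prolificity, and which covers chords as well; this absorbs the Hamiltonian case into the general one and reduces the proof to two cases (tree / not a tree). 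The small price is that you must verify the inserted vertex $f$ is distinct from all $e_j$, which you do correctly ($f\notin E(C)$), and that $G$ having all cycle vertices of degree $2$ forces $G=C_c$ by connectedness, which is also correct. Your closing remark about tightness is harmless but unnecessary: since $\ind(P,G)\geq 1$ by definition, showing one iteration always suffices already gives $k(c,\mathcal F)=1$.
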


\begin{proof}
If $G$ is Hamiltonian then $L(G)$  is Hamiltonian  and since $G$ is prolific it follows that $e(G) > n(G)$  which implies that $n_1G) > n(G)$ and hence $c_1(G) > c(G)$.

If $G$ is a tree then clearly $G$ has no cycle but as $G$ is prolific it must contains a vertex of degree at least 3, and hence $L(G)$  contains a cycle and we are done.

So assume $G$ is such that  $c(G) =  q$, $3 \leq q \leq n-1$.  Since $G$ is connected there is a vertex $w$ not on the longest cycle but adjacent to a vertex $v$  on the longest cycle with  edges $xv$ and $yv$ being the edges  incident to $v$ on the longest cycle.  Then in $L(G)$  the longest cycle (now the edges of $G$ are represented by vertices in $L(G)$) is extended  by  a vertex representing the edge $wv$ and  hence $c_1(G)   > c(G)$.  
\end{proof}  

\subsection{The matching number $\mu(G)$:   $k(\mu,\mathcal F)=4$}

 
We first need a definition.
\begin{definition}
A graph $G$ on $n$ vertices is said to have a 1-factor/near 1-factor  if  $\mu(G) = \left \lfloor \frac{ n}{2 } \right \rfloor$.
\end{definition}

Hence referring to Theorem \ref{matching}, we can say that if $G$ is a connected  $K_{1,3}$-free graph, then $G$ has a 1-factor/near 1-factor accordingly with $n \equiv 0,1 \pmod 2$.

Also, for a connected graph  $G$, $\mu_1 =  \left \lfloor \frac{ e(G)}{2 } \right \rfloor$.  This follows directly from Theorem \ref{matching} since $n_1 = e(G)$  and line graphs are in particular $K_{1,3}$-free graphs.

First we need several lemmas dealing with the  index of the matching number $\mu$ and the  cases where  $G$ has $n$ vertices and $m$ edges where $m \geq n +2$,  $m = n +1$, $m  = n$  and $m = n-1$.

\begin{lemma}

Let $G$  be a prolific graph on $n$ vertices and $m$ edges such that $m \geq n +2$.  Then  $\mu_1 \geq \mu+1$  with equality if and only if $n \equiv 1\pmod 2$, $m = n +2$  and $G$ has a near 1-factor,  or $n \equiv 0\pmod 2$  and $m \in  \{ n +2,n+3\}$ and $G$ has a 1-factor.
\end{lemma}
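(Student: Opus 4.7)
The plan is to reduce the lemma to a simple comparison between $\mu$ and $\mu_1$ using two standard inputs, and then carry out a short parity analysis to extract the equality characterisation.

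First, I would recall from Corollary \ref{ind_match} that for any connected graph $G$ one has $\mu_1(G) = \lfloor m/2 \rfloor$, where $m = e(G)$, while any matching in $G$ contains at most $\lfloor n/2 \rfloor$ edges, so $\mu(G) \leq \lfloor n/2 \rfloor$. Combining these two facts with the hypothesis $m \geq n+2$ yields
\[
\mu_1 \;=\; \left\lfloor \frac{m}{2} \right\rfloor \;\geq\; \left\lfloor \frac{n+2}{2} \right\rfloor \;=\; \left\lfloor \frac{n}{2} \right\rfloor + 1 \;\geq\; \mu + 1,
\]
which is the desired inequality.

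For the equality characterisation I would split on the parity of $n$. If $n$ is odd, then $\mu \leq (n-1)/2$, and since $\mu$ is an integer, $\mu < (n-1)/2$ forces $\mu \leq (n-3)/2$, in which case $\mu_1 \geq \lfloor (n+2)/2 \rfloor = (n+1)/2 \geq \mu + 2$. So equality demands $\mu = (n-1)/2$, i.e., $G$ has a near 1-factor; equality further requires $\lfloor m/2 \rfloor = (n+1)/2$, which combined with $m \geq n+2$ pins down $m = n+2$. If $n$ is even, then $\mu \leq n/2$ and the analogous reasoning shows equality forces $\mu = n/2$ (a 1-factor) together with $\lfloor m/2 \rfloor = n/2 + 1$, which in view of $m \geq n+2$ is equivalent to $m \in \{n+2, n+3\}$. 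Conversely, plugging any of the listed configurations into $\lfloor m/2 \rfloor - \mu$ yields exactly $1$, confirming equality.

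The main effort here is simply careful bookkeeping with the floor function; there is no real combinatorial obstacle. Notably, none of the deeper edge-growth estimates from Lemma \ref{prolific31} or Lemma \ref{prolific32} are needed for this lemma: the argument rests entirely on Sumner's theorem via Corollary \ref{ind_match}, together with the trivial matching bound $\mu \leq \lfloor n/2 \rfloor$. The only place where one must be mildly attentive is in verifying that a strict deficit in $\mu$ below its natural ceiling already produces $\mu_1 \geq \mu + 2$, so that the equality case is narrowly pinned down to the configurations described in the statement.
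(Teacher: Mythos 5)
Your proof is correct and follows essentially the same route as the paper: both rest on $\mu_1=\lfloor m/2\rfloor$ (Sumner's theorem via Corollary \ref{ind_match}), the trivial bound $\mu\leq\lfloor n/2\rfloor$, and a parity case analysis to pin down the equality configurations. The paper merely organises the bookkeeping as an explicit four-way split on the parities of $n$ and on $m=n+2$ versus $m\geq n+3$, whereas you compress it slightly; the content is identical.
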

 
\begin{proof}
If $n \equiv 1 \pmod 2$ and $m \geq n +3$, then  $n_1 \geq n +3 \equiv 0 \pmod 2$  and $\mu_1  \geq \frac{n+3}{2}   = \frac{n-1}{2} +2   \geq  \mu(G) +2$.  

If $n \equiv 1 \pmod 2$ and $m =n +2$, then  $n_1 = n +2 \equiv 1 \pmod 2$  and $\mu_1  =\frac{n+1}{2}   = \frac{n-1}{2} +1  \geq  \mu(G) +1$, where the last inequality holds as equality  if and only if $G$ has near 1-factor.

If $n \equiv 0\pmod 2$ and $m \geq n +3$, then  $n_1 \geq n +3 \equiv 1 \pmod 2$  and $\mu_1  \geq \frac{n+2}{2}   = \frac{n}{2} +1  \geq  \mu(G) +1$, where the last inequality holds as equality  if and only if $G$ has a 1-factor and $m=n+3$.

If $n \equiv 0 \pmod 2$ and $m =n +2$, then  $n_1 = n +2 \equiv 0 \pmod 2$  and $\mu_1  =\frac{n+2}{2}   = \frac{n}{2} +1  \geq  \mu(G) +1$, where the last inequality holds as equality  if and only if $G$ has a 1-factor.

\end{proof}

\begin{lemma}

Let $G$  be a prolific graph on $n$ vertices,  $m$ edges such that $m = n +1$.  Then
\begin{enumerate}
\item{if $n \equiv 1 \pmod 2$, then $\mu_1 \geq \mu+1$ with equality  if and only if  $G$ has a near 1-factor.}
\item{if $n \equiv 0 \pmod 2$, then $\mu_1 \geq \mu(G) +1$  unless $G$ has 1-factor in which case  $\mu_1 = \mu(G)$ but  $\mu_2  \geq \mu +1$.}
\end{enumerate}
\end{lemma}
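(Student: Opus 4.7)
The plan is to use the formula $\mu_1(G) = \lfloor e(G)/2 \rfloor$, which holds for any connected graph (as noted just before the lemma, since $L(G)$ is $K_{1,3}$-free and Sumner's theorem gives a near-1-factor in any connected $K_{1,3}$-free graph). Combined with the trivial bound $\mu(G) \leq \lfloor n/2 \rfloor$, and the hypothesis $m = n+1$, this reduces everything to a parity calculation plus one appeal to Lemma \ref{prolific31}.

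First I would treat the odd case. With $n \equiv 1 \pmod 2$ and $m = n+1$ we have $\mu_1 = \lfloor (n+1)/2 \rfloor = (n+1)/2$, and $\mu(G) \leq (n-1)/2$. Therefore
\[
\mu_1 - \mu(G) \;\geq\; \frac{n+1}{2} - \frac{n-1}{2} \;=\; 1,
\]
with equality precisely when $\mu(G) = (n-1)/2$, which by definition means $G$ has a near 1-factor. This settles part (1).

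Next I would handle the even case. With $n \equiv 0 \pmod 2$ and $m = n+1$ we have $\mu_1 = \lfloor (n+1)/2 \rfloor = n/2$, and $\mu(G) \leq n/2$. If $G$ has no 1-factor then $\mu(G) \leq n/2 - 1$, so $\mu_1 \geq \mu(G) + 1$. If $G$ does have a 1-factor then $\mu(G) = n/2 = \mu_1$, so the index jumps to the next iteration and we must show $\mu_2 \geq \mu(G) + 1 = n/2 + 1$.

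For this last step, which is the only place where real work is needed, I would invoke Lemma \ref{prolific31}(2): since $e(G) = n+1$, we have $e_1(G) \geq n+4$. Consequently
\[
\mu_2 \;=\; \left\lfloor \frac{e_1(G)}{2} \right\rfloor \;\geq\; \left\lfloor \frac{n+4}{2} \right\rfloor \;=\; \frac{n}{2} + 2 \;>\; \mu(G) + 1,
\]
as required. The main obstacle is simply keeping the parities and floor functions straight, and recognizing that the jump needed when $n$ is even and $G$ has a 1-factor is already supplied by the earlier edge-growth bound, so no new combinatorial argument about matchings is required here.
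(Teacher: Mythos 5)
Your proof is correct and follows essentially the same route as the paper: the parity bookkeeping via $\mu_1=\lfloor e(G)/2\rfloor$ and $\mu\le\lfloor n/2\rfloor$ is identical, and the only nontrivial case ($n$ even, $G$ with a 1-factor) is resolved by an edge-growth bound. The paper obtains $e_1\ge n+3$ directly from the average-degree inequality of Theorem \ref{avgdeg}, whereas you cite Lemma \ref{prolific31}(2) to get the slightly stronger $e_1\ge n+4$; both are already established in the paper and either suffices.
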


\begin{proof}
$\mbox{ }$\\
\begin{enumerate}
\item{Suppose $n \equiv 1 \pmod 2$.  Clearly  $n_1 =  m = n +1 \equiv 0 \pmod 2$,  hence $\mu =\frac{n+1}{2}   = \frac{n-1}{2} +1  \geq \mu(G) +1$  where the last inequality holds as equality if and only if $G$ has a near 1-factor.}
\item{Suppose $n \equiv 0 \pmod 2$.  Clearly $n_1 = m \equiv 1 \pmod 2$,  hence $\mu_1 = \frac{n}{2}$.  If $\mu < \frac{n}{2}$ we are done,  otherwise $\mu(G) = \frac{n}{2}$ and $G$ has 1-factor.

By Theorem \ref{avgdeg},  $d_1 \geq 2(d - 1) = 2( \frac{2m}{n} -1)  = 2( \frac{2(n+1)}{n} -1 )  = \frac{2(n+2)}{n}$. Hence $e_1 = \frac{d_1n_1}{2}  \geq  \frac{2(n+1)(n+2)}{2n} > n +2$. Hence  $n_2  = e_1 \geq n+3$ and we deduce that $\mu_2  \geq \frac{ n+2}{2} = \frac{n}{2} +1 > \mu$.}
\end{enumerate}
\end{proof}

\begin{lemma}
Let $G$  be a prolific graph on $n$ vertices and $m$ edges such that $m = n$.  Then
\begin{enumerate}
\item{if $n \equiv 1 \pmod 2$, then $\mu_1 =\frac{n-1}{2} \geq \mu$ with the last inequality holding as equality  if and only if  $G$ has a near 1-factor, and then $\mu_2>\mu_1=\mu$.}
\item{if $n \equiv 0 \pmod 2$ and $G$ has no 1-factor, then $\mu_1 =\frac{n}{2} > \mu(G) $.}
\item{if $n \equiv 0 \pmod 2$ and $G$ has a 1-factor, then $\mu_2 > \mu_1 =\mu =\frac{n}{2}$  unless $G$ is the graph $CP(k,n-k)$ in which case  $\mu_3 >\mu_2= \mu_1=\mu=\frac{n}{2}$.}

\end{enumerate}
\end{lemma}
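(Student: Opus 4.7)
The plan is to exploit the identity $\mu_1(G) = \lfloor e(G)/2 \rfloor$, which follows from Sumner's theorem (Theorem~\ref{matching}) applied to the $K_{1,3}$-free graph $L(G)$ with $n_1 = e(G)$. Under the hypothesis $m = n$, this gives $\mu_1 = \lfloor n/2 \rfloor$, and the three cases then reduce to comparing this value with $\mu(G) \leq \lfloor n/2 \rfloor$ and invoking Lemma~\ref{prolific31}(3) to control the next iterate $e_1(G)$.

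For Part~1 ($n$ odd), I would observe $\mu_1 = (n-1)/2 \geq \mu$ with equality precisely when $G$ has a near $1$-factor; whenever equality holds, Lemma~\ref{prolific31}(3) forces $e_1 \geq n+1$, giving $\mu_2 \geq \lfloor (n+1)/2 \rfloor = (n+1)/2 > \mu_1$. Part~2 is immediate: if $G$ has no $1$-factor then $\mu < n/2 = \mu_1$. For Part~3 ($n$ even with a $1$-factor), the equality $\mu = n/2 = \mu_1$ is automatic, and Lemma~\ref{prolific31}(3) again gives $e_1 \geq n+1$. If $e_1 \geq n+2$ we immediately obtain $\mu_2 \geq n/2 + 1 > \mu_1$, so only the equality case $e_1 = n+1$ is delicate --- and this case, by Lemma~\ref{prolific31}(3), forces $G = CP(k, n-k)$.

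For this exceptional graph I would descend one more level and compute the degree sequence of $L(CP(k,n-k))$ edge by edge. The two cycle edges incident to the attachment vertex always yield two vertices of degree~$3$ in $L(G)$; when $n-k\geq 2$ the first path edge gives a third degree-$3$ vertex and the terminal path edge contributes a degree-$1$ vertex, whereas when $n-k=1$ the pendant edge has degree~$2$ and there is no degree-$1$ vertex at all; all remaining edges have degree~$2$. In both subcases, feeding the resulting degree sequence into $e_2(G) = \sum_v \binom{\deg_{L(G)}(v)}{2}$ yields $e_2(G) \geq n+4$, so $\mu_3 \geq \lfloor (n+4)/2 \rfloor = n/2 + 2 > \mu_1$, as required.

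The main obstacle is precisely this exceptional case: the generic growth bound from Lemma~\ref{prolific31}(3) only recovers the statement up to $\mu_2$, and the strict increase at $\mu_3$ must be extracted from an explicit structural analysis of $L(CP(k,n-k))$. The minor technical point is handling the boundary subcase $n-k=1$ (where $L(G)$ has no degree-$1$ vertex) separately from the generic subcase $n-k\geq 2$, but the uniform conclusion $e_2 \geq n+4$ persists.
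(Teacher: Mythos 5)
Your proof is correct and follows essentially the same route as the paper: reduce everything to $\mu_1=\lfloor e(G)/2\rfloor$ via Sumner's theorem, isolate $CP(k,n-k)$ as the unique graph with $e(G)=n$ and $e_1(G)=n+1$, and then push one more iteration for that exceptional case. The only divergence is in that last step: the paper bounds $e_2$ via the average-degree (Jensen) inequality of Theorem~\ref{avgdeg} to get $e_2\geq n+3$, whereas you compute the degree sequence of $L(CP(k,n-k))$ explicitly to get $e_2\geq n+4$ (with the $n-k=1$ boundary case handled separately); your computation checks out and both bounds yield $\mu_3>\mu$.
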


\begin{proof}
$\mbox{ }$\\
\begin{enumerate}
\item{Suppose $n \equiv 1\pmod 2$. Clearly $n_1  = n$ and $\mu_1 = \frac{n-1}{2} \geq \mu$,  with the last inequality holding as equality if and only  if $G$ has a near 1-factor.

Since $G$ is not 2-regular, we have, by Theorem \ref{avgdeg},   $d_1  >  2(d(G)- 1) = 2$.  Hence $e_1 = \frac{d_1n_1}{2}  > n$ implying $n_2 =  e_1  \geq n +1$ and $\mu_2 \geq \frac{ n+1}{2} > \frac{n-1}{2}  = \mu$.}
\item{Suppose $n \equiv 0 \pmod 2$ and $G$ has no 1-factor.  Then immediately $\mu_1 = \frac{n}{2} > \mu$.}
\item{Suppose $n \equiv 0 \pmod 2$ and $G$ has a 1-factor.  Clearly $\mu_1 = \mu = \frac{n}{2}$.   We observe that since $G$ is prolific, there are no isolated vertices in $G$.  Consider the following two equations:
\begin{align}
&\sum_{j \geq 1} x_j=n=m=n_1 \mbox{ (where $x_j=$  number of vertices of degree $j$)}\\
&\sum_{j \geq 1} jx_j=2n \mbox{ (counting the degrees in $G$)}
\end{align}
From these we get the equations
\begin{align}
& \sum_{j \geq 1} (j-2)x_j=0 \\
&x_1= \sum_{j \geq 2}(j-2)x_j \label{eq4}
\end{align}

Observe, by Theorem \ref{dreg}, that  \[2(e_1 - e(G)) =  \sum_{j \geq  2} j(j-2)x_j  =  - x_1 + \sum_{j \geq  2} j(j-2)x_j  =  \sum_{j \geq 3} (j-1)(j-2)x_j \]  by substituting for $x_1$ using equation \eqref{eq4}.

We consider the following cases:

\smallskip
\noindent \emph{Case 1:}  if for some $j \geq 4$, $x_j > 0$,   then $e_1 - e(G) \geq  3$ hence $n_2 \geq e(G) +3  = n +3$ and $\mu_2  \geq \frac{n}{2} +1 > \mu = \frac{n}{2}$.}
 
\smallskip
\noindent \emph{Case 2:}  if $x_3 \geq 2$ and $x_j = 0$  for $ j \geq 4$, then $e_1 - e(G) \geq  2$ hence $n_2 = e_1 = e(G) +2  = n +2$ and $\mu_2  \geq \frac{n}{2} +1 > \mu = \frac{n}{2}$.

\smallskip
\noindent \emph{Case 3:}  if $ x_3  = 1$  and $x_j =0$  for $j \geq 4$ then by equation \eqref{eq4} above  $x_1 = x_3  =1$ and $x_2 = n-2$.

This forces $e_1- e(G) = 1$ and $e_1 = n+1$ implying that $G$ is  $CP(k,n-k)$ and for $n = 0\pmod 2$,  $CP(k,n-k)$ has indeed  1-factor.  Also $n_2 = e_1  = n +1   = 1\pmod 2$ and  hence $\mu_2  = \frac{n}{2} = \mu_1  = \mu(G)$.

Clearly $L(G)$  is not a  tree or a cycle and we already have $n_2 = e_1 =m+1  = n +1$, and $n_1=m=n$.  Hence, \[d_2  > 2(d_1  -1)  =2 \left (\frac{2e_1}{n_1} -1 \right ) =2 \left (\frac{2(n+1)}{n} -1 \right ) = \frac{2(n+2)}{n}\]  and $e_2 =  \frac{d_2n_2}{2}  > \frac{2(n+2)(n+1)}{2n}  > n+2$  forcing $e_2 \geq n+3 \equiv 1 \pmod 2$ hence $\mu_3 \geq  \frac{n}{2} +1 > \mu(G)$.
\end{enumerate}
\end{proof}

\begin{lemma}
Let $G$  be a prolific graph on $n$ vertices and $m$ edges such that $m = n-1$.  Then $\ind(\mu,G)  \leq 4$ and this is sharp.  
\end{lemma}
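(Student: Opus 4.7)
My plan is to reduce the matching number of every iterated line graph to a vertex count via Sumner's theorem, and then drive a case analysis based on the classification of prolific trees already carried out in Theorem B. Since $G$ is a prolific tree it has a vertex $v$ of degree at least $3$, whose incident edges form a clique (and in particular a triangle) in $L(G)$; hence for every $r \geq 1$ the iterate $L^r(G)$ is a connected $K_{1,3}$-free graph that is not a tree, and in particular not a subdivision of $K_{1,3}$. Theorem \ref{matching} then gives $\mu_r(G) = \lfloor n_r/2 \rfloor$ for all $r \geq 1$. Because $\mu(G) \leq \lfloor n/2 \rfloor$, it suffices to find $r \leq 4$ for which $n_r \geq n+1$ (when $n$ is odd) or $n_r \geq n+2$ (when $n$ is even), since either of these forces $\lfloor n_r/2 \rfloor \geq \lfloor n/2 \rfloor + 1 > \mu(G)$. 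A useful secondary fact is that for every $r \geq 2$, Theorem A part 2 applied to $L^{r-1}(G)$ yields $e_r \geq e_{r-1} + 1$, i.e.\ $n_{r+1} \geq n_r + 1$, so the vertex counts are strictly increasing from $n_2$ onwards.

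The substantive work is then the case analysis of Theorem B. In cases (a) and (b), Theorem B already gives $n_2 \geq n+1$, and strict growth lifts this to $n_3 \geq n+2$, so $\mu_3 > \mu(G)$. In case (c), where $G$ is $S_{2,2}$ or one of its subdivisions, Theorem B gives $n_3 \geq n+1$ and strict growth upgrades this to $n_4 \geq n+2$, so $\mu_4 > \mu(G)$. In case (d) with $G$ a type C subdivision of $K_{1,3}$, Lemma \ref{typeC} yields $n_3 = e_2 = e(G)+3 = n+2$, whence $\mu_3 > \mu(G)$; for $G$ a type B subdivision, Lemma \ref{typeB} gives $n_4 \geq e(G)+9 = n+8$, so $\mu_4 > \mu(G)$ with room to spare. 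The tight case is (e), where $G$ is a type A subdivision of $K_{1,3}$: Lemma \ref{typeA} yields $n_2 = n-1$, $n_3 = n$ and $n_4 \geq e(G)+4 = n+3$, so the strict increase of $\mu$ can fail until $r = 4$.

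Sharpness will be demonstrated by the smallest type A subdivision of $K_{1,3}$, namely the tree on $n = 5$ vertices consisting of a centre $v$ adjacent to two leaves $\ell_1, \ell_2$ and to the endpoint of a pendant path $v - x_1 - x_2$. Here $\mu(G) = 2$, and either by direct computation or by Lemma \ref{typeA} with $m_1 = 2$ one obtains $n_1 = 4$, $n_2 = 4$, $n_3 = 5$, $n_4 = 8$, hence $\mu_k(G) = 2$ for $k \leq 3$ and $\mu_4(G) = 4 > \mu(G)$, giving $\ind(\mu, G) = 4$. The principal obstacle is case (e): the matching number remains frozen through three full iterations, and the sharp bound $e_3 \geq e(G)+4$ from Lemma \ref{typeA} is essential, since the weaker bound afforded by strict growth alone would give only $n_4 \geq n+2$, which when $G$ admits a near-$1$-factor would just fail to force a strict increase of $\mu$ at $r = 4$.
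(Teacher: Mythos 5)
Your proof is correct, and it takes a genuinely different, more economical route than the paper's. The paper proves this lemma by a two-level case analysis --- first on the parity of $n$ and on whether $T$ has a $1$-factor or near $1$-factor, then on the degree sequence ($x_j>0$ for some $j\geq 4$, $x_3\geq 3$, $x_3=2$, $x_3=1$ with the three subdivision types, plus further subcases on $x_4,x_5$ in the perfect-matching case) --- tracking the exact value of $\mu_r$ at every iteration and repeatedly invoking the Jensen-type inequality $d_{r+1}>2(d_r-1)$ to squeeze out one extra edge in the borderline cases. You instead bound $\mu(G)\leq \lfloor n/2\rfloor$ once and for all and reduce everything to the single question of when $n_r$ exceeds $n$ by $1$ (odd $n$) or by $2$ (even $n$); the growth estimates you then need are exactly those already proved in Theorem B and Lemmas \ref{typeC}--\ref{typeA}, combined with the monotonicity $n_{r+1}\geq n_r+1$ for $r\geq 2$ extracted from Theorem A. This is shorter and avoids re-deriving the degree-sequence analysis, and your sharpness example is correctly computed ($n_1=n_2=4$, $n_3=5$, $n_4=8$, so $\mu_r=2$ for $r\leq 3$ and $\mu_4=4$). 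What your approach gives up is the precise bookkeeping of $\mu_r$ that the paper needs to characterize \emph{which} trees attain index $4$ (the statement in Theorem G: $n$ odd and type A, or $n$ even and type B with $m_1,m_2$ even), but the lemma as stated asks only for the bound and its sharpness. Two minor points you should make explicit: applying Theorem A part 2 to $L^{r-1}(G)$ requires observing that each iterate for $r\geq 2$ is itself prolific (connected, contains a triangle, and has at least $4$ vertices, hence is none of a path, a cycle, or $K_{1,3}$); and your closing remark that strict growth alone would yield only $n_4\geq n+2$ in the type A case is slightly off (from $n_3=n$ it yields $n_4\geq n+1$), though this is purely motivational and does not affect the argument.
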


\begin{proof}

Clearly $G$ is a tree $T$ which is neither  $K_{1,3}$ nor a path.  Recall Theorem \ref{linetree} which states that  
 \[2e_1(T) -2e(T) = - 2 + \sum_{j\geq 3} (j-1)(j-2)x_j  \]

where $x_j$ is the number of vertices of degree $j$.

We consider two cases  according to  the parity of $n$. 

 \medskip

\noindent \textbf{Case 1} :  $n \equiv 1 \pmod 2$

\smallskip

Clearly if $T$ has no near 1-factor then $\mu  \leq \frac{n-3}{2}$  while $\mu_1 =  \frac{n-1}{2}$  and we are done.

Hence we assume in the sequel  that $T$  has a near 1-factor and $\mu_1=  \mu = \frac{n-1}{2}$.  We consider the following cases:
\begin{enumerate}
\item{Suppose for some $j \geq 4$, $x_j >0$.  Then using Theorem \ref{linetree}  we get $e_1 - e( T)  \geq 2$ and $n_2 = e_1 \geq n+1 \equiv 0 \pmod 2$  hence $\mu_2 \geq \frac{ n+1}{2} > \frac{n-1}{2}  = \mu$.}
\item{Suppose  $x_3 \geq 3$ and $x_j = 0$  for $j \geq 4$.  Then  Theorem \ref{linetree}  we get $e_1 - e( T)  \geq 2$ and $n_2 = e_1 \geq n+1 \equiv 0 \pmod 2$  hence $\mu_2 \geq \frac{ n+1}{2} > \frac{n-1}{2}  = \mu$.}
\item{Suppose  $x_3 =2$ and $x_j = 0$  for $j \geq 4$.  Then  Theorem \ref{linetree}  we get $e_1 - e( T)  =1$ and $n_2 = e_1 =n \equiv 1 \pmod 2$  hence $\mu_2 =\mu_1=\mu = \frac{n-1}{2}$.

But by Theorem \ref{avgdeg},  \[d_2  > 2( d_1  - 1)  =  2(\frac{ 2n}{n-1}  - 1)  = \frac{2(n+1)}{n-1}\] since $L(T)$ is not regular because $d_1 = \frac{2n}{n-1}$ is not an integer  for $n \geq 4$.

Thus we have $e_2 = \frac{d_2n_2}{2} > \frac{2(n+1)n}{2(n-1)}> n+1$ and  hence $n_3  = e_2 \geq n+2$  and  $\mu_3 \geq \frac{n+1}{2} > \frac{n-1}{2} = \mu$.}
\item{Suppose $x_3 = 1$ and $x_j  = 0$  for $j \geq 4$.  It follows that $T$ is a subdivision of $K_{1,3}$.  Clearly $e_1 =e(T) = n -1$  hence  $\mu_2 = \mu_1 = \mu = \frac{n-1}{2}$.  We consider the three types of subdivisions of $K_{1,3}$.

If $T$ is a type C subdivision of $K_{1,3}$ then by Lemma \ref{typeC}, $e_2 = e(T) +3$,  hence $n_3  = n +2$  and $\mu_3 = \frac{ n +1}{2} > \frac{n-1}{2}  = \mu$.

If $T$ is a type B subdivision of $K_{1,3}$ then by Lemma \ref{typeB}, $e_2 = e(T) +2$,  hence $n_3  = n +1 \equiv 0 \pmod 2$  and $\mu_3 = \frac{ n +1}{2} > \frac{n-1}{2}  = \mu$.
 
If $T$ is a type A subdivision of $K_{1,3}$ then by Lemma \ref{typeA}, $e_2 = e(T) +1$,  hence $n_3  = n \equiv 1 \pmod 2$  and $\mu_3= \mu_2 =\mu_1=\mu= \frac{ n -1}{2}$.  But $e_3 \geq e(T) +4$ hence $n_4 \geq n +3 \equiv 0 \pmod 2$  and $\mu_4 \geq \frac{n +3}{2} > \frac{n-1}{2}  = \mu$ and we are done. }
\end{enumerate}

 \medskip

\noindent \textbf{Case 2} :  $n \equiv 0 \pmod 2$

\smallskip

Clearly if $\mu(T) \leq \frac{n-4}{2}$   it follows that $\mu_1 = \frac{n-2}{2}  > \mu$.  So we consider two cases according to $\mu = \frac{n-2}{2}$  or $\mu = \frac{n}{2}$.

\noindent \textbf{Case 2.1} : $\mu = \frac{n-2}{2} =\mu_1$

\begin{enumerate}
\item{Suppose for some $j \geq 4$, $x_j >0$.  Then    we get $e_1 - e( T)  \geq 2$ and $n_2 = e_1 \geq n+1 \equiv 1 \pmod 2$  hence $\mu_2 \geq \frac{ n}{2} > \frac{n-2}{2}  = \mu$.}
\item{Suppose  $x_3 \geq 3$ and $x_j = 0$  for $j \geq 4$.  Then  we get $e_1 - e( T)  \geq 2$ and $n_2 = e_1 \geq n+1 \equiv 1 \pmod 2$  hence $\mu_2 \geq \frac{ n}{2} > \frac{n-2}{2}  = \mu$.}
\item{Suppose  $x_3 =2$ and $x_j = 0$  for $j \geq 4$.  Then  we get $e_1 - e( T)  =1$ and $n_2 = e_1 =n\equiv 0 \pmod 2$  hence $\mu_2 = \frac{n}{2} > \frac{n-2}{2}=\mu$.}
\item{Suppose $x_3 = 1$ and $x_j  = 0$  for $j \geq 4$.  It follows that $T$ is a subdivision of $K_{1,3}$.  Clearly $e_1 =e(T) = n -1$  hence  $\mu_2 = \mu_1 = \mu = \frac{n-2}{2}$.  We consider the three types of subdivisions of $K_{1,3}$.

If $T$ is a type C subdivision of $K_{1,3}$ then by Lemma \ref{typeC}, $e_2 = e(T) +3$,  hence $n_3  = n +2\equiv 0 \pmod 2$  and $\mu_3 = \frac{ n +2}{2} > \frac{n-2}{2}  = \mu$.

If $T$ is a type B subdivision of $K_{1,3}$ then by Lemma \ref{typeB}, $e_2 = e(T) +2$,  hence $n_3  = n +1 \equiv 1 \pmod 2$  and $\mu_3 = \frac{ n }{2} > \frac{n-2}{2}  = \mu$.  
 
If $T$ is a type A subdivision of $K_{1,3}$ then by Lemma \ref{typeA}, $e_2 = e(T) +1$,  hence $n_3  = n \equiv 0 \pmod 2$  and $\mu_3=\frac{ n }{2}> \frac{n-2}{2}  = \mu$.}
\end{enumerate}

\noindent \textbf{Case 2.2} : $\mu = \frac{n}{2}$ and $\mu_1=\frac{n}{2}-1$

\begin{enumerate}
\item{Suppose for some $j\geq 5$,  $x_j > 0$.  Then we get $e_1 - e( T)  \geq 5$ and $n_2  = e_1 \geq n +4  \equiv 0 \pmod 2$  hence  $\mu_2  \geq \frac{n}{2} +2 > \frac{n}{2}  =\mu$.}
\item{Suppose $x_4 \geq 2$.  Then we get  $e_1 - e( T)  \geq 5$ and $ n_2 = e_1 \geq n+4 = 0\pmod 2$  hence  $\mu_2 \geq  \frac{n}{2} +2 > \frac{n}{2}  =\mu$.}
\item{Suppose $x_4  = 1$  and $x_3 \geq 1$.  Then we get $e_1  - e( T)  \geq 3$  and $n_2 = e_1 \geq n+2 \equiv 0 \pmod 2$  hence $\mu_2  \geq  \frac{n}{2} +1 > \frac{n}{2}  =\mu$.}
\item{Suppose $x_4 = 1$ and $x_3 = 0$  and $x_j = 0 $ for $j \geq 5$.  Then $x_1  =  4$  and $x_2 =  n - 5$  and  $T \neq K_{1,4}$  as $n = 0\pmod 2$ so $T $ is a subdivision of $K_{1,4}$.

Using Theorem \ref{linetree} we get $e_1 - e(T) = 2$  hence $n_2 = e_1  = n+1 \equiv 1 \pmod 2$ and $\mu_2 = \frac{n}{2}  = \mu$.  However $L(T)$  is not regular as it must contain a vertex of degree 4 and  a vertex of degree 1.  By Theorem \ref{avgdeg}  $d_2  >  2(d_1 - 1) ) =  2( \frac{2(n+1)}{n-1}- 1 )  = \frac{2(n+3)}{ n-1}$  and $e_2 =  \frac{d_2n_2}{2}  >  \frac{2(n+3)(n+1)}{2(n-1)} > n +3$ hence   $n_3 = e_2 \geq n +4 \equiv 0\pmod 2$  and $\mu_3 \geq \frac{n}{2} +2  > \frac{n}{2} = \mu$.   }
\item{Suppose $x_3 \geq 4 $ and $x_j = 0$  for $j \geq 4$.  Then we get $e_1 - e( T)  \geq 3$  and $n_2  = e_1  \geq n+2 \equiv 0 \pmod 2$  hence  $\mu_2 \geq  \frac{n}{2} + 1 >  \frac{n}{2} = \mu$.}
\item{Suppose $x_3 = 3$ and $x_j = 0$  for $j \geq 4$.  Then  we get  $e_1  - e( T)  = 2$ and $n_2 = e_1 = n+1$  hence $\mu_2 = \frac{n}{2}  =\mu$.  But clearly $L(T)$ is not regular hence  $d_2 > 2( d_1 - 1)  =  2( \frac{2(n+1)}{n-1} - 1)  = \frac{2(n+3)}{n-1}$  and $e_2 =  \frac{d_2n_2}{2}  >  \frac{2(n+3)(n+1)}{2(n-1)} > n +3$ hence   $n_3 = e_2 \geq n +4$, forcing  $\mu_3 \geq \frac{n}{2} +2  > \frac{n}{2} = \mu$.   }
 \item{Suppose  $x_3 = 2$ and $x_j = 0$  for $j \geq 4$.  Then  we get $e_1 - e( T) =1$   and $n_2 = e_1 =  n =0\pmod 2$ and $\mu_2 = \frac{n}{2} = \mu$.  
But $d_2  > 2(d_1 - 1)  =  2( \frac{2n}{n-1}  - 1)  = \frac{2(n+1)}{n-1}$  (since $L(T)$ is not regular because $d_1  = \frac{2n}{n-1}$ is not an integer for $n \geq 4$).

Hence $e_2  = \frac{d_2n_2}{2} > \frac{2(n+1)n}{2(n-1)}> n+1 $ hence $n_3 = e_2  \geq n+2=0\pmod 2$ forcing $\mu_3 \geq \frac{n}{2}+1 >  \frac{n}{2} =  \mu$. }
\item{Suppose $x_3 = 1$ and $x_j  = 0$  for $j \geq 4$.  Then $T$ is a subdivision of $K_{1,3}$.  Clearly we get  $e_1  =e(T) = n -1  \equiv  1\pmod 2$,   hence  $\mu_2 = \mu_1 = \frac{n}{2}  - 1  <  \frac{n}{2} =  \mu$.

If $T$ is a type C subdivision of $K_{1,3}$ then by Lemma \ref{typeC}, $e_2 = e(T) +3$,  hence $n_3  = n +2 \equiv 0 \pmod 2$  and $\mu_3 = \frac{ n +2}{2} > \frac{n}{2}  = \mu$.

If $T$ is a type B subdivision of $K_{1,3}$ then by Lemma \ref{typeB}, $e_2 = e(T) +2$,  hence $n_3  = n +1 \equiv 1 \pmod 2$  and $\mu_3 = \frac{ n }{2}   = \mu$.  But $e_3 > e(T) +7  =  n +6 = 0\pmod 2$ hence $n_4  = e_3  \geq  n+6$ and  $\mu_4 \geq \frac{n}{2} +3 > \frac{n}{2} = \mu$.  We note that $T$ has a 1-factor if and only if $m_1$ and $m_2$ are even, where $m_i$ is the length of path $i$ as per Lemma \ref{typeB}, and $m_1 \geq m_2 > m_3=1$.
 
If $T$ is a type A subdivision of $K_{1,3}$ then  $T$ has no 1-factor contradicting the assumption for Case 2.2.  

}
\end{enumerate} 

\end{proof}

\begin{thmx}
Let $G$ be a prolific graph,  then    $\ind(\mu,G) \leq 4$.  Moreover   $\ind(\mu,G)=4$  if and only if $n\equiv 1 \pmod 2$ and $G$ is a type A subdivision of $K_{1,3}$ or $n \equiv 0 \pmod 2$ and $G$ is a type B subdivision of $K_{1,3}$ with $m_1$ and $m_2$ even.  Hence $k(\mu, \mathcal F)=4$.
\end{thmx}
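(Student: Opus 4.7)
The plan is to combine the four preceding lemmas on graphs with $m\geq n+2$, $m=n+1$, $m=n$, and $m=n-1$ edges (covering every prolific $G$, since connectivity forces $m\geq n-1$) and extract from them both the universal bound $\ind(\mu,G)\leq 4$ and the characterization of equality. In each of the first three regimes the quoted lemmas already give $\ind(\mu,G)\leq 3$, so the only place where $\ind(\mu,G)=4$ can occur is among trees ($m=n-1$), and within that regime only two subcases of the tree lemma actually attain index $4$.

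First I would walk through the three non-tree lemmas to confirm that none of them produces index $4$: the $m\geq n+2$ lemma gives $\ind(\mu,G)=1$, the $m=n+1$ lemma gives at most $2$, and the $m=n$ lemma gives at most $3$ (with $3$ achieved only by $CP(k,n-k)$ with $n$ even). Since $3<4$ in every non-tree case, the candidates for $\ind(\mu,G)=4$ must come from the $m=n-1$ lemma.

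Next I would open up the proof of the tree lemma and track exactly when the bound $\ind(\mu,G)\leq 4$ is tight. Inspecting Case 1 (n odd) the only subcase where four iterations are actually needed is item 4, when $T$ is a type A subdivision of $K_{1,3}$: there $\mu=\mu_1=\mu_2=\mu_3=(n-1)/2$ while $\mu_4>\mu$ (using $e_3\geq e(T)+4$ from Lemma \ref{typeA}). In Case 2.2 (n even, $T$ has a $1$-factor) the only subcase requiring four iterations is item 7 with $T$ a type B subdivision of $K_{1,3}$ whose path-lengths satisfy $m_1,m_2$ even (this parity condition being exactly what is needed for $T$ to have a $1$-factor, while still satisfying the type B structure $m_3=1$); here $\mu_1=\mu_2=(n/2)-1$, $\mu_3=n/2=\mu$, and only $\mu_4>\mu$. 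Every other subcase in Cases 1, 2.1, 2.2 achieves $\mu_k>\mu$ for some $k\leq 3$.

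To close the proof I need to verify the converse direction of the characterization, namely that the two listed extremal graph families really do have $\ind(\mu,G)=4$ and not less. For $n$ odd and $T$ a type A subdivision of $K_{1,3}$, one checks directly that $T$ has a near $1$-factor (so $\mu=(n-1)/2$), that $e_1=e(T)$ and $e_2=e(T)+1$ so $n_2=n$ and $n_3=n$ are odd, keeping $\mu_k=(n-1)/2$ through $k=3$. For $n$ even and $T$ a type B subdivision of $K_{1,3}$ with $m_1,m_2$ even, one checks that $T$ has a $1$-factor so $\mu=n/2$, and Lemmas \ref{typeA}, \ref{typeB} give $n_1=n-1$, $n_2=n$, $n_3=n+1$, all of which leave $\lfloor\cdot/2\rfloor \leq n/2$. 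The combined statement then yields $k(\mu,\mathcal F)=4$. The one mildly delicate point, which I expect to be the main bookkeeping obstacle, is the parity accounting in Case 2.2 item 7 together with confirming that ``type B subdivision with $m_1,m_2$ even'' is precisely the $1$-factor condition in that family — everything else reduces to quoting the existing lemmas.
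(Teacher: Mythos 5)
Your proposal is correct and takes essentially the same route as the paper, which states this theorem without a separate proof precisely because it is the combination of the four preceding lemmas: the non-tree regimes all give index at most $3$, and the only index-$4$ subcases of the tree lemma are the type A subdivision with $n$ odd and the type B subdivision with $m_1,m_2$ even (equivalently, with a $1$-factor) for $n$ even. The one cosmetic slip in your verification is that for a subdivision of $K_{1,3}$ one has $n_2=e_1=e(T)=n-1$ rather than $n$, but since $\left\lfloor (n-1)/2\right\rfloor\leq\left\lfloor n/2\right\rfloor\leq\mu$ in both parities, the conclusion $\mu_2\leq\mu$ is unaffected.
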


\subsection{The chromatic number $\chi(G)$:   $k(\chi,\mathcal F)=3$}

For this parameter we use a result by Stacho in \cite{Stacho2001}.  The author defines the following parameter.  Let \[\phi(G) = \max_{u \in V(G)} \max_{\substack{\scriptscriptstyle v \in N(u) \\ \scriptscriptstyle \deg(v) \leq \deg(u)}} \deg(v).\]  The following theorem is then proved:
\begin{theorem} [Stacho] \label{Stacho}
Let $G$ be a graph.  Then $\chi(G) \leq \phi(G)+1$.
\end{theorem}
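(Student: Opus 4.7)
The plan is to deduce Stacho's bound from the classical Szekeres--Wilf degeneracy bound on the chromatic number, by showing that $\phi(G)$ controls the degeneracy of $G$. Recall that a graph $G$ is $k$-\emph{degenerate} if every non-empty subgraph of $G$ contains a vertex of degree at most $k$, and that $\chi(G) \le \mathrm{degen}(G) + 1$, which follows via a greedy colouring in the reverse of a degeneracy ordering. So the entire task reduces to the claim that $\delta(H) \le \phi(G)$ for every non-empty subgraph $H$ of $G$.

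To prove this core claim, let $H \subseteq G$ be non-empty and let $v \in V(H)$ attain the minimum degree in $H$, so $\deg_H(v) = \delta(H)$. The case $\delta(H)=0$ is trivial, so assume $v$ has at least one neighbour $u$ in $H$. Since taking a subgraph only decreases degrees, we have $\deg_G(v) \ge \deg_H(v) = \delta(H)$ and $\deg_G(u) \ge \deg_H(u) \ge \delta(H)$. Hence the edge $uv \in E(G)$ satisfies $\min(\deg_G(u),\deg_G(v)) \ge \delta(H)$. Next I would rewrite Stacho's parameter in the equivalent edge-maximum form $\phi(G)=\max_{xy\in E(G)}\min(\deg_G(x),\deg_G(y))$; this form gives immediately $\min(\deg_G(u),\deg_G(v)) \le \phi(G)$, so $\delta(H) \le \phi(G)$ as desired.

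Since $H$ was arbitrary, $G$ is $\phi(G)$-degenerate, and therefore $\chi(G) \le \phi(G) + 1$, which is exactly Stacho's theorem. The one step I would flag as the main potential obstacle is the equivalence between the definition of $\phi(G)$ as written in the paper --- a double maximum over a vertex $u$ and a neighbour $v$ of $u$ with $\deg(v)\le \deg(u)$ --- and the cleaner edge-maximum formulation used above. Verifying this amounts to noting that in the edge $uv$ realising the inner maximum one may always take $u$ to be the higher-degree endpoint, so the two expressions coincide; once this is in place, the degeneracy argument is essentially a one-liner.
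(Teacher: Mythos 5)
Your proof is correct. Note that the paper does not prove this statement at all: it is imported verbatim as a cited result of Stacho, so there is no in-paper argument to compare against. Your degeneracy route is a clean self-contained derivation, and the one step you flagged does go through: for any edge $xy$ with $\deg(x)\ge\deg(y)$ the pair $(u,v)=(x,y)$ is admissible in the double maximum and contributes $\deg(y)=\min(\deg(x),\deg(y))$, while conversely every admissible pair $(u,v)$ is an edge contributing exactly $\min(\deg(u),\deg(v))$; hence $\phi(G)=\max_{xy\in E(G)}\min(\deg_G(x),\deg_G(y))$ (the inner maximum can be empty for some vertices $u$, but the outer maximum still sees every edge from its higher-degree endpoint, so this causes no loss). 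With that identity, your core claim is sound: in any non-empty subgraph $H$ containing an edge, a minimum-degree vertex $v$ and a neighbour $u$ of it satisfy $\deg_G(v)\ge\deg_H(v)=\delta(H)$ and $\deg_G(u)\ge\deg_H(u)\ge\delta(H)$, so $\delta(H)\le\min(\deg_G(u),\deg_G(v))\le\phi(G)$, and the edgeless case is trivial. Thus $G$ is $\phi(G)$-degenerate and the Szekeres--Wilf greedy colouring gives $\chi(G)\le\phi(G)+1$. This is essentially the standard proof of Stacho's bound, and it is a perfectly adequate substitute for the external citation.
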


\begin{observation} \label{vizing}
For a graph $G$, $\chi_1(G)  = \chi'(G) \in \{ \Delta(G), \Delta(G)+1 \}$ by Vizing's Theorem \cite{vizing1964}.
\end{observation}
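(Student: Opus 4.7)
The plan is to prove the observation in two short steps, each of which is entirely standard. First I would establish the identity $\chi_1(G) = \chi'(G)$ directly from the definition of the line graph. By the paper's notation, $\chi_1(G) = \chi(L(G))$. Since the vertices of $L(G)$ are precisely the edges of $G$ and two such vertices are adjacent in $L(G)$ exactly when the corresponding edges of $G$ share an endpoint, a function from $V(L(G))$ to a palette of colors is a proper vertex coloring of $L(G)$ if and only if it assigns distinct colors to any pair of edges of $G$ that share a vertex, i.e., if and only if it is a proper edge coloring of $G$. Taking the minimum number of colors on each side yields $\chi(L(G)) = \chi'(G)$, and hence $\chi_1(G) = \chi'(G)$.

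Second, I would invoke Vizing's Theorem, which asserts that for any simple graph $G$ one has $\Delta(G) \leq \chi'(G) \leq \Delta(G) + 1$, so in particular $\chi'(G) \in \{\Delta(G), \Delta(G)+1\}$. Combining this with the identity from the first step immediately gives $\chi_1(G) = \chi'(G) \in \{\Delta(G), \Delta(G)+1\}$, which is the claimed statement.

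There is no genuine obstacle in this proof: the identity $\chi(L(G)) = \chi'(G)$ is immediate from how adjacency is defined in the line graph, and Vizing's Theorem is invoked as a black box. This is precisely why the authors record the statement as an observation rather than as a theorem; it will serve as a convenient bridge in subsequent arguments between the chromatic number of an iterated line graph and the maximum degree at the previous step.
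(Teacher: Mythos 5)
Your proof is correct and matches the paper's reasoning exactly: the paper treats the identity $\chi_1(G)=\chi(L(G))=\chi'(G)$ as immediate from the definition of the line graph and then simply cites Vizing's Theorem, which is precisely your two-step argument. No further comment is needed.
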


\begin{thmx} \label{thmchrom}
 Let $G$ be a prolific graph, then $\ind(\chi,G) \leq 3$ and there are infinitely many prolific graphs  for which $\ind(\chi,G)=3$.  Hence $k(\chi,\mathcal F)=3.$

 \end{thmx}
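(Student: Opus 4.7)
The plan is to lower bound $\chi_3(G)$ using the clique number of $L^3(G)$ and upper bound $\chi(G)$ by Brooks' theorem. The key observation is that for any graph $H$ with $\Delta(H) \geq 3$, the edges incident to a vertex of maximum degree form a $K_{\Delta(H)}$ in $L(H)$, and by Krausz's characterisation of line-graph cliques this is in fact the largest clique; hence $\chi_3(G) \geq \omega(L^3(G)) = \Delta_2(G)$, and it suffices to prove $\Delta_2(G) > \chi(G)$. Since prolific graphs exclude odd cycles, Brooks' theorem gives $\chi(G) \leq \Delta(G)$ unless $G = K_n$ for some $n \geq 4$, which suggests splitting into two cases.

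For $G = K_n$ with $n \geq 4$: the line graph of an $r$-regular graph is $(2r-2)$-regular, so $L^2(K_n)$ is $(4n-10)$-regular and $\chi_3(K_n) \geq \Delta_2(K_n) = 4n-10 > n = \chi(K_n)$ for every $n \geq 4$. For non-complete prolific $G$: $\chi(G) \leq \Delta(G)$, and by Theorem~\ref{maxdeg} one has $\Delta_2(G) > \Delta(G)$ except in three exceptional cases, namely $K_{1,4}$, the type~A subdivisions of $K_{1,3}$, and the trees obtained from $S_{2,2}$ by subdividing the middle edge at least twice. Outside these exceptions, $\chi_3(G) \geq \Delta_2(G) > \Delta(G) \geq \chi(G)$. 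In each exceptional case $G$ is a tree or a star, so $\chi(G) = 2$; but $G$ has a vertex of degree at least $3$ whose incident edges form a triangle in $L(G)$, so $\chi_1(G) \geq 3 > \chi(G)$ and $\ind(\chi, G) = 1$. This establishes $\ind(\chi, G) \leq 3$ for every prolific $G$.

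For the matching lower bound, I would exhibit the infinite family $\{CP(2k+1, 1) : k \geq 1\}$, namely an odd cycle with a single pendant edge. For each such $G$: $\chi(G) = 3$ from the odd cycle; $\chi_1(G) = \chi'(G) = 3$ because $\Delta(G) = 3$ and $G$ is of class~1 (3-edge-colour the odd cycle, then give the pendant edge the colour unused at the attachment vertex). An explicit structural analysis shows that $L^2(G)$ decomposes as the fan $F_4 = K_1 + P_4$ on five vertices (the apex corresponding to the edge of $L(G)$ joining the two vertices of degree $3$) together with a bridging path of length $2k-1$ between the two outer vertices of the fan's base $P_4$. Colouring the apex with $1$, the base $P_4$ alternately $2,3,2,3$, and extending by the 2-alternation along the bridging path (which is consistent with its endpoints because the bridge has odd length $2k-1$) yields a proper 3-colouring of $L^2(G)$, so $\chi_2(G) = 3$. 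Finally the fan's apex has degree $4$ in $L^2(G)$, whence $\Delta_2(G) \geq 4$ and $\chi_3(G) \geq 4 > \chi(G)$. Thus $\ind(\chi, CP(2k+1, 1)) = 3$ for every $k \geq 1$, giving infinitely many extremal graphs.

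The main obstacle is the clean uniform description of $L^2(CP(2k+1,1))$ and the verification that the proposed 3-colouring really does propagate correctly along the bridging path for all $k$; as noted, the parity works out because the bridging path has odd length $2k-1$ for every $k$, so the natural 2-alternation along it is consistent with the colours $2$ and $3$ assigned to its endpoints in the fan.
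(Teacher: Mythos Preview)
Your argument is essentially correct and is considerably cleaner than the paper's own proof. The paper proceeds by a lengthy case analysis on whether $\chi'(G)=\Delta(G)$ or $\chi'(G)=\Delta(G)+1$, and then invokes Stacho's bound $\chi(G)\le\phi(G)+1$ together with several ad~hoc subgraph checks for small $\Delta$. Your route---bound $\chi$ above by Brooks, bound $\chi_3$ below by $\Delta_2$ via $\chi_3=\chi'_2\ge\Delta_2$, and then invoke Theorem~\ref{maxdeg}---bypasses all of that. It is a genuine simplification, and the handling of $K_n$ and of the three exceptional tree families is exactly right.

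One citation needs tightening. Theorem~\ref{maxdeg} as stated only asserts $\Delta_2(G)>\Delta(G)$ for \emph{non-fine} $G$ (outside the three exceptional families); for fine $G$ it gives $\Delta_1(G)>\Delta(G)$ but says nothing about $\Delta_2$. This is harmless, because for fine non-complete $G$ you get $\chi_2(G)=\chi'_1(G)\ge\Delta_1(G)>\Delta(G)\ge\chi(G)$ directly, so $\ind(\chi,G)\le 2$. Either say this, or add the one-line observation that the vertex of degree $\Delta_1\ge 4$ in $L(G)$ has a neighbour of degree at least $2$ (else $L(G)=K_{1,\Delta_1}$, impossible), forcing $\Delta_2\ge\Delta_1>\Delta$. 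Also, the clique remark is superfluous: $\chi_3=\chi'_2\ge\Delta_2$ already follows from Vizing, so you need not discuss $\omega(L^3(G))$ or invoke Krausz.

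Your infinite family $CP(2k+1,1)$ differs from the paper's $CP(3,n-3)$ but is equally valid, and your structural description of $L^2(CP(2k+1,1))$ as a fan $K_1+P_4$ with a bridging path of odd length $2k-1$ between the outer base vertices is accurate (for $k=1$ this collapses to the wheel $W_5$, still $3$-chromatic). The parity check on the bridging path is correct: with the base coloured $2,3,2,3$ the endpoints of the bridge receive colours $2$ and $3$, and an odd-length $2$-alternation matches them. So the $3$-colouring extends and $\chi_2=3$, while $\Delta_2=4$ forces $\chi_3\ge 4$.
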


\begin{proof} 
$\mbox{ } \\$
\begin{enumerate}
\item{Suppose $\chi'(G) = \Delta(G) +1$.
 
If $\chi(G)= \Delta(G)$  we are done as, by Observation \ref{vizing} above, $\chi_1(G) > \chi(G)$.

So assume $\chi(G) = \Delta(G) +1$.  Then, since $G$ is prolific, $\Delta \geq 3$,  and by Brook's theorem, $G = K_{\Delta+1}$.  Also, by the chromatic index for complete graphs, $\Delta+1 \equiv 1 \pmod 2$ hence $\Delta \geq 4$ is even.

We can observe that  $L(K_{\Delta+1})$ is $2\Delta - 2$-regular  and hence $L^2(K_{\Delta+1})$ contains a clique on $2\Delta-2$ vertices and $\chi_2(K_{\Delta+1}) \geq 2\Delta - 2 > \Delta+1$  for $\Delta \geq 4$ and we are done.}

\item{Suppose $\chi'(G) = \Delta(G)$.
\begin{enumerate}
\item{if $\chi(G)  < \Delta(G)$ we are done since  $\chi_1(G) > \chi(G)$.}
\item{if $\chi(G) = \Delta(G) +1$ then by Brook's theorem $G = K_{\Delta+1}$ and by the chromatic index of complete graphs, $\Delta+1 \equiv 0 \pmod 2$ hence $\Delta \equiv 1 \pmod 2$  and $\Delta  = 3$ or  $\Delta \geq 5$.  
\begin{itemize}
\item{If $\Delta \geq 5$ then since $L(G)$ is $2\Delta - 2$-regular, $\chi_2(K_{\Delta+1}) \geq 2\Delta - 2 > \Delta+1$  for $\Delta \geq 4$ and we are done.}
\item{If $\Delta = 3$,  then  $G = K_4$, $L(K_4)$ is  4-regular  and by direct checking we get $\chi'_1(K_4) = 4 $, hence $\chi_2(K_4) = 4$  but  $L^2(K_4)$ is 6-regular hence  $\chi_3(K_4) \geq 6  > \chi(K_4)$. }
 \end{itemize}}
\item{Suppose $\chi'(G) =\chi(G) =\Delta(G)$.  If $\deg(u)  = \Delta$ and $w$ is a vertex adjacent to $u$ with $\deg(w) \geq 3$ then the degree of the edge $uw$  in $L(G)$ is $\deg(u) +\deg(w)- 2  \geq \Delta +1$ and $\chi_2(G) \geq \Delta +1$,  and we are done.

So we may assume that the only neighbours of a vertex of degree $\Delta$ are of degrees 1 and 2.

Now by Theorem \ref{Stacho}, $\phi(G)  \leq \max \{ 2, \Delta - 1\}$, since vertices of maximum degree are non-adjacent,  and $\Delta = \chi(G) \leq \phi(G) +1$,   forcing $\phi(G) = \Delta -1$ and hence there exist two adjacent vertices $u$ and $v$  of degree $\Delta -1$ unless $\Delta=3$, in which case it is possible that $\Delta -1 = \deg(u) \leq \deg(v) \leq \Delta$.

So assume $\Delta \geq 4$.  The edge $uv$ forms a vertex of degree $2\Delta -4$ in $L(G)$ hence a clique of order $2\Delta - 4$ in $L^2(G)$ and $\chi_2(G) \geq  2\Delta - 4 >\Delta$ for $\Delta \geq 5$. 

So it remains to  consider the cases $\chi = \Delta \in  \{ 3, 4 \}$.

Let $\chi' = \chi = \Delta = 4$.  Consider a vertex $v$ of  degree 4 in $G$ and let $u_1,\ldots,u_4$ be its neighbours.
\begin{itemize}
\item{If there is an edge say $u_1u_2$ then these five edges form, in $L(G)$, $ K_4$ with a vertex adjacent to the vertices representing the edges  $vu_1$ and $vu_2$  which are  adjacent of degree 4. Hence $\Delta_2(G) \geq 6$ and $\chi_3(G) \geq  6 > \chi(G)= 4$.}
\item{If there is an edge say $wu_1$, then these 5 edges form, in $L(G)$, $ K_4$ with an attached leaf to the vertex representing  $vu_1$. So $L(G)$ contains a vertex of degree 4 adjacent to a vertex of degree at least 3 hence $\Delta_2(G) \geq 5$ and $\chi_3(G) \geq  5 > \chi(G)= 4$.}
\end{itemize}

Let $\chi' = \chi = \Delta = 3$.  Consider a vertex $v$ of  degree 3 in $G$ and let $u_1,u_2,u_3$ be its neighbours.

\begin{itemize}
\item{If there is an edge in the neighborhood of $v$,  say $u_1u_2$, then $G$ contains the graph $H = K_3 +$ attached leaf.  $L(H) = K_4 -e$,  $L^2(H) = W_5$, the wheel  with four vertices and a centre vertex of degree 4  hence $\chi_3(G) \geq \Delta_2(G) \geq 4 > \chi(G)$.      }
\item{If the neighborhood of $v$ has no edges then, since $\chi(G) = \Delta = 3$, $G$ must contains an odd cycle and since $G$ is prolific there is a vertex  $v$ of degree 3 on the odd cycle adjacent to two vertices $u$ and $w$  of degree 2 on the odd cycle.  If $u$ and $w$ are adjacent we again obtain the graph $H = K_3 +$ attached leaf (because of the third edge in $v$) and we are done as before.

Otherwise $u$ and $w$ are nonadjacent and let $u^*$ be adjacent to $u$ and $w^*$ be adjacent to $w$ on the odd cycle.  Observe that if $v$ is adjacent to either $u^*$ or $w^*$  we again have the subgraph $H = K_3 +$attached leaf and we are done.

So the tree on the vertices $u^*,u,v,w,w^*+$ the third edge in $v$  form in $L(G)$  the Bull graph $B$, which is $K_3$ with an attached leaf to two of its vertices,   and in particular we have in $L(G)$ a copy of $K_3 +$attached leaf and we are done as before.} 
\end{itemize}}
  \end{enumerate}}
\end{enumerate}
The infinite family of graphs $CP(3,n-3)$ is such that $\chi(G)  =\chi_1(G) = \chi_2(G)  = 3$  and only $\chi_3(G)  = 4$, hence $k(\chi,\mathcal F)=3$.
\end{proof}

\subsection{The chromatic index $\chi'(G)$:  $k(\chi',\mathcal F)=3$} 
\begin{thmx}

 Let $G$ be a prolific graph, then  $\ind(\chi',G) \leq 3$.  Moreover  $\ind(\chi',G)=3$  if and only if $G = K_{1,4}$, $G$ is a type A subdivision of $K_{1,3}$ or $G$  is obtained by a subdivision of the middle edge of $S_{2,2}$  at least twice.  Hence $k(\chi', \mathcal F)=3$.
\end{thmx}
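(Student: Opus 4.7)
The plan is to prove both $k(\chi',\mathcal F) \le 3$ and the characterisation of extremals in tandem, driven by the identity $\chi'(L^k(G)) = \chi(L^{k+1}(G))$ which translates the $\chi'$-iteration one step forward into the $\chi$-iteration. This yields $\ind(\chi', G) = \ind(\chi, L(G))$ for every prolific $G$. Since $L(G)$ is again prolific whenever $G$ is (line graphs are claw-free, and $L(G)$ is neither a path nor a cycle when $G$ is prolific), Theorem H applied to $L(G)$ gives $\ind(\chi, L(G)) \le 3$ immediately, so $\ind(\chi', G) \le 3$.

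For the sufficiency of the three listed families, I would verify $\ind(\chi', G) = 3$ by direct computation. For $G = K_{1,4}$, the iterates give $\chi'(G) = 4$, $\chi'(L(G)) = \chi'(K_4) = 3$, $\chi'(L^2(G)) = \chi'(K_{2,2,2}) = 4$, and $\chi'(L^3(G)) \ge \Delta(L^3(G)) = 6$. For a type A subdivision of $K_{1,3}$ with pendant path of length $m_1 \ge 2$, one has $L(G) = CP(3, m_1-1)$, which is $3$-edge-colourable; iterating and using Lemma \ref{typeA} to track edge growth shows that $L^3(G)$ finally contains a vertex of degree $4$, forcing $\chi'_3(G) \ge 4 > 3 = \chi'(G)$. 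The subdivision-of-$S_{2,2}$ case is dispatched in the same spirit: $L(G)$ is a pair of triangles joined by a path and retains chromatic index $3$ under two further line-graph iterations before a vertex of degree $4$ first appears in $L^3(G)$.

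For necessity, suppose $\ind(\chi', G) = 3$. The sandwich $\Delta_r(G) \le \chi'_r(G) \le \chi'(G) \le \Delta(G)+1$, combined with $\chi'_r(G) \le \chi'(G)$ for $r=1,2$, gives $\Delta_r(G) \le \Delta(G)+1$ for $r=1,2$. If in fact $\Delta_r(G) \le \Delta(G)$ for $r = 1, 2$, then since $k(\Delta,\mathcal F) = 3$ by Theorem \ref{maxdeg} one must have $\ind(\Delta, G) = 3$, and Theorem \ref{maxdeg}'s characterisation pins $G$ down to the three listed families.

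The main obstacle is the borderline case $\Delta_r(G) = \Delta(G)+1$ for some $r \in \{1,2\}$. There the sandwich collapses to equality, forcing $G$ to be class $2$ with $\chi'(G) = \Delta(G)+1$ and $L^r(G)$ to be class $1$ of maximum degree $\Delta(G)+1$. The subcase $G = K_{\Delta+1}$ with $\Delta$ even dispatches directly, since $\Delta_1(G) = 2\Delta - 2 \ge \Delta + 2$ for $\Delta \ge 4$ gives $\chi'_1(G) > \chi'(G)$. For the remaining class $2$ prolific graphs, the structural constraint $\Delta_1 \le \Delta+1$ forces every vertex of degree $\Delta$ to have all neighbours of degree at most $3$; I would combine this with Vizing's Adjacency Lemma applied to a critical class $2$ subgraph of $G$ to derive either an immediate contradiction or a strict increase in $\chi'$ within two iterations, thus ruling these graphs out of the extremal set and completing the characterisation.
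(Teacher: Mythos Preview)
Your reduction $\ind(\chi',G)=\ind(\chi,L(G))$ together with Theorem~H is correct and gives the bound $k(\chi',\mathcal F)\le 3$ more cleanly than the paper, which proves Theorem~I from scratch by a case analysis on $\Delta$ and Vizing class without invoking Theorem~H at all. Likewise, your idea of reducing the necessity direction to the characterisation in Theorem~\ref{maxdeg} is a genuinely different organising principle: the paper instead re-derives the structural constraints on $G$ inside the $\chi'$ argument.

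The gap is your ``borderline'' class~2 case. You correctly isolate it: $\ind(\chi',G)=3$ with $\Delta_r=\Delta+1$ for some $r\in\{1,2\}$ forces $G$ to be class~2 with $\chi'(G)=\Delta+1$. But the appeal to Vizing's Adjacency Lemma on a critical subgraph is left as a plan, and the piece you actually need is sharper and simpler. The paper uses Fournier's theorem: in any class~2 graph the subgraph induced by the vertices of maximum degree contains a cycle. Hence there exist three consecutive vertices of degree $\Delta$ on that cycle, so two adjacent edges of $G$ each have both endpoints of degree $\Delta$. This gives $\Delta_1\ge 2\Delta-2$, and since those two edges are adjacent in $L(G)$ with degree $2\Delta-2$ each, also $\Delta_2\ge 4\Delta-6$. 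For every prolific $G$ one has $\Delta\ge 3$, whence $\chi'_2\ge\Delta_2\ge 4\Delta-6>\Delta+1=\chi'(G)$, so $\ind(\chi',G)\le 2$. Thus no class~2 prolific graph is extremal, the borderline case is empty, and your Case~A via Theorem~\ref{maxdeg} really does exhaust the extremals. Without this step (or an equivalent one actually carried out) the characterisation is incomplete; with it, your route is shorter than the paper's.
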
 

\begin{proof}
We consider the following cases:
\begin{enumerate}
\item{$\chi'(G) = \Delta(G)+1$, and  $G$ is in Vizing class 2.  

\noindent

By a result of Fournier \cite{fournier1973colorations}, the graph $G_{\Delta}$, induced in $G$  by the vertices of maximum degree, contains a cycle $C$.  Let $e_1$ and $e_2$ be two incident edges on the cycle $C$.  In $L(G)$,  the vertices representing $e_1$ and $e_2$ are adjacent and have degree $2\Delta - 2$ each.  This shows that $\chi'_1  \geq \Delta_1 =  2\Delta- 2 > \Delta+1$  for $\Delta \geq 4$.

Observe  also that  $\chi'_2 \geq \Delta_2 \geq 2\Delta_1- 2  \geq 4\Delta - 6  > \Delta +1$  for  $\Delta \geq 3$  ($\Delta \geq 3$ forced by $G$ being prolific) which  shows that for every prolific graph of Vizing type 2,  one iteration suffices if $\Delta \geq 4$ and  at most two iterations suffice if $\Delta = 3$.}

\item{$\chi'(G) = \Delta(G)$.  

\noindent
\begin{enumerate}
\item{If $\Delta \geq 5$ then  $L(G)$  contains $K_{\Delta}$ and hence two adjacent vertices of degree $\Delta-1$. So $\chi'_2  \geq \Delta_2  \geq  2\Delta - 4 > \Delta$ for $\Delta \geq 5$, and two iterations suffice.} 
\item{If $\Delta = 4$, consider the vertex $v$ of degree 4. The edges incident with $v$ form $K_4$ in $L(G)$.  We consider two cases:
\begin{itemize}
\item{If $G = K_{1,4}$ then  $\chi' = 4$.  Now $L(G) = K_4$,  $\chi'(K_4) = 3$ and also  $L^2(G)$ is $K_{2,2,2}$ which is  regular of degree 4 and in Vizing class 1. Hence $\chi'_2 = 4$,  and lastly $\chi'_3 \geq \Delta_3  = 6$  and we are done.}
\item{Suppose $G$ is not $K_{1,4}$ and let $v $ be a vertex of degree 4.   Then either there is an edge  between two neighbours of $v$, say $u_1u_2$ is such an edge, or there is an edge incident with say $u_1$.

If there is an edge $u_1u_2$ then  in $L(G)$  we have $K_4$ and an extra vertex represents  $e =u_1u_2$ adjacent to the vertices representing  $vu_1$ and $vu_2$,  and these two vertices are adjacent vertices of degree 4  in $L(G)$.  Hence  $\chi'_2  \geq \Delta_2  \geq  6$,  and we are done.

If there is an edge incident with $u_1$, say $u_1w$,  then, in $L(G)$, we have $K_4$ with one vertex adjacent to the vertex representing the edge $u_1w$ of degree  at least 4  and the other vertices of $K_4$ with degrees at least 3.   Hence  $\chi'_2  \geq \Delta_2  \geq 5$,  and we are done.}
\end{itemize}}
\item{If $\Delta = 3$ then if there are two adjacent vertices of degree 3, $\chi'_1 \geq \Delta_1  = 4$ and we are done.

So consider a vertex $v$ of degree 3 and let $u_1$, $u_2$ and $u_3$ be its neighbours, all of degree at most 2.
\begin{itemize}
\item{Suppose there are two edges,  one edge incident with $u_1$ but not $u_2$ and $u_3$,  and another edge incident with $u_2$ but not with $u_1$ and $u_3$.   Then  $L(G)$  contains two adjacent vertices of degree 3 and $\chi'_2 \geq \Delta_2   = 4$ and we are done.}
\item{Suppose there is an edge incident with $u_1$ and $u_2$.  Again  $L(G)$  contains two adjacent vertices of degree 3 and $\chi'_2 \geq \Delta_2  = 4$ and we are done.}
\item{Suppose every vertex of degree 3 is adjacent with two leaves. Then we assume $G$ is a tree, for if it contains a cycle, since $G$  is prolific there must be a vertex of degree 3 on the cycle  which satisfies one of the cases above and we are done.  However, we proved in Theorem \ref{maxdeg} that in this case,  when we considered the parameter $\Delta$, $\Delta_3 > \Delta$ but  $\Delta_2 = \Delta$ if and only if $G = K_{1,4}$,  $G$ is a type A subdivision of $K_{1,3}$, or $G$  is obtained by subdivision the middle edge of $S_{2,2}$  at least twice.

 For $K_{1,4}$ we already proved that three iterations are necessary and sufficient. For $G$ a type A  subdivision of $K_{1,3}$ we note that both $G$, $L(G)$ and $L^2(G)$  are in Vizing class 1 hence 3 iterations are necessary and sufficient. 

Lastly for the trees obtained by subdividing the middle edge of $S_{2,2}$ at least twice we note  that both $L(G)$  and $L^2(G)$ are in Vizing class 1 and hence  3 iterations are necessary and sufficient, thus completing the proof. }
\end{itemize}}

\end{enumerate}}
\end{enumerate}
\end{proof}

\subsection{The clique number $\omega(G)$:   $k(\omega,\mathcal F)=3$} 

\begin{observation}
For a graph $G$, $\omega_1(G)=\Delta(G)$.
\end{observation}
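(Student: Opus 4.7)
The plan is to use the standard correspondence between cliques of $L(G)$ and sets of pairwise-incident edges of $G$, and then apply the classical Whitney-style dichotomy: any family of pairwise-incident edges in a graph is either a \emph{star}, all sharing one common endpoint, or the three edges of a \emph{triangle} $K_{3}$.

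For the lower bound $\omega_{1}(G) \geq \Delta(G)$, I would take a vertex $v$ of maximum degree; the $\Delta(G)$ edges incident with $v$ are pairwise incident in $G$ and so their images form a clique of size $\Delta(G)$ in $L(G)$. For the upper bound, let $K$ be a maximum clique of $L(G)$ and let $E_{K} \subseteq E(G)$ be the corresponding edge set. If all members of $E_{K}$ share a common endpoint $u$, then $|E_{K}| \leq \deg(u) \leq \Delta(G)$. Otherwise there exist $e_{1},e_{2} \in E_{K}$ meeting at some vertex $v$ together with an $e_{3} \in E_{K}$ not containing $v$; being incident to both $e_{1}$ and $e_{2}$, $e_{3}$ is forced to join their two other endpoints, so $E_{K}$ realises a triangle and $|E_{K}|=3$. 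Since the observation is applied in the prolific setting (equivalently $\Delta(G) \geq 3$), this also yields $|E_{K}| \leq \Delta(G)$, completing both inequalities.

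The only step that deserves any care is the star/triangle dichotomy, but this is a short case analysis: given two incident edges $e_{1},e_{2}$ meeting at $v$, a further pairwise-incident edge either contains $v$ (extending the star) or joins the two endpoints of $e_{1},e_{2}$ different from $v$ (closing a triangle); once the triangle is closed, no fourth edge can remain simultaneously incident with all three sides. No further obstacle is expected, and the observation then feeds directly into the item-10 argument of the unboundedness theorem and into the subsequent determination of $k(\omega,\mathcal F)$.
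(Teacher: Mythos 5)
Your proof is correct and is exactly the standard argument that the paper leaves implicit (the observation is stated without any proof): cliques of $L(G)$ correspond to pairwise-incident edge sets of $G$, and the star/triangle dichotomy gives both inequalities. You are also right to note that the equality as literally stated needs $\Delta(G)\geq 3$ to absorb the triangle case (e.g.\ $\omega_1(K_3)=3>2=\Delta(K_3)$), which holds wherever the paper invokes the observation since prolific graphs and their iterated line graphs have maximum degree at least $3$.
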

 
\begin{thmx}
 Let  $G$ be a prolific graph then  $\ind(\omega,G) \leq 3$.  Moreover,  $\ind(\omega,G)=3$   if and only if $\omega= 4$ and $G = K_4 $ or  $\omega=\Delta  = 3$ and the set of vertices of degree 3 forms an independent set in $G$.  Hence $k(\omega,\mathcal F)=3$.
\end{thmx}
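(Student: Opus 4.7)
The overall strategy is to exploit the key observation stated just before the theorem: $\omega_1(G)=\Delta(G)$. Applied once more, this gives $\omega_2(G)=\omega_1(L(G))=\Delta_1(G)=\max\{\deg(u)+\deg(v)-2:uv\in E(G)\}$, and $\omega_3(G)=\Delta_2(G)$. So the entire question reduces to comparing $\omega(G)$ with the maximum degrees of $G$, $L(G)$ and $L^2(G)$. Since any graph satisfies $\omega(G)\le\Delta(G)+1$, there is a natural three-way case split: $\omega<\Delta$, $\omega=\Delta+1$, or $\omega=\Delta$.

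The first two cases are short. If $\omega(G)<\Delta(G)$ then already $\omega_1(G)=\Delta(G)>\omega(G)$, so $\ind(\omega,G)=1$. If $\omega(G)=\Delta(G)+1$, Brooks' theorem forces $G=K_\omega$, and prolificity rules out $\omega\le 3$, so $\omega\ge 4$. Then $L(K_\omega)$ is vertex-transitive and regular of degree $2\omega-4$, giving $\omega_2(K_\omega)=\Delta(L(K_\omega))=2\omega-4$. For $\omega\ge5$ this exceeds $\omega$ and $\ind\le 2$; for $\omega=4$ one has $\omega_2(K_4)=4$, and a direct computation using that $L(K_4)=K_{2,2,2}$ is $4$-regular gives $\omega_3(K_4)=\Delta(L^2(K_4))=6>4$, so $\ind(\omega,K_4)=3$, one of the extremal cases.

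The main case is $\omega(G)=\Delta(G)$. When $\omega=\Delta\ge 4$, I would use the $K_\omega$ subgraph: since $\Delta(K_\omega)=\omega-1\ne\omega$, the graph $G$ cannot equal $K_\omega$, so connectedness forces some clique vertex $u$ to have an outside neighbour; as $\Delta=\omega$ this means $\deg_G(u)=\omega$. Pairing $u$ with any other clique vertex $v$ (so $\deg_G(v)\ge\omega-1$) yields an edge of degree sum $\ge 2\omega-3$ in $G$, hence $\omega_2(G)\ge 2\omega-3>\omega$ for $\omega\ge 4$, so $\ind\le 2$. When $\omega=\Delta=3$, the dichotomy is cleaner: if two degree-$3$ vertices are adjacent, then $\omega_2(G)=\Delta_1(G)\ge 4>3$, so $\ind\le 2$; otherwise the degree-$3$ vertices form an independent set, every edge has degree sum $\le 5$, so $\omega_2(G)\le 3=\omega$ and $\ind\ge 3$. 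To show $\ind$ is exactly $3$ in this remaining sub-case, pick any triangle in $G$ (existing because $\omega=3$); not all three of its vertices can have degree $2$ (else $G=K_3$ is a cycle, contradicting prolificity), so some vertex $u$ of the triangle has degree $3$. The two triangle-edges at $u$ become adjacent vertices of $L(G)$ each of degree $3$, which forces $\Delta_2(G)\ge 4$ and hence $\omega_3(G)>\omega(G)$.

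The step I expect to require the most care is the characterisation in the $\omega=\Delta=3$ case: one has to argue both directions (adjacent degree-$3$ pair $\Longrightarrow$ $\ind\le 2$, and independence of degree-$3$ vertices $\Longrightarrow$ $\ind=3$) and exclude degenerate triangle configurations using prolificity. Once all four sub-cases ($\omega<\Delta$; $\omega=\Delta+1$; $\omega=\Delta\ge 4$; $\omega=\Delta=3$) are treated, the theorem follows by assembling them, with equality $\ind(\omega,G)=3$ occurring precisely when $G=K_4$ or when $\omega(G)=\Delta(G)=3$ with the degree-$3$ vertices forming an independent set, yielding $k(\omega,\mathcal{F})=3$.
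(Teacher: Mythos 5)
Your proof is correct and follows essentially the same route as the paper: reduce everything to maximum degrees via $\omega_{k+1}(G)=\Delta_k(G)$ and case-split on how $\omega$ compares with $\Delta$; your triangle argument in the extremal $\omega=\Delta=3$ case even supplies the detail that the paper only asserts. One small point of attribution: the fact that $\omega=\Delta+1$ forces $G=K_{\Delta+1}$ is not Brooks' theorem (which concerns the chromatic number) but the elementary observation that every vertex of a $(\Delta+1)$-clique already has degree $\Delta$ inside the clique, hence has no outside neighbours, so connectedness gives $G=K_{\Delta+1}$.
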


\begin{proof}

We consider the following cases:
\begin{enumerate}
 \item{If  $\omega(G) = 2$  ($G$ triangle free) then, since $G$ is prolific, $\Delta \geq 3$  hence $\omega_1 \geq 3 > \omega$.}
\item{If $\omega(G)\geq 5$ then there are adjacent vertices $u$ and $v$ in the clique whose degree is at least $\omega-1$.  The edge $uv$ forms a vertex of degree $2\omega -4$ in $L(G)$ hence $\omega_2 \geq 2\omega - 4 > \omega$ for $\omega \geq 5$.}  
\item{ If $\omega(G) = 4$ then there are two adjacent vertices of degree at least three.  If $G = K_4$  then $\omega(K_4) = 4$ but $\omega_1(K_4)  =\omega (K_{2,2,2}) = 3$  and $L(K_4)$ is 4-regular.  Hence $\omega_2 = 4$ and $L^2(K_4)$ is 6-regular forcing $\omega_3  =6 > \omega$.

If $G$ contains at least 5 vertices then one of the vertices of the 4-clique, say  $v$, is adjacent to a further vertex $w$  not in the clique.  Hence, in the 4-clique there is a vertex  $v$ of degree at least 4 adjacent to a vertex $u$ of degree at least 3.  Therefore $\Delta_1 \geq 5$  and $\omega_2   = \Delta_1 \geq 5 > \omega$.}

 \item{If $\omega(G) = 3$ then
\begin{itemize}
\item{if  $\omega <  \Delta$ we are done since $\omega_1 = \Delta > \omega$.}
\item{if $\omega= \Delta+1$,  then $\Delta = 2$  but $G$ is prolific and $\Delta \geq 3$ and this is impossible.}
\item{if $\omega = \Delta = 3$, then, if two vertices of degree 3  in $G$ are adjacent, $\Delta_1 = 4$ and $\omega_2  = \Delta_1  = 4 > \Delta$. 

If on the other hand, no two vertices of degree 3 are adjacent, then observe that for all  prolific graphs with $\omega =\Delta= 3$ and with no two vertices of degree 3  being adjacent we must have $\Delta_1 = 3$,  $\Delta_2 = 4$ and $\omega_3 =  \Delta_2  = 4 > \Delta_1 = \Delta= \omega = 3$. }
\end{itemize}}
\end{enumerate}
\end{proof}

\subsection{The vertex connectivity $\kappa(G)$ and the edge connectivity $\lambda(G)$} 
 
We will consider these two parameters together because they are very much related.  We use a number of known results which we now list in one main theorem.

\begin{theorem} \label{connectivity}
	For a graph $G$:
	\begin{enumerate}
		\item{\cite{whitney1931theorem} $\kappa(G) \leq \lambda(G) \leq \delta(G)$.}
		\item{ \cite{SHAO20183441} $\lambda_1(G) \geq 2\lambda(G) - 2$. }
		\item{ \cite{SHAO20183441} $\kappa_2(G) \geq 2\kappa(G)-2$. }
		\item{ \cite{chartrand1969connectivity} if $\lambda (G) \geq 2$ then $\kappa_1(G) \geq \lambda(G)$. }
		\item{ \cite{KNOR2003255} if $\delta(G) \geq 3$, then $\kappa_2(G) \geq \delta(G) -1$.}
		\item{\cite{zamtudor}
			\begin{itemize}
				\item{if $\delta_1(G) \leq \lambda(G) \left \lceil \frac{\lambda(G)+1}{2} \right \rceil$, then $\lambda_1(G) \geq \delta_1(G)$.}
				\item{if $\delta_1(G) \geq \lambda(G) \left \lceil \frac{\lambda(G)+1}{2} \right \rceil$, then $\delta_1(G) \geq \lambda_1(G) \geq \lambda(G) \left \lceil \frac{\lambda(G)+1}{2} \right \rceil$.}
		\end{itemize}}
	\end{enumerate}
\end{theorem}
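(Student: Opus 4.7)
The six items in Theorem~\ref{connectivity} constitute a compilation of results already established in the literature, each accompanied by a citation, so the plan is essentially to invoke the cited references rather than reprove them from scratch. Nonetheless, I would briefly indicate the mechanism behind each inequality so that the reader understands why it holds and can trust its use in the subsequent theorems on $\kappa$ and $\lambda$.

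For item (1), the Whitney chain $\kappa(G) \leq \lambda(G) \leq \delta(G)$ is standard: isolating any vertex $v$ of minimum degree removes exactly $\delta(G)$ edges, which yields $\lambda(G) \leq \delta(G)$; and, given a minimum edge-cut of size $\lambda(G)$, selecting one endpoint from each cut edge produces a vertex-cut of size at most $\lambda(G)$, giving $\kappa(G) \leq \lambda(G)$. For items (2) and (3), the iteration estimates $\lambda_1(G) \geq 2\lambda(G) - 2$ and $\kappa_2(G) \geq 2\kappa(G) - 2$ are proved in Shao's paper by translating an edge-cut (respectively a vertex-cut) of $L(G)$ or $L^2(G)$ back into an edge/vertex-cut of $G$ of controlled size; the factor of $2$ ultimately reflects the fact that each edge $e = uv$ of $G$ becomes a vertex of $L(G)$ whose degree $\deg(u) + \deg(v) - 2$ grows roughly like twice a degree in $G$.

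For item (4), the bound $\kappa_1(G) \geq \lambda(G)$ under the assumption $\lambda(G) \geq 2$ is the original Chartrand--Stewart argument: one shows that a vertex-cut in $L(G)$ can be refined into an edge-cut in $G$ of no larger size, so $\kappa_1(G) \geq \lambda(G)$ whenever the line graph is well-behaved enough, which is ensured by $\lambda(G) \geq 2$. Item (5) uses the assumption $\delta(G) \geq 3$ to ensure sufficient structural robustness in $L^2(G)$; the proof in Knor's paper proceeds by a careful case analysis of small vertex-cuts in $L^2(G)$ and how they pull back to $G$.

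Item (6) is the most delicate and is the step I would expect to be the main obstacle in a self-contained development. The dichotomy, pivoting on whether $\delta_1(G)$ lies below or above the threshold $\lambda(G)\lceil (\lambda(G)+1)/2 \rceil$, emerges from an optimization over edge-switching configurations realising a minimum edge-cut of $L(G)$: below the threshold, $L(G)$ still admits a cut as small as $\delta_1(G)$ while matching it, whereas above the threshold the structural constraint forces $\lambda_1(G)$ to jump up to $\lambda(G)\lceil (\lambda(G)+1)/2 \rceil$. Since this argument is carried out in full in the cited reference, the cleanest route here is simply to quote it; the remaining items I would reproduce in a line or two each as above.
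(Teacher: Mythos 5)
Your proposal is correct and matches the paper exactly: the paper states Theorem \ref{connectivity} purely as a compilation of known results from the cited references and offers no proof of its own, which is precisely the route you take. The brief heuristic sketches you add for each item are accurate enough as motivation and do not change the substance.
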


\subsubsection{Minimum degree $\delta\leq 2$:  both connectivities not universal}
 

We simply observe that in the case where $\delta(G) \in \{1,2\}$, we know that the parameter $\delta$ is not universal, so since $\kappa(G) \leq \lambda(G) \leq \delta(G)$, and considering the examples given for the non-universality of $\delta$ in which $\kappa(G) = \lambda(G) = \delta(G)$, we infer that both $\kappa(G)$ and $\lambda(G)$ are non-universal for families of graph in which $\delta \in \{1,2\}$.

\subsubsection{Minimum degree $\delta \geq 3$:   $k(\kappa,\mathcal F)=2$ and $k(\lambda,\mathcal F)=1$} 

\begin{thmx}
	Let $G$ be a prolific graph  with $\delta \geq 3$.  Then $\lambda_1 > \lambda$.
\end{thmx}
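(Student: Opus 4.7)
The plan is to split on $\lambda := \lambda(G)$ and apply different parts of Theorem \ref{connectivity}, handling the degenerate case $\lambda=1$ by a short direct edge-cut computation. Since $\lambda\leq\delta$ and $\delta\geq 3$, there are only three regimes to consider: $\lambda\geq 3$, $\lambda=2$, and $\lambda=1$.

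For $\lambda\geq 3$, I would invoke part (2) of Theorem \ref{connectivity}, which gives $\lambda_1\geq 2\lambda-2$; because $\lambda\geq 3$ the inequality $2\lambda-2>\lambda$ is immediate, so this case is routine. For $\lambda=2$ the bound $\lambda_1\geq 2\lambda-2=2$ is not strict, so instead I would use part (6): here $\lambda\lceil(\lambda+1)/2\rceil=4$, and the hypothesis $\delta\geq 3$ together with the basic identity $\deg_{L(G)}(xy)=\deg(x)+\deg(y)-2$ yields $\delta_1\geq 2\delta-2\geq 4$. Either sub-case of part (6) then forces $\lambda_1\geq 4>2=\lambda$.

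The only genuinely new work is the case $\lambda=1$, which I expect to be the main obstacle, since part (2) collapses to a trivial inequality and the threshold in part (6) becomes $1$. My plan here is a direct argument showing $\lambda_1\geq 2$. Take any nontrivial edge cut $[S,\bar S]$ of $L(G)$, regard $S$ and $\bar S$ as a partition of $E(G)$, and for each vertex $w\in V(G)$ let $a_w,b_w$ count the edges at $w$ lying in $S$ and $\bar S$ respectively, so $a_w+b_w=\deg(w)$. Because distinct edges of $G$ share at most one vertex, the size of the cut equals $\sum_{w\in V(G)} a_w b_w$. Connectedness of $G$ together with $S,\bar S\neq\emptyset$ forces at least one \emph{mixed} vertex (one with $a_w,b_w\geq 1$); otherwise $G$ would decompose into two edge-disjoint subgraphs on disjoint vertex sets. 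At such a mixed vertex, $a_w+b_w=\deg(w)\geq 3$ forces $a_w b_w\geq 2$, so every nontrivial edge cut of $L(G)$ has size at least two, giving $\lambda_1\geq 2>1=\lambda$ and completing the proof.
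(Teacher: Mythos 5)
Your proof is correct, and for $\lambda\geq 3$ and $\lambda=2$ it is exactly the paper's argument (Theorem 4.13 parts 2 and 6 respectively, plus $\delta_1\geq 2\delta-2\geq 4$). The only divergence is the case $\lambda=1$, where you both prove $\lambda_1\geq 2$ but by different means. The paper argues locally: a bridge $uv$ of $L(G)$ would come from two edges $xy$, $xz$ of $G$ sharing the vertex $x$; since $\deg(x)\geq 3$ there is a third edge $xw$, whose corresponding vertex of $L(G)$ is adjacent to both $u$ and $v$, so $uv$ lies on a triangle and cannot be a bridge. Your argument is global: you write an arbitrary nontrivial edge cut of $L(G)$ as $\sum_w a_w b_w$ over vertices $w$ of $G$, observe that connectedness forces a mixed vertex, and use $a_w+b_w=\deg(w)\geq 3$ to get $a_wb_w\geq 2$. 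Both are sound; the paper's is shorter and more elementary, while yours is slightly stronger in that the same computation immediately yields $\lambda_1\geq\delta-1$ for any $\delta$ (a mixed vertex of degree $\delta$ gives $a_wb_w\geq\delta-1$), which recovers a known bound rather than just the $\lambda_1\geq 2$ needed here. One small point of care in your version: you should make explicit (as you essentially do) that the absence of a mixed vertex would split $E(G)$ into two nonempty classes with no vertex in common, contradicting connectedness of $G$ — that step is the crux and is correct as you state it.
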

\begin{proof}
	We consider the following cases for $\lambda(G)$.
	\begin{enumerate}
		\item{If $\lambda \geq 3$ then by Theorem \ref{connectivity} part 2, $\lambda_1  \geq 2\lambda - 2 >\lambda$  and we are done by one iteration.}
		\item{If $\lambda = 2$  then   $\lambda \left \lceil \frac{\lambda+1}{2} \right \rceil = 4$.  Since  $\delta \geq  3$,  it follows that $\delta_1 \geq 2\delta -2$ hence in both cases of Theorem \ref{connectivity} part 6, we have $\delta_1 \geq 4$ and we are done by one iteration.}
		\item{If $\lambda(G) = 1$  and   $\delta \geq 3$  we show  that  $\lambda_1  \geq 2$ and we are done by one iteration.  Consider $L(G)$ and suppose on the contrary  $\lambda_1  = 1$ implying $L(G)$  has a bridge.  Let $e = uv$ be the bridge in $L(G)$  so that if we remove $e$ then $u$ and $v$ are disconnected.  It follows then that  $u$ and $v$ were incident edges in $G$,  say  $u=xy$ and $v=xz$, forming the bridge $uv$ in $L(G)$.  But since $\delta(G) \geq 3$ there must be another edge say $xw$ incident with the vertex $x$.   
			
			Now, in $L(G)$, the vertex b representing the edge $xw$ in $G$  is adjacent to both $u$ and $v$ so deleting the edge $uv$ does not disconnect $u$ from $v$ as they can reach each other via $b$,  a contradiction.}
	\end{enumerate}
\end{proof}

\begin{thmx} \label{thmconnect}
 For every prolific graph $G$ with $\delta(G)  \geq 3$,  we have $\ind(\kappa,G) \leq2$.  Furthermore, for every $\kappa \geq 1$ and  $\delta \geq 3$, there are graphs with $\ind(\kappa,G) = 2$. Hence  $k(\kappa, \mathcal F) = 2$.
\end{thmx}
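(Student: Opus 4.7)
The plan is to establish the upper bound $\ind(\kappa,G)\leq 2$ by a case analysis on $\kappa(G)$ that leverages the omnibus Theorem \ref{connectivity} together with the preceding theorem $\lambda_1(G)>\lambda(G)$ (valid for $\delta\geq 3$), and then to exhibit, for every $\kappa\geq 1$ and every admissible $\delta\geq 3$, an extremal graph achieving $\ind(\kappa,G)=2$.

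For the upper bound, write $\kappa:=\kappa(G)$ and split into three sub-cases. If $\kappa\geq 3$, Theorem \ref{connectivity}(3) gives $\kappa_2\geq 2\kappa-2>\kappa$ immediately. If $\kappa=1$, Theorem \ref{connectivity}(5) (which requires $\delta\geq 3$) yields $\kappa_2\geq\delta-1\geq 2>1=\kappa$. The delicate sub-case is $\kappa=2$: here both Theorem \ref{connectivity}(3) and Theorem \ref{connectivity}(5) only ensure $\kappa_2\geq 2$, which is not strict. To get around this, use Whitney's inequality $\lambda\geq\kappa=2$ and invoke the immediately preceding theorem to conclude $\lambda_1(G)\geq 3$. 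Then apply Theorem \ref{connectivity}(4) to the graph $L(G)$, whose edge-connectivity $\lambda(L(G))=\lambda_1(G)$ is at least $2$; this gives
\[
\kappa_2(G)=\kappa_1(L(G))\;\geq\;\lambda(L(G))=\lambda_1(G)\;\geq\;3\;>\;2=\kappa(G).
\]

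For sharpness, the plan is to exhibit, for each $\kappa\geq 1$, a prolific graph $G$ with $\delta(G)\geq 3$ satisfying both $\kappa_1(G)\leq\kappa(G)$ and $\kappa_2(G)>\kappa(G)$. A natural candidate family is built from two sufficiently large copies of a dense $\kappa$-connected graph (for instance $K_r$ with $r\geq\kappa+3$) joined by a "neck" of $\kappa$ edges, either concentrated on one bridge (for $\kappa=1$) or spread as a matching between $\kappa$ boundary vertices on each side. The neck produces a cutset of size $\kappa$ in $L(G)$ whose corresponding set of edge-vertices disconnects $L(G)$, forcing $\kappa_1(G)\leq\kappa$, while $\kappa_2(G)>\kappa$ is automatic from the upper bound argument above. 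Only $\kappa_1(G)\leq\kappa$ needs verification by hand, and this reduces to identifying the explicit cut in $L(G)$.

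The main obstacle is precisely the case $\kappa=2$ of the upper bound: neither Theorem \ref{connectivity}(3) nor Theorem \ref{connectivity}(5) is strong enough on its own, and the argument must pass through the edge-connectivity tool $\lambda_1>\lambda$ combined with Theorem \ref{connectivity}(4). A secondary difficulty, reflected in the asterisk next to Theorem L in the summary table, is giving a clean \emph{characterization} of the extremal graphs with $\ind(\kappa,G)=2$; existence of such graphs for every $\kappa\geq 1$ is tractable via the above construction, but the full extremal classification remains open.
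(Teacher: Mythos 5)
Your proof is correct and follows essentially the same route as the paper: the same three-way case split on $\kappa$ using Theorem \ref{connectivity} parts (3) and (5), with the $\kappa=2$ case resolved exactly as in the paper by combining Whitney's inequality, the strict growth of $\lambda$ under one iteration (for $\delta\geq 3$), and part (4) applied to $L(G)$. For sharpness the paper simply cites the Chartrand--Harary graphs with $\kappa=\lambda$ and $\delta\geq 3$ (which satisfy $\kappa_1=\lambda=\kappa$), whereas you build the same kind of example explicitly; both are valid.
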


\begin{proof}
We consider the following cases:
\begin{enumerate}
\item{If $\kappa \geq 3$  then  by  Theorem \ref{connectivity} part 3, $\kappa_2\geq 2\kappa - 2 > \kappa(G)$  and we are done by two iterations which are necessary (since in fact we can have $\kappa_1 = \lambda(G)$  and we can construct graphs with $\kappa(G) = \lambda(G) = t$  for any value of t).} 
\item{if $\kappa = 2$  then $\lambda \geq 2$ by  Theorem \ref{connectivity} part 1.  However, if $\lambda(G) = 2$ and  $\delta(G)  \geq 3$, we have shown above that $\lambda_1  \geq 4$.  But then  applying Theorem \ref{connectivity} part 4, $\kappa_2  \geq \lambda_1 \geq 4$ and we are done by two iterations. }  
\item{if $\kappa(G) = 1$ then we apply Theorem \ref{connectivity} part 5 so that $\kappa_2 \geq \delta - 1 \geq 2$.  And we are done in two iterations.}

 \end{enumerate}

Chartrand and Harary \cite{chartrand1968graphs} constructed, for every triple of positive integers $1 \leq \kappa \leq \lambda \leq \delta$, graphs with $\kappa(G) = \kappa$, $\lambda(G) = \lambda$ and   minimum degree $\delta(G)$, and these graphs also satisfy  $\kappa_1 = \lambda$ \cite{KNOR2003255}. 

So the Chartrand-Harary graphs for the triples $\kappa = \lambda$ and every $\delta \geq  3$ satisfy  $\kappa_2 > \kappa_1  = \lambda = \kappa$,  completing the proof of the theorem.   
 \end{proof}

\subsection{The independence number $\alpha(G)$}

Recall Theorem \ref{matching} which states that  a connected $K_{1,3}$-free graph on $n$ vertices  has a matching of order $\lfloor \frac{n}{2} \rfloor$.
 
We shall use the following chain of equalities \[\alpha_{k+2}  =  \mu_{k+1} =  \left \lfloor \frac{n_{k+1}}{2 }  \right  \rfloor = \left \lfloor \frac{ e_k}{2 } \right  \rfloor.\]

 \begin{lemma} \label{indep}
Let  $G$ be  a graph on $n$ vertices and $m$ edges with minimum degree $\delta$.  Then $\left \lfloor \frac{ m}{\delta } \right \rfloor \geq \alpha(G)$, with equality if and only if $G$ is  bipartite with one part a maximum independent set with all vertices of degree $\delta$. 

 \end{lemma}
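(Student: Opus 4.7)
The plan is a straightforward counting argument on edges incident with a maximum independent set. Let $S \subseteq V(G)$ be a maximum independent set, so $|S| = \alpha(G)$. Since every vertex of $G$ has degree at least $\delta$, we have
\[
\sum_{v \in S} \deg(v) \;\geq\; \delta\, \alpha(G).
\]
Because $S$ is independent, every edge with an endpoint in $S$ has its other endpoint in $V \setminus S$; consequently the sum $\sum_{v \in S} \deg(v)$ counts each edge incident to $S$ exactly once and never counts an edge twice. Thus $\sum_{v \in S} \deg(v) \leq m$, which yields $m \geq \delta\,\alpha(G)$, i.e. $m/\delta \geq \alpha(G)$, and since $\alpha(G)$ is an integer we conclude $\lfloor m/\delta \rfloor \geq \alpha(G)$.

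For the equality characterisation, I would trace where the two inequalities above are tight. The bound $\sum_{v \in S} \deg(v) \geq \delta\,\alpha(G)$ is tight iff every vertex of $S$ has degree exactly $\delta$. The bound $\sum_{v \in S} \deg(v) \leq m$ is tight iff every edge has at least one endpoint in $S$, equivalently $V \setminus S$ is independent, which together with the independence of $S$ means $G$ is bipartite with bipartition $(S, V \setminus S)$. So $m = \delta \alpha(G)$ holds iff $G$ is bipartite with one part equal to a maximum independent set all of whose vertices have degree $\delta$. Conversely, in such a bipartite graph all edges are accounted for by summing degrees over $S$, so $m = \sum_{v \in S} \delta = \delta \alpha(G)$, and $\lfloor m/\delta \rfloor = \alpha(G)$.

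The one subtle point, which I would flag rather than belabour, is that strictly speaking $\lfloor m/\delta \rfloor = \alpha(G)$ is a weaker condition than $m = \delta\,\alpha(G)$ (for example $C_5$ has $\lfloor 5/2 \rfloor = 2 = \alpha$ but is not bipartite). The natural reading of "equality" here is equality in the underlying inequality $m \geq \delta\,\alpha(G)$ that drives the proof, under which reading the bipartite characterisation is both necessary and sufficient. I would state this interpretation explicitly at the start of the proof so that the characterisation is unambiguous.
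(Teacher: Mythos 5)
Your proof is correct and is essentially the same as the paper's: the paper likewise takes a maximum independent set $A$, sets $B = V \setminus A$, and uses the chain $\delta|A| \leq e(A,B) \leq e(G)$, with equality forcing every vertex of $A$ to have degree $\delta$ and $B$ to be independent. Your caveat that the floor makes the stated equality condition literally weaker than $m = \delta\,\alpha(G)$ (e.g.\ $C_5$) is a fair point the paper glosses over; its proof, like yours, implicitly reads ``equality'' as $m = \delta\,\alpha(G)$.
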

\begin{proof}

Suppose $A$ is a maximum independent set.  Let $B = V \backslash A$.  Let $e(A,B)$ be  the number of edges  from $A$ to $B$.  Then $\delta|A|\leq e(A,B)  \leq e(G)$, and for equality  we need both all vertices in $A$ have degree $\delta$, and also  $B$ to  be  an independent set so that $G$ is bipartite.

 \end{proof}

\begin{thmx} \label{thmindep}

Let $G$ be a prolific  graph on $n \geq 4$ vertices of average degree $d$, minimum degree $\delta$ and independence number $\alpha$.  Then
\begin{enumerate}
\item{if $d \geq 4$ then  $\ind(\alpha,G) \leq 2$.}
\item{if  $\delta \geq 3$ then  $\ind(\alpha,G) \leq 2$.}
\item{if $d \geq 3$  then $\ind(\alpha,G) \leq 3$.}
\item{if $\delta =2$ then $\ind(\alpha,G) \leq 3$.}
\end{enumerate}
\end{thmx}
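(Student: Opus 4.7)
The plan is to exploit the identity $\alpha_{k+2}(G) = \mu_{k+1}(G) = \lfloor n_{k+1}(G)/2 \rfloor = \lfloor e_k(G)/2 \rfloor$, which follows from Sumner's theorem (Corollary~\ref{ind_match}) since every line graph is $K_{1,3}$-free. In particular $\alpha_2(G) = \lfloor e(G)/2 \rfloor$ and $\alpha_3(G) = \lfloor e_1(G)/2 \rfloor$, so the four bullets reduce to suitable lower bounds on $e(G)$ or $e_1(G)$ paired with upper bounds on $\alpha(G)$. For upper bounds I would use Lemma~\ref{indep}, which gives $\alpha \leq \lfloor m/\delta \rfloor$, and the trivial bound $\alpha \leq n-1$, valid because $G$ is a connected prolific graph.

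For item (1), $d \geq 4$ immediately gives $e \geq 2n$, so $\alpha_2 = \lfloor e/2 \rfloor \geq n > n-1 \geq \alpha$. For item (2), $\delta \geq 3$ together with Lemma~\ref{indep} yields $\alpha \leq \lfloor m/3 \rfloor$, and since prolific graphs with $\delta \geq 3$ and $n \geq 4$ satisfy $m \geq 3n/2 \geq 6$, a routine floor comparison shows $\lfloor m/2 \rfloor > \lfloor m/3 \rfloor$, hence $\alpha_2 > \alpha$. For item (3), applying Theorem~\ref{avgdeg} gives $d_1 \geq 2(d-1) \geq 4$, whence $e_1 = n_1 d_1/2 \geq 2e \geq 3n$, and therefore $\alpha_3 \geq \lfloor 3n/2 \rfloor > n - 1 \geq \alpha$.

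Item (4) is the delicate case. Under $\delta = 2$, Lemma~\ref{indep} only provides $\alpha \leq \lfloor m/2 \rfloor$, which matches $\alpha_2$ exactly and leaves no room at level~2; one must pass to level~3 and secure $e_1 \geq e + 2$, so that $\alpha_3 = \lfloor e_1/2 \rfloor \geq \lfloor e/2 \rfloor + 1 > \alpha$. Here I would invoke Theorem~A: the only prolific graphs with $e_1(G) \leq e(G) + 1$ are the subdivisions of $K_{1,3}$, the double star $S_{2,2}$ and its subdivisions, and the family $CP(k, n-k)$. Every graph on this extremal list contains a pendant vertex, so each is disqualified by the hypothesis $\delta = 2$; consequently $e_1 \geq e + 2$, and item (4) follows.

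The main obstacle is item (4): the bound from Lemma~\ref{indep} becomes tight when $\delta = 2$, so no level-2 argument can suffice, and the proof relies on the structural observation that every configuration identified by Theorem~A as obstructing $e_1 \geq e + 2$ has minimum degree~$1$ and therefore cannot occur once $\delta = 2$ is assumed. The remaining cases (1)--(3) are comparatively mechanical once the key identity $\alpha_{k+2} = \lfloor e_k/2 \rfloor$ is in hand.
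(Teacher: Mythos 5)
Your proof is correct, and for items (1)--(3) it is essentially identical to the paper's: the same reduction $\alpha_{k+2}=\mu_{k+1}=\lfloor e_k/2\rfloor$ via Sumner's theorem, the same use of $\alpha\le n-1$ for (1) and (3), and the same appeal to Lemma~\ref{indep} for (2). The only genuine divergence is in item (4). The paper gets the needed edge growth quantitatively: from $\delta=2$ and $\Delta\ge 3$ it deduces $e(G)\ge n+1$ and then invokes Lemma~\ref{prolific31}(2) (together, implicitly, with Lemma~3.9(2) when $e\ge n+2$) to conclude $e_1\ge e+3$, hence $\alpha_3\ge\lfloor(e+3)/2\rfloor\ge e/2+1>\alpha$. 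You instead argue structurally: by Theorem~A the only prolific graphs with $e_1\le e+1$ are subdivisions of $K_{1,3}$, subdivisions of $S_{2,2}$, and $CP(k,n-k)$, and each of these has a pendant vertex, so $\delta=2$ forces $e_1\ge e+2$, which already beats the bound $\alpha\le\lfloor e/2\rfloor$ from Lemma~\ref{indep}. Both routes are valid; yours buys a cleaner, characterization-based argument that reuses Theorem~A rather than the convexity machinery, at the cost of a slightly weaker growth estimate ($e+2$ versus $e+3$), which is still exactly enough for the floor comparison $\lfloor e_1/2\rfloor\ge\lfloor e/2\rfloor+1>\alpha$.
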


\begin{proof}
$\mbox{ }$\\
\begin{enumerate}
 \item{Suppose $d \geq 4$.  Then  $n_1 = e(G)  \geq 2n$  and by Theorem \ref{matching} $\alpha_2 = \mu_1 \geq n > n-1  \geq \alpha$.}
\item{Suppose $\delta \geq 3$.  Then by Lemma \ref{indep},  $\alpha \leq  \left \lfloor \frac{ e(G)}{\delta } \right \rfloor \leq \frac{e(G) }{3}$   while $\alpha_2 =  \mu_1 = \left \lfloor \frac{e(G)}{2} \right \rfloor   > \left \lfloor \frac{ e(G)}{3 } \right \rfloor \geq \alpha$  since $\delta \geq 3$.}
\item{Suppose  $d \geq 3$,  then $d_1 \geq 2( d(G) - 1)  \geq 4$  and  \[n_1 = e(G)  = \frac{nd}{2}  \geq \frac{3n }{2}.\]

Hence  $e_1   = \frac{d_1n_1}{2 } \geq 3n$, and we get  \[\alpha_3 =  \mu_2  = \left \lfloor \frac{ n_2}{2} \right \rfloor  = \left \lfloor \frac{ e_1}{2} \right \rfloor \geq \left \lfloor \frac{3n}{2} \right \rfloor  >  n-1 \geq \alpha.\] 

Moreover if $\alpha  < \left \lfloor \frac{  3n}{4 } \right \rfloor$, then already,  from $e(G)  \geq \frac{ 3n}{2}$, we get \[\alpha_2  =  \mu_1  = \left \lfloor \frac{ e(G)}{2 } \right \rfloor \geq  \left \lfloor \frac{ 3n }{4} \right \rfloor > \alpha\] and two iterations suffice.}      
\item{since $\delta= 2$ and $G$ is prolific,  it follows that $\Delta \geq 3$.  Hence  $e(G) \geq  n+1$  and by Lemma \ref{prolific31} part 2 we have $e_1  \geq e(G) +3$.     

 Now since $\delta= 2$, by Lemma \ref{indep} we have $\alpha(G) \leq \frac{e(G)}{2}$  and  $n_2 =  e_1 \geq  e(G) +3$  hence \[\alpha_3 = \mu_2  = \left \lfloor \frac{ e_1}{2} \right \rfloor \geq \left \lfloor \frac{ ( e(G) +3)}{2 } \right \rfloor \geq  \frac{e(G)}{2} +1 \geq \alpha(G) +1 > \alpha.\]}
\end{enumerate}
\end{proof}

\subsection{The domination number $\gamma(G)$}

\begin{thmx} \label{thmdomin}
Let $G$ be a prolific graph.
\begin{enumerate}
\item{If $\delta \geq 4$, then $\ind(\alpha,G) \leq 2$.}
\item{If $\delta=3$, then $\ind(\alpha,G) \leq 3$.}
\item{If  $d \geq 3$, then $\ind(\alpha,G) \leq 3$.}
\end{enumerate}
\end{thmx}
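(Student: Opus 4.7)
The plan is to deduce each of the three claimed bounds as immediate corollaries of the just-proven Theorem M on $\ind(\alpha,G)$, since every hypothesis in the present statement is a (strict) strengthening of, or identical to, a hypothesis already handled there.

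For part 1, the hypothesis $\delta(G) \geq 4$ implies $\delta(G) \geq 3$, and Theorem M part 2 gives $\ind(\alpha,G) \leq 2$ whenever $\delta(G) \geq 3$ (established there via Lemma \ref{indep} and the chain $\alpha_2 = \mu_1 = \lfloor e(G)/2\rfloor$). So part 1 requires no new argument. For part 2, the hypothesis $\delta(G) = 3$ again gives $\delta(G) \geq 3$, so Theorem M part 2 yields the even stronger conclusion $\ind(\alpha,G) \leq 2$, which implies the claimed $\ind(\alpha,G) \leq 3$. For part 3, the hypothesis $d(G) \geq 3$ is exactly the hypothesis of Theorem M part 3, which directly asserts $\ind(\alpha,G) \leq 3$.

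Consequently, read literally, the statement carries no combinatorial content beyond Theorem M: no edge-growth lemma or matching estimate needs to be reinvoked, and there is no real obstacle, only the bookkeeping verification that each hypothesis is at least as strong as the corresponding one in Theorem M.

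I should flag, however, that the placement of this statement under the section heading \emph{The domination number $\gamma(G)$}, and the corresponding entry in the summary table, strongly suggest that each occurrence of ``$\ind(\alpha,G)$'' in the statement is a typographical slip for ``$\ind(\gamma,G)$''. If the intended object is $\gamma$, then the proof would look very different and would be where the real work lies: one would invoke the identity $\gamma_{k+1}(G) = i_{k+1}(G) = \mu^*_k(G)$ from item 15 of the unboundedness theorem (so that controlling $\gamma$ two or three iterations out amounts to controlling $\mu^*$ in $L^k(G)$, and since line graphs are $K_{1,3}$-free we even have $\mu^* \geq \mu/2 = \lfloor n/2\rfloor/2$ by Sumner's theorem), combine this with the Arnautov--Payan-type upper bound $\gamma(G) \le n(1+\ln(\delta+1))/(\delta+1)$ to control the starting value $\gamma(G)$, and then use the edge-growth estimates of Lemmas \ref{prolific31}--\ref{prolific32} to ensure $n_k$ grows fast enough to make $\mu^*_k > \gamma(G)$ after the prescribed number of iterations; the hardest of the three would be the borderline case $\delta = 3$, where two iterations are not enough and one must invest a third iteration to guarantee sufficient edge growth.
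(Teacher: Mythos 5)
You are right that ``$\ind(\alpha,G)$'' in this statement is a slip for ``$\ind(\gamma,G)$'': the paper's proof of this theorem is entirely about the domination number, so your primary argument --- that the literal $\alpha$-reading is an empty corollary of Theorem M --- is correct but proves the wrong thing. The substance of your proposal is therefore the sketch for $\gamma$, and its skeleton does match the paper's argument: the paper uses $\gamma_{k+1}=i_{k+1}=\mu^*_k$, the inequality $\mu^*\geq\mu/2$, and Sumner's theorem (Theorem \ref{matching}) to get $\gamma_{k+1}\geq \lfloor n_k/2\rfloor /2\geq\lfloor e_{k-1}/4\rfloor$, and then beats this against an upper bound on $\gamma(G)$ in terms of $n$.

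However, two concrete steps in your sketch would fail as proposed. First, the Arnautov--Payan bound $\gamma(G)\leq n(1+\ln(\delta+1))/(\delta+1)$ does not close part 1 at the boundary case $\delta=4$: there it gives $\gamma(G)\leq n(1+\ln 5)/5\approx 0.522\,n$, while the best two-iteration lower bound obtainable from $e(G)\geq 2n$ is $\gamma_2=\mu^*_1\geq\lfloor e(G)/4\rfloor\geq\lfloor n/2\rfloor$, which is not larger; since the claim for $\delta\geq 4$ is $\ind(\gamma,G)\leq 2$, you cannot rescue this by spending a third iteration. The paper instead invokes the sharper bound $\gamma(G)\leq 0.3637\,n$ for $\delta\geq 4$ (and $\gamma(G)\leq 3n/8$ for $\delta=3$) from the table in \cite{bujtas2016improved}. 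Second, part 3 assumes only $d\geq 3$, so $\delta$ may equal $1$ and any minimum-degree-based upper bound on $\gamma$ is useless there; the paper uses Ore's bound $\gamma(G)\leq n/2$, and obtains the needed edge growth not from Lemmas \ref{prolific31}--\ref{prolific32} but from Theorem \ref{avgdeg}: $d_1\geq 2(d-1)\geq 4$ gives $n_2=e_1\geq 3n$, hence $\gamma_3=\mu^*_2\geq\mu_2/2\geq 3n/4>n/2\geq\gamma(G)$. With these two repairs (the sharper $\delta\geq 4$ bound and the Ore-plus-average-degree treatment of part 3), your outline becomes essentially the paper's proof.
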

\begin{proof}
$\mbox{ }$\\

Consider $\gamma(G)$ for a graph $G$ on $n$ vertices and its upper bound in terms of $n$.  In \cite{bujtas2016improved} the best upper bounds are summarized in a table according to the value of $\delta$, citing various theorems related to this upper bound.
\begin{enumerate}
\item{For a graph with $\delta \geq 4$,  $\gamma(G)  \leq 0.3637n$.

Now we know that \[\gamma_2=i_2=\mu^*_1 \geq \left \lfloor \frac{n_1}{4} \right \rfloor \geq \left \lfloor \frac{e(G)}{4} \right \rfloor \geq \left \lfloor \frac{2n}{4} \right \rfloor =\frac{n}{2}.\]  So for $\delta \geq 4$, two iterations suffice.}
\item{For $\delta = 3$, $\gamma(G) \leq \frac{3n}{8}$.  A similar argument as above shows that
\[\gamma_2 = i_2  = \mu^*_1 \geq \frac{\mu_1 }{2}  \geq\left \lfloor \frac{n_1}{4} \right \rfloor \geq \left \lfloor \frac{e(G)}{4} \right \rfloor \geq \left \lfloor \frac{3n}{8} \right \rfloor\]  and this might not be enough.  But   

\[\gamma_3 = i_3  = \mu^*_2 \geq \frac{\mu_2}{2}  \geq  \left \lfloor \frac{n_2}{4} \right \rfloor \geq \left \lfloor \frac{e_1(G)}{4} \right \rfloor \geq \left \lfloor \frac{n_1d_1}{8} \right \rfloor \geq  \left \lfloor \frac{4(\frac{3n}{2})}{8} \right \rfloor =\left \lfloor \frac{3n}{4} \right \rfloor> \frac{3n}{8} \geq \gamma(G)\]      and 3 iterations suffice.}
\item{Clearly by Ore's theorem, $\gamma(G)  \leq \frac{n}{2}$.  On the other hand, $d_1 \geq 2(d-1)  \geq 4$ and hence we have  $n_1 = e(G)  = \frac{dn}{2} \geq \frac{3n}{2}$.  Therefore $n_2 = e_1 = \frac{d_1n_1}{2} \geq 3n$  and $\mu_2 = \left \lfloor \frac{n_2}{2 }  \right \rfloor \geq \left \lfloor \frac{3n}{2} \right \rfloor$ and therefore \[\gamma_3  = \mu^*_2 \geq \frac{\mu_2}{2} \geq \frac{3n}{4}  >  \gamma.\]  Hence three iterations suffice.}

\end{enumerate}
\end{proof}


\section{Conclusion}

There are, of course, several open questions which one can obtain by considering the unboundedness or otherwise and the index of parameters other than the fifteen which we have identified in this paper, for example, the largest eigenvalue, which is  unbounded (see \cite{largesteigenvalue,lovasz2007eigenvalues} combined with Theorem 2.3 part 3), the spectral gap, or the size of the automorphism group, to mention only a few. We have chosen fifteen parameters which are very basic in graph theory and whose study in the context of iterated line graphs seems quite natural and interesting.  

For the independence number, determining whether the  index $k(\alpha,\mathcal F) < \infty$, $\mathcal F$ being the family of all prolific graphs with $\delta(G) =1$, is an interesting problem.  Also, to get sharp bounds for $k(\alpha,\mathcal F)$,  where $\mathcal F$ is the family of prolific graphs with respectively $d \geq 4$, $d = 3$, $\delta \geq 3$,  $\delta = 2$ are also interesting tasks.

Similar and even harder problems remains open for the domination and independent domination parameters.  As to characterization of extremal graphs realizing $\ind(P,\mathcal F) = k(P,\mathcal F)$, open problems remains for the parameters chromatic number (Theorem \ref{thmchrom}), vertex connectivity (Theorem \ref{thmconnect}),  together with the already mentioned parameters  independence number  (Theorem  \ref{thmindep}), independence domination number,  and domination number (Theorem \ref{thmdomin}).  

\subsection*{Acknowledgements}

We would like to thank the referees whose careful reading of the paper helped us improve it considerably.
 \bibliographystyle{plain}
\bibliography{iterated bib}
\end{document}